   \def\MR#1{}
\author{Yi Xie}
\email{yixie@pku.edu.cn}
\address{Beijing International Center for Mathematical Research, Peking University, Beijing 100871, China}
\DeclareMathOperator{\dimm}{dim}
\DeclareMathOperator{\rankk}{rank}
\DeclareMathOperator{\pt}{pt}
\DeclareMathOperator{\II}{I}
\DeclareMathOperator{\THI}{THI}
\DeclareMathOperator{\AHI}{AHI}
\DeclareMathOperator{\AKh}{AKh}
\DeclareMathOperator{\TKh}{TKh}
\DeclareMathOperator{\muu}{\mu^{orb}}
\DeclareMathOperator{\idd}{id}
\DeclareMathOperator{\Kh}{Kh}
\DeclareMathOperator{\Khr}{Khr}
\DeclareMathOperator{\RI}{RI}
\DeclareMathOperator{\THIo}{THI^{odd}}
\DeclareMathOperator{\SHI}{SHI}
\newtheorem{THE}{Theorem}[section]
\newtheorem{thm-defn}[THE]{Theorem/Definition}
\newtheorem{LE}[THE]{Lemma}
\newtheorem{PR}[THE]{Proposition}
\newtheorem{COR}[THE]{Corollary}
\theoremstyle{definition}
\newtheorem{DEF}[THE]{Definition}
\newtheorem{eg}[THE]{Example}
\theoremstyle{remark}
\newtheorem*{rmk}{Remark}
\numberwithin{equation}{section}
\title{Instantons and Annular Khovanov Homology }
\date{}
\begin{document}
\maketitle


\begin{abstract}
In this paper, we introduce the annular instanton Floer homology which is defined for links in a thickened annulus. It is
an analogue of the annular Khovanov homology.
A spectral sequence whose second page is the annular Khovanov homology and which converges to the annular 
instanton Floer homology is constructed. As an application of this spectral sequence, we prove that the annular Khovanov homology detects
the unlink in the thickened annulus (assuming all the components are null-homologous). Another application is a new proof
of Grigsby and Ni's result that tangle Khovanov homology distinguishes braids from other tangles. 
\end{abstract}

\section{Introduction}

In \cite{Kh-Jones}, Khovanov defined a bi-graded homology group for a link in $S^3$ which categorifies the Jones polynomial of the link.
Since then, the relationship between Khovanov homology and Floer homology theories in different settings has been studied a lot.

The first such result is due to Ozsv\'{a}th and Szab\'{o} \cite{OS-ss}: they constructed a spectral sequence whose
$E_2$-page is the (reduced) Khovanov homology of the mirror image of a link $L$ in $S^3$ and which converges to the (hat version) Heegaard Floer 
homology of the branched double cover of $S^3$ over $L$.

Similar results were proved by Bloom \cite{Bloom} 
in the monopole Floer homology case and Scaduto \cite{Sca} in the framed intanton homology case.
All the above spectral
sequences 
are related to the Floer homology of the branched double cover of $S^3$. In \cite{KM:Kh-unknot}, Kronheimer and
Mrowka constructed a spectral sequence converging to a version of singular instanton Floer homology of a link. 
As applications of their
spectral sequence, it was shown that Khovanov homology detects the unknot \cite{KM:Kh-unknot} and 
 the trefoil \cite{BS}.

In \cite{APS}, Khovanov homology was generalized for (framed) links in 
a $I$-bundle over a compact surface (possibly with boundary).
In particular, their definition works for links in a thickened annulus. 
We call it the \emph{annular Khovanov homology} in this situation. In \cite{Rob}, Roberts 
constructed a spectral sequence whose $E_2$-page is 
the annular Khovanov homology of the mirror of a link and which converges to the (hat) Heegaard Floer homology of  
 certain branched double cover. 
In this paper, we define an analogue of the annular Khovanov homology in the singular instanton Floer homology setting, 
called the \emph{annular instanton Floer homology}. The annular instanton Floer homology is related to the annular Khovanov homology by
a spectral sequence. 
\begin{THE}\label{AKh-AHI*}
Let $L$ be a link in the thickened annulus and $\overline{L}$ be its mirror image.
There is a spectral sequence whose $E_2$-page is the annular Khovanov homology
$\AKh(\overline{L};\mathbb{C})$ and which converges to the annular instanton Floer homology $\AHI(L;\mathbb{C})$.
\end{THE}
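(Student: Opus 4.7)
The plan is to follow the framework of Kronheimer--Mrowka's spectral sequence from Khovanov homology to singular instanton Floer homology, adapted to the annular setting. Let $D$ be a diagram of $L$ drawn on the annulus $A$ with $n$ crossings. For each vertex $v\in\{0,1\}^n$ of the cube of resolutions, let $L_v\subset A\times I$ be the corresponding unoriented resolution; each $L_v$ is an unlink in the thickened annulus consisting of some number $a(v)$ of \emph{trivial} circles (bounding disks in $A\times I$) and some number $b(v)$ of \emph{essential} circles (isotopic to the core of $A$).

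First I would assign to each edge $v\to v'$ of the cube the elementary saddle cobordism in $(A\times I)\times[0,1]$ between $L_v$ and $L_{v'}$, and verify that $\AHI$ is functorial under such cobordisms and behaves well under stacking. Once this is in place, the standard Kronheimer--Mrowka cube construction (choosing compatible null-homotopies of the compositions around each $2$-face, and inductively on higher faces) produces a total differential on $\bigoplus_v \AHI(L_v)$ whose square is zero, together with a filtration by the weight $|v|$. This filtered complex should compute $\AHI(L)$ up to isomorphism, giving a spectral sequence with $E_1$-page $\bigoplus_v \AHI(L_v)$ and $d_1$ equal to the sum of single-saddle cobordism maps.

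The next step is to identify the $E_1$-page with the annular Khovanov cochain complex of $\overline{L}$. This requires computing $\AHI$ on annular unlinks: I expect $\AHI(L_v)\cong V^{\otimes a(v)}\otimes W^{\otimes b(v)}$, where $V$ is the two-dimensional module $\AHI$ assigns to a trivial unknot in $A\times I$ and $W$ is the module assigned to an essential unknot. One would then check that the saddle cobordism maps realize the Asaeda--Przytycki--Sikora Frobenius-like structure: on trivial circles one recovers the usual Khovanov multiplication and comultiplication, while saddles involving essential circles must reproduce the annular rules (for instance, the merge of two essential circles vanishes as a multiplication and contributes through the $A$-grading). Matching these with the Khovanov versus mirror conventions accounts for the passage to $\overline{L}$ in the statement, and then the $E_2$-page is $\AKh(\overline{L};\mathbb{C})$ by definition.

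The main obstacle is the computation of $\AHI$ on the essential unknot and of the saddle maps joining trivial to essential circles: unlike the disk/trivial case, these cannot be reduced to the standard Kronheimer--Mrowka calculation on $S^3$, and require a neck-stretching or excision argument tying $\AHI$ in $A\times I$ to instanton Floer groups of sutured manifolds or of links in $S^3$ augmented by an axis. Establishing that the resulting two-dimensional module $W$ carries exactly the annular Frobenius structure, and that all the higher cube commutativity data required by the Kronheimer--Mrowka cube lemma can be chosen in the annular setting, is where the bulk of the technical work should lie; once these computations are in hand, the identification of $E_2$ and convergence to $\AHI(L;\mathbb{C})$ follow the non-annular template.
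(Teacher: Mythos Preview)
Your overall architecture matches the paper's: Kronheimer--Mrowka's cube of resolutions gives a spectral sequence with $E_1$-page $\bigoplus_v \AHI(L_v)$ converging to $\AHI(L)$, and the work is to identify $(E_1,d_1)$ with the annular Khovanov complex of $\overline{L}$. The tensor decomposition $\AHI(L_v)\cong V^{\otimes a(v)}\otimes W^{\otimes b(v)}$ is indeed obtained via excision (the paper's Proposition~\ref{link-sum} and Proposition~\ref{VW}), and the trivial-circle saddle maps are taken directly from \cite{KM:Kh-unknot}.

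Where your plan diverges from the paper is in the essential-circle computations, which you leave to ``neck-stretching or excision tying $\AHI$ to sutured invariants or to links in $S^3$ with an axis.'' The paper does not proceed this way. Instead it (i) identifies $\AHI(\mathcal{K}_m)$ by direct enumeration of flat $SU(2)$ representations, giving explicit generators $\rho_\pm$; (ii) shows the torus-sum operator $\sigma$ vanishes on $\AHI(\mathcal{K}_m)$ (Proposition~\ref{sigma-action}), which, combined with capping by disks $D^\pm$ and punctured tori $\Sigma^\pm$, determines the trivial/essential merge and split maps (Proposition~\ref{Sb}); (iii) for the two-essential-to-trivial saddle, counts a one-point moduli space after capping and then uses the sphere operator $\muu(R)$ to pin down the relative sign $\lambda_1=-\lambda_2$ (Proposition~\ref{Sc}); and (iv) shows $\lambda_3=\lambda$ by a finger-move isotopy argument reducing to $\II^\sharp$ of a torus in a ball. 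Only after these computations does the paper change basis in $W\otimes\mathbb{C}$ to eigenvectors $\mathbf{w}_\pm$ of $\muu(R)$, whereupon the saddle maps match the APS rules exactly (Proposition~\ref{vwTQFT}). Note in particular that the merge of two essential circles does \emph{not} vanish wholesale: $\mathbf{w}_+\otimes\mathbf{w}_-\mapsto\mathbf{v}_-$ is nonzero, and it is the $\muu(R)$-eigenspace structure that forces $\mathbf{w}_\pm\otimes\mathbf{w}_\pm\mapsto 0$. So the sphere operator is not optional machinery for the graded refinement---it is already the tool that makes the ungraded $E_2$ identification go through.
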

The annular Khovanov homology $\AKh(\overline{L})$ is  triply graded where the first two gradings are similar to the
homological grading and the quantum grading in the standard Khovanov homology. The third grading is new and a similar grading is
 defined for $\AHI(L;\mathbb{C})$. We also prove that the above spectral sequence preserves this grading (Theorem \ref{AKh-AHI}).  

Suppose $T$ is a balanced admissible tangle (see Definition \ref{tangle-def}) 
in $I\times D$ where $I=[-1,1]$ and $D$ is the standard 2-disk. Closing up $T$,  we obtain 
a link $\hat{T}$ in $S^1\times D$ which is the same as the thickened annulus. 
Let $i$ be the number of end points of $T$ on the top disk.
The degree $i$ summand $\AHI(\hat{T},i;\mathbb{C})$ of 
$\AHI(L;\mathbb{C})$ is just 
the instanton Floer homology $\THI(T)$ for tangles introduced in \cite{Street}.  On the other hand,
the degree $i$ summand $\AKh(\hat{T},i)$ of $\AKh(\hat{T})$ is isomorphic to the Khovanov homology $\TKh(T)$ for tangles
introduced in \cite{Kh-color} (see also \cite{GW-annular} and \cite{GN-suture}). 
Therefore degree $i$ summand of the spectral sequence of Theorem \ref{AKh-AHI*} gives the following.
\begin{COR}\label{TKh-THI*}
Let $T\subset I\times D$ be a balanced admissible tangle and $\overline{T}$ be its mirror image.
There is a spectral sequence whose $E_2$-page is the tangle Khovanov homology
$\TKh(\overline{T};\mathbb{C})$ and which converges to the tangle instanton Floer homology $\THI(T)$.
\end{COR}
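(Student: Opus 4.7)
The plan is to obtain this corollary as the degree-$i$ summand of the spectral sequence of Theorem \ref{AKh-AHI*} applied to the closure $\hat{T}$, by invoking the refinement (Theorem \ref{AKh-AHI}) which states that the spectral sequence respects the third (annular) grading.

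First, I would apply Theorem \ref{AKh-AHI*} to the closed-up link $\hat{T}\subset S^1\times D$, noting that closure commutes with mirroring so $\overline{\hat{T}}=\widehat{\overline{T}}$. This yields a spectral sequence from $\AKh(\widehat{\overline{T}};\mathbb{C})$ converging to $\AHI(\hat{T};\mathbb{C})$. Next, I would use the graded refinement Theorem \ref{AKh-AHI} to conclude that every differential on every page preserves the third grading, so that the spectral sequence decomposes as a direct sum of spectral sequences indexed by this grading. Restricting to the summand of degree $i$ (where $i$ is the number of endpoints of $T$ on the top disk) produces a spectral sequence from $\AKh(\widehat{\overline{T}},i;\mathbb{C})$ to $\AHI(\hat{T},i;\mathbb{C})$.

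Finally, I would substitute the identifications recalled just before the corollary, namely $\AKh(\hat{T},i)\cong\TKh(T)$ (from \cite{Kh-color}) and $\AHI(\hat{T},i;\mathbb{C})\cong\THI(T)$ (from \cite{Street}), together with the analogous identifications for $\overline{T}$. This gives exactly the spectral sequence claimed from $\TKh(\overline{T};\mathbb{C})$ to $\THI(T)$.

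The only real bookkeeping obstacle is to verify that the value of $i$ indexing the summand is compatible on both sides, i.e.\ that both $\AKh(\hat{T},i)\cong\TKh(T)$ and $\AHI(\hat{T},i;\mathbb{C})\cong\THI(T)$ are stated using the same convention for the third grading, and that mirroring a tangle does not alter the number of endpoints on the top disk. Both points are tautological once the grading conventions are pinned down, so the whole corollary is a formal consequence of the graded version of the main theorem rather than requiring any new Floer-theoretic input.
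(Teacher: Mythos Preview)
Your proposal is correct and follows exactly the paper's approach: restrict the graded spectral sequence of Theorem~\ref{AKh-AHI} for the closure $\hat{T}$ to the degree $i=|T\cap D^+|$ summand, and invoke the identifications $\TKh(T)\cong\AKh(\hat{T},i)$ and $\THI(T)\cong\AHI(\hat{T},i)$. The paper's own argument (stated just before the corollary in the introduction and repeated as Theorem~\ref{TKh-THI}) is no more than this.
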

This corollary is an analogue of the spectral sequence relating tangle Khovanov homology and sutured Heegaard Floer homology
in \cite{GW-annular}. By a theorem of Street \cite{Street}, 
$\THI(T)\cong \mathbb{C}$ if and only if $T$ is isotopic to a braid. Therefore we obtain a new proof of the following.
\begin{THE}\label{braid-detection-TKh*}
Let $T\subset I\times D$ be a balanced admissible tangle. Then $\TKh(T;\mathbb{C})\cong\mathbb{C}$ if and only if $T$ is isotopic to a braid.
\end{THE}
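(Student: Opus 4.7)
The plan is to deduce this theorem as a direct corollary of the spectral sequence in Corollary \ref{TKh-THI*} combined with Street's characterization of braids via $\THI$. Since braid-ness is preserved under mirroring, both directions of the equivalence should follow cleanly; most of the work has been packaged into the two inputs.

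For the ``only if'' direction, assume $T$ is isotopic to a braid. Then $\overline{T}$ is also a braid, so by Street's theorem $\THI(\overline{T};\mathbb{C})\cong\mathbb{C}$. The spectral sequence in Corollary \ref{TKh-THI*} applied to $\overline{T}$ has $E_2$-page $\TKh(\overline{\overline{T}};\mathbb{C})=\TKh(T;\mathbb{C})$ and converges to $\THI(\overline{T};\mathbb{C})=\mathbb{C}$; in particular, $\dim_{\mathbb{C}}\TKh(T;\mathbb{C})\geq 1$. For the matching upper bound I would argue directly from the cube of resolutions: a braid admits a tautological planar tangle as its oriented resolution, and one checks that the only contribution to $\TKh(T;\mathbb{C})$ in the extremal $k$-grading (the one recording wrapping number around the annular axis) is one-dimensional, with every other resolution killed by the internal differential on the Khovanov cube. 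Equivalently, one may invoke the analogous computation already established in \cite{GW-annular} or \cite{GN-suture}, where braids are shown to have trivial tangle Khovanov homology of total rank $1$.

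For the ``if'' direction, suppose $\TKh(T;\mathbb{C})\cong\mathbb{C}$. Apply the spectral sequence of Corollary \ref{TKh-THI*} to the tangle $\overline{T}$: its $E_2$-page is $\TKh(\overline{\overline{T}};\mathbb{C})=\TKh(T;\mathbb{C})\cong\mathbb{C}$ and it converges to $\THI(\overline{T};\mathbb{C})$. Since $E_\infty$ is a subquotient of $E_2$, we get $\dim_{\mathbb{C}}\THI(\overline{T};\mathbb{C})\leq 1$. Admissibility guarantees that $\THI(\overline{T};\mathbb{C})$ is nonzero (this is part of Street's setup), so $\THI(\overline{T};\mathbb{C})\cong\mathbb{C}$. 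By Street's theorem \cite{Street}, $\overline{T}$ is isotopic to a braid, and hence so is $T$.

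The main obstacle is really just bookkeeping: I need to verify carefully that mirroring the tangle $T$ does not destroy admissibility or balancedness (so that Street's theorem applies to $\overline{T}$ as well), and that the spectral sequence's convergence is a genuine one so that $\dim E_\infty \leq \dim E_2$ holds over $\mathbb{C}$. The forward direction's direct Khovanov-cube computation for a braid is straightforward but slightly tedious; one could streamline it by citing the analogous Heegaard Floer result from \cite{GW-annular} and noting the symmetric structure of the two sides of the spectral sequence. Beyond that, no new technical ingredients are needed---the theorem is a genuine two-line corollary of the spectral sequence and Street's detection theorem.
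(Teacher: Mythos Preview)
Your overall strategy matches the paper's, but there is a genuine gap in the ``if'' direction. You assert that ``admissibility guarantees that $\THI(\overline{T};\mathbb{C})$ is nonzero (this is part of Street's setup)''. This is not true: for a balanced admissible tangle that is \emph{not} vertical (say one with an arc whose endpoints both lie on $D^+$), one can find a disk slice meeting the closure $\hat{T}$ in fewer than $m$ points, and then Proposition~\ref{eigen-range} forces $\THI(T)=\AHI(\hat{T},m)=0$. So from $\dim\THI(\overline{T})\le 1$ alone you cannot rule out $\THI(\overline{T})=0$, and Street's detection theorem (which in any case is stated for vertical tangles) does not apply.

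The paper closes this gap with a parity argument rather than a non-vanishing result: the total dimension of each page of the spectral sequence has fixed parity mod $2$, so $\dim E_2=1$ forces $\dim E_\infty$ to be odd, hence exactly $1$. This simultaneously gives $\THI(\overline{T})\cong\mathbb{C}$ and (via the same parity reasoning developed in Section~\ref{AHI}) shows that $\overline{T}$ must be vertical, so that Street's theorem can be invoked. Your ``only if'' direction is fine and agrees with the paper's one-line cube-of-resolutions observation.
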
 
This theorem (with $\mathbb{Z}/2$ coefficients) is first proved in \cite{GN-suture}.
The original proof uses the spectral sequence in \cite{GW-annular} as well as the unknot detection result in \cite{KM:Kh-unknot}.

Khovanov also defined a sequence of invariants $\Khr_n(K)$ of a knot $K\subset S^3$  
which categorify the (reduced) $n$-colored Jones polynomials in \cite{Kh-color}. 
In \cite{GW-color},
they generalized Ozsv\'{a}th-Szab\'{o}'s spectral sequence to the case of colored Khovanov homology. Similar to their result, another
application of Corollary \ref{TKh-THI*} is a generalization of the spectral sequence in \cite{KM:Kh-unknot}.
\begin{THE}\label{colored-ss}
Let $K$ be a knot in $S^3$ and $\overline{K}$ be its mirror image.
There is a spectral sequence whose $E_2$-page is the reduced $n$-colored Khovanov homology
$\Khr_n(\overline{K};\mathbb{C})$ and which converges to  the reduced singular instanton Floer homology $\II^\natural(K;\mathbb{C})$.         %
\end{THE}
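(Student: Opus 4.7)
The plan is to derive Theorem \ref{colored-ss} from Corollary \ref{TKh-THI*} applied to the $n$-cable of $K$, using a categorified Jones-Wenzl projector to cut out the reduced $n$-colored invariants on both sides. This follows the template of Grigsby and Wehrli \cite{GW-color}, who obtained the analogous colored spectral sequence for Heegaard Floer homology from the Ozsv\'ath-Szab\'o spectral sequence.

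First, I would present $K$ as the closure of a $(1,1)$-tangle $K^{\circ}\subset I\times D$ and take its $n$-fold cable $C_n(K^{\circ})$, which is an admissible balanced $(n,n)$-tangle in $I\times D$. Applying Corollary \ref{TKh-THI*} to $C_n(K^{\circ})$ yields a spectral sequence
\[
\TKh(\overline{C_n(K^{\circ})};\mathbb{C}) \;\Longrightarrow\; \THI(C_n(K^{\circ})).
\]
Next, I would insert the categorified Jones-Wenzl projector $P_n$, realized as a specific chain complex of $(n,n)$-tangles in the Karoubi envelope of the Bar-Natan/Khovanov tangle category, along one end of $C_n(K^{\circ})$. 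By Khovanov's construction of colored Khovanov homology in \cite{Kh-color}, the resulting projected complex on the Khovanov side computes $\Khr_n(\overline{K};\mathbb{C})$. The analogous projection on the instanton side should compute $\II^\natural(K;\mathbb{C})$; granting this together with the naturality of the spectral sequence under the chain maps that build $P_n$, one obtains a spectral sequence with $E_2$-page $\Khr_n(\overline{K};\mathbb{C})$ converging to $\II^\natural(K;\mathbb{C})$.

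The main obstacle is the assertion that the $P_n$-projected tangle instanton complex recovers $\II^\natural(K;\mathbb{C})$. This should rest on a Karoubi-type idempotent argument together with a direct computation of the action of $P_n$ on $\THI$ of the identity $n$-braid, which identifies the relevant summand as a rank-one local model, combined with an excision or tangle composition law expressing $\II^\natural(K)$ as the tangle instanton homology of the resulting cable-and-cap configuration. A secondary but nontrivial issue is to verify that the chain maps comprising $P_n$ respect the filtration that produces the spectral sequence of Corollary \ref{TKh-THI*}, so that the projection descends to each page and, in particular, identifies the $E_2$-page of the projected spectral sequence with $\Khr_n(\overline{K};\mathbb{C})$ as defined by Khovanov rather than with some a priori different complex.
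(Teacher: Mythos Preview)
Your approach diverges from the paper's in a way that introduces serious, unnecessary complications. The paper's argument is much more direct, and the projector machinery you invoke is not needed at all.

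Here is the key point you are missing. In the paper's conventions (following Khovanov's original definition in \cite{Kh-color}), the reduced $n$-colored Khovanov homology is \emph{defined} as
\[
\Khr_n(K)\;:=\;\TKh(T^n(K)),
\]
where $T^n(K)$ is the $n$-cable of the $(1,1)$-tangle obtained by opening $K$ at a point. No categorified Jones--Wenzl projector is involved. So once you apply Corollary~\ref{TKh-THI*} to $T^n(K)$, the $E_2$-page is already $\Khr_n(\overline{K};\mathbb{C})$; there is nothing to project out.

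The entire content of the theorem then reduces to identifying the abutment:
\[
\THI(T^n(K))\;\cong\;\II^\natural(K;\mathbb{C}).
\]
The paper proves this by a sequence of torus and sphere excisions (Theorems~\ref{Texcision} and~\ref{Sexcision}), together with the identification $\II^\natural(K;\mathbb{C})\cong\THI(T(K))$ established earlier (Proposition~\ref{A-T-I}). Concretely, one does excision along $\partial N(\hat{K})$ to trade the $1$-cable for the $n$-cable, and then a further excision to adjust the arc $\omega$ to $u$; the sphere operators intertwine under these cobordisms, so the top-eigenvalue summands match.

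Your proposed route through categorified projectors is, as you yourself note, full of gaps: you would need to (i) build a version of $P_n$ that acts on the instanton side, (ii) prove it is idempotent there, (iii) identify its image with $\II^\natural(K;\mathbb{C})$, and (iv) check compatibility with the filtration. None of this is in the literature for singular instanton homology, and none of it is required. The paper's excision argument bypasses all of it.
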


Let $L\subset S^1\times D$ be a link with all the components null-homologous. A properly embedded surface $\Sigma$ in $S^1\times D$ 
is called an \emph{admissible surface} for $L$ 
if it is  a connected orientable surface which is bounded by a non-null-homologous circle in $S^1\times \partial D$  
and disjoint from $L$. This is the same as saying $\partial \Sigma\cong S^1$, $\Sigma\cap L=\emptyset$ and $[\Sigma,\partial \Sigma]$
generates $H_2(S^1\times D, \partial(S^1\times D);\mathbb{Z})\cong \mathbb{Z}$.
With the help of Kronheimer and Mrowka's instanton Floer homology for sutured manifolds \cite{KM:suture}, we prove a non-vanishing result 
for the annular instanton Floer homology.
\begin{THE}\label{AHI-non-vanish*}
Suppose $L\subset S^1\times D$ is a link with all the components null-homologous and $\Sigma$ is an
admissible surface for $L$ with minimal genus, then we have
\begin{equation*}
\AHI(L,\pm 2g(\Sigma)) \neq 0
\end{equation*} 
\end{THE}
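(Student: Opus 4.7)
The plan is to realise $\AHI(L;\mathbb{C})$ as the sutured instanton Floer homology $\SHI(M,\gamma)$ of the sutured exterior of $L$ in $S^{1}\times D$, to identify the third (annular) grading with a generalised eigenspace decomposition of $\muu$ on a marked closure, and then to extract non-vanishing in the extremal grading from Kronheimer--Mrowka's sutured surface decomposition theorem for $\SHI$ in \cite{KM:suture}. This is an instanton analogue of the way Heegaard Floer homology detects the Thurston norm.

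First I would show that $\AHI(L;\mathbb{C})\cong \SHI(M,\gamma)$, where $M$ is the exterior of $L$ in $S^{1}\times D$, the sutures on each torus component of $\partial\nu(L)$ are two meridional circles, and the sutures on $S^{1}\times\partial D$ are two parallel copies of $S^{1}\times\{\pt\}$; under this identification the annular grading should coincide with the $\muu$-eigenspace decomposition along a horizontal disc $\{\theta\}\times D$, capped off in a standard closure of $(M,\gamma)$. Because every component of $L$ is null-homologous, the admissible surface $\Sigma$ represents a non-trivial relative homology class in $(M,\gamma)$, and can be isotoped to a properly embedded decomposing surface meeting the sutures consistently with their orientation. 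Apply Kronheimer--Mrowka's sutured manifold decomposition formula for $\SHI$ along $\Sigma$ to obtain a new balanced sutured manifold $(M',\gamma')$ and an isomorphism
\begin{equation*}
\SHI(M',\gamma')\;\cong\;\AHI(L,2g(\Sigma);\mathbb{C})
\end{equation*}
identifying the right-hand side with the top grading in the $\Sigma$-decomposition.

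Because $\Sigma$ has minimal genus among admissible surfaces, the decomposition along $\Sigma$ is taut, so $(M',\gamma')$ is a taut balanced sutured manifold; the main theorem of \cite{KM:suture} then yields $\SHI(M',\gamma')\neq 0$, which proves the statement for the grading $+2g(\Sigma)$. The statement for $-2g(\Sigma)$ follows by reversing the orientation of $\Sigma$ (or by the conjugation symmetry of $\AHI$). The main technical obstacle is the identification in the previous paragraph: one must verify that the sutures can be chosen so that the annular grading on $\AHI(L;\mathbb{C})$ matches precisely the $\muu(\widehat{\Sigma})$-eigenspace decomposition on a closure, with the extremal grading $\pm 2g(\Sigma)$ corresponding to the extremal eigenvalue $\pm 2g(\widehat{\Sigma})$ and no unwanted shift coming from the auxiliary genus picked up in the closure process. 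Once this bookkeeping is in place, the non-vanishing is a direct application of Kronheimer--Mrowka's taut decomposition machinery.
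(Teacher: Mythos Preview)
Your overall strategy is the same as the paper's: reduce to non-vanishing of $\SHI$ for a taut sutured manifold obtained from the link exterior. But the identification $\AHI(L;\mathbb{C})\cong \SHI(M,\gamma)$ that you propose, with two meridional sutures on each $\partial N(K_i)$, is not correct, and this is not just bookkeeping. When you close up such an $(M,\gamma)$ in the Kronheimer--Mrowka sense, each torus boundary $\partial N(K_i)$ forces an $\omega$-arc through $K_i$; the result is the triple $(S^1\times S^2, L\cup m_L\cup\mathcal{K}_2, u+u_L)$ in which every component of $L$ carries a meridional ``earring'' $m_i$ joined to it by an arc $u_i$. Thus what you actually get (the paper's Proposition~\ref{suture=singular}) is
\[
\SHI(M_0,\gamma_0)\;\cong\;\II(S^1\times S^2, L\cup m_L\cup\mathcal{K}_2, u+u_L\,|\,\bar\Sigma),
\]
where $(M_0,\gamma_0)$ is your $(M',\gamma')$. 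This group is not $\AHI(L,2g(\Sigma))$; already for $L=U_1$ the two sides have different ranks.

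The paper closes this gap with an extra step you are missing. After establishing tautness of $(M_0,\gamma_0)$ and hence non-vanishing of the earringed top eigenspace, it removes the earrings one at a time using the unoriented skein exact triangle applied at a crossing between $K_i$ and its earring $m_i$. The two smoothings of that crossing are isotopic links with one fewer earring, so the exact triangle reads
\[
\cdots\to \II(\text{with }m_i)\to \II(\text{without }m_i)\to \II(\text{without }m_i)\to\cdots
\]
on each $\muu(\bar\Sigma)$-eigenspace, and non-vanishing propagates: if the de-earringed eigenspace were zero, exactness would force the earringed one to vanish as well. Iterating over all components yields $\AHI(L,2g(\Sigma))\neq 0$. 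So your surface-decomposition outline is morally right, but the passage from $\SHI$ back to $\AHI$ genuinely requires this skein-triangle argument rather than a direct isomorphism.
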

If $\AHI(L;\mathbb{C})$ is supported in degree $0$, then by the above theorem there is a properly embedded disk 
in $S^1\times D$ whose boundary circle is homologous to $\partial D$ in $S^1 \times \partial D$ and 
which is disjoint from $L$. This means $L$ is included in a three-ball $B^3\subset S^1\times D$. 
By Theorem \ref{AKh-AHI*} (and the discussion below it), we have the following.
\begin{COR}
Suppose $L\subset S^1\times D$ is a link with all the components null-homologous and 
$\AKh(L)$ is supported in degree $0$ (the third grading), then $L$ is included in a three-ball $B^3\subset S^1\times D$.
\end{COR}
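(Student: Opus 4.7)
The plan is to chain Theorem~\ref{AKh-AHI*} and its graded refinement (Theorem~\ref{AKh-AHI}) with Theorem~\ref{AHI-non-vanish*}, and finish with a short three-manifold topology argument.

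First I would observe that the third (annular) grading on $\AKh$ is preserved under mirroring. In the Asaeda--Przytycki--Sikora model this grading records the signed winding of resolution circles about the core of the annulus, a datum which only depends on how circles sit in the annulus and not on the crossing signs of a diagram. Thus mirroring, which just swaps the two resolutions at each crossing, leaves the third grading untouched, and the hypothesis $\AKh(L;\mathbb{C})$ supported in degree $0$ passes to $\AKh(\overline{L};\mathbb{C})$.

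Next, by Theorem~\ref{AKh-AHI} the spectral sequence of Theorem~\ref{AKh-AHI*} respects this third grading. Starting from an $E_2$-page concentrated in degree $0$, every later page and the abutment also live in degree $0$, so $\AHI(L,i;\mathbb{C})=0$ for every $i\neq 0$. Let $\Sigma$ be an admissible surface for $L$ of minimal genus $g$. Theorem~\ref{AHI-non-vanish*} yields $\AHI(L,\pm 2g;\mathbb{C})\neq 0$, which is compatible with concentration at $i=0$ only if $g=0$. Hence there exists a properly embedded disk $\Sigma\subset S^1\times D$ disjoint from $L$ whose boundary is a single circle on $T^2=\partial(S^1\times D)$.

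Finally, I would turn the disjoint disk into a three-ball containing $L$. Since $\partial\Sigma$ bounds the disk $\Sigma$ in $S^1\times D$, it is null-homotopic in the solid torus; as a simple closed curve on $T^2$ it must therefore be either null-homotopic in $T^2$ or isotopic to the standard meridian. Under the admissibility convention of Theorem~\ref{AHI-non-vanish*}, the null-homotopic case can be ruled out (such a $\Sigma$ can be pushed into a collar of $\partial(S^1\times D)$, lowering it in an ordering refined from genus and making it ``trivial'' as an admissible surface). Thus $\partial\Sigma$ is a meridian, $\Sigma$ is isotopic to a standard meridian disk, and cutting $S^1\times D$ along $\Sigma$ yields a three-ball which, being disjoint from $\Sigma$, contains all of $L$. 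The main obstacle is exactly this last point: verifying that the admissibility/minimal-genus convention in Theorem~\ref{AHI-non-vanish*} excludes disks with null-homotopic boundary on the torus; once this is pinned down, the remaining three-manifold topology is routine in the irreducible solid torus.
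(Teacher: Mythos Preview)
Your approach is correct and coincides with the paper's own route: pass from $\AKh$ to $\AHI$ via the graded spectral sequence (Theorem~\ref{AKh-AHI}), invoke Theorem~\ref{AHI-non-vanish*} to force the minimal-genus admissible surface to be a disk, and cut along it to get the ball. The paper sketches exactly this in two sentences and gives no further detail.

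On the point you flag as the ``main obstacle'': it is a fair worry about the wording of the definition, but it dissolves once you read how admissible surfaces are actually used in Section~\ref{AHI}. The construction there starts from a $D$-slice $\{pt\}\times D$ and tubes away intersection points with $L$, so the boundary stays equal to $\{pt\}\times\partial D$; and the proofs of Propositions~\ref{M0-taut} and~\ref{suture=singular} explicitly assume $\partial\Sigma$ lies in a $D$-slice (the sutured manifold $(M_0,\gamma_0)$ is built with an annulus $I\times S^1\subset S^1\times\partial D$ as part of its boundary, and $\bar\Sigma=\Sigma\cup D$ is formed by capping with a $D$-slice). So the operative convention throughout is that $\partial\Sigma$ is the meridian class, not a null-homotopic curve, and your concluding topology step is then immediate.
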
 
When an annulus link $L$ is included in a three-ball $B^3\subset S^1\times D$,
$\AKh(L;0)$ ($0$ is the third grading) is 
isomorphic to $\Kh (L)$ (view $L$ as a link in $B^3$ to define $\Kh(L)$)
as a bi-graded abelian group. 
Together with the result that Khovanov homology detects the unlink \cite{Kh-unlink,HN-module}, we have the following.
\begin{COR}
Suppose $L\subset S^1\times D$ is a link with $k$ components and all the components are null-homologous. If
 $\AKh(L)$ is supported in degree $0$ (the third grading) and $\AKh(L,0)\cong \Kh(U_k)$  where $U_k$ denotes the 
 unlink with $k$ components, then $L$ is an unlink in $S^1\times D$.
\end{COR}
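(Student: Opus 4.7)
The plan is to combine the immediately preceding corollary with the theorem that ordinary Khovanov homology detects the unlink, using the identification $\AKh(L;0)\cong\Kh(L)$ for links inside a three-ball as the bridge.

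First, the hypothesis that $\AKh(L)$ is supported in the third grading $0$ feeds directly into the previous corollary and produces a three-ball $B^3\subset S^1\times D$ containing $L$. Viewing $L$ as a link in $B^3$ (equivalently in $S^3$ via $B^3\hookrightarrow S^3$), the identification recalled in the paragraph just above the statement gives
\[
\Kh(L)\;\cong\;\AKh(L,0)\;\cong\;\Kh(U_k),
\]
the second isomorphism being the assumption. The unlink-detection theorem for Khovanov homology \cite{Kh-unlink,HN-module} then forces $L$, viewed as a link in $S^3$, to be isotopic to the $k$-component unlink $U_k$.

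To finish, I need $L$ to be an unlink inside $S^1\times D$ rather than merely inside $S^3$. Since $L\subset B^3$ and $L$ is a split unlink in $S^3$, an innermost-disk / Schoenflies argument inside $B^3$ produces disjoint embedded disks in $B^3$ bounded by the components of $L$; these disks a fortiori lie in $S^1\times D$, exhibiting $L$ as an unlink in the thickened annulus.

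The proof is essentially a routine chaining of the cited inputs once the two preceding corollaries are in hand. The only step requiring any care is the last one, namely arranging the splitting disks to sit inside $B^3$ rather than in all of $S^3$; this is a standard three-manifold topology observation and constitutes the only point where one has to do anything beyond invoking a cited theorem.
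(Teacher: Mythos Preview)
Your proof is correct and follows essentially the same route as the paper: invoke the previous corollary to place $L$ in a three-ball, use the identification $\AKh(L,0)\cong\Kh(L)$ as bigraded groups, and then apply unlink detection for Khovanov homology. The paper in fact leaves the final step (passing from ``unlink in $S^3$'' to ``bounds disjoint disks in $B^3\subset S^1\times D$'') implicit, so your explicit remark about it is a slight improvement in detail.
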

An unlink in $S^1\times D$ means a link bounding a collection of disjoint disks in $S^1\times D$. For any annulus link $L$,
there is a spectral sequence whose $E_1$-page is $\AKh(L)$ and which converges to $\Kh(L)$ where 
we use the standard embedding $S^1\times D\subset S^3$ to make $L$ into a link in $S^3$ and define $\Kh(L)$
\cite{Rob}. In particular, 
 if $K$ is a knot with $\rankk \AKh(K)= 2$, then we have $\rankk \Kh(K)\le 2$ hence $K$ is an unknot in $S^3$ by \cite{KM:Kh-unknot}.
 However, this argument cannot tell us that $K$ is an unknot in $S^1\times D$. For example, the knot in Figure \ref{Whitehead-Knot}
 is an unknot in $S^3$ but it is a null-homologous non-trivial knot in $S^1\times D$. 

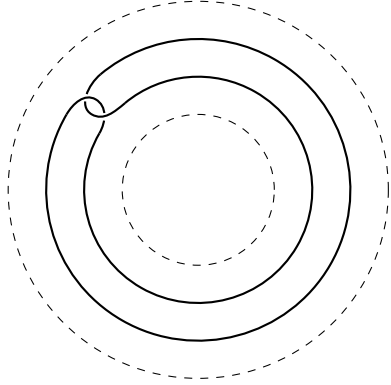
\begin{figure}
\centering
\begin{tikzpicture}

\draw[dashed] (0,0) circle [radius=1];  \draw[dashed] (0,0) circle [radius=2.5];
\draw[thick]  (0-1.3,0.75) arc [radius=1.5, start angle=150, end angle=490];
\draw[thick]  (0-1.732,1) arc [radius=2, start angle=150, end angle=490]; 

\draw[thick, dash pattern=on 0.18cm off 0.1cm on 100cm] (0-1.3,0.75) to [out=60, in=-30] (-1.35, 1.2) to [out=150, in=60]  (0-1.732,1); 
\draw[thick, dash pattern=on 0.7cm off 0.1cm on 100cm]  (0-0.964 ,1.149) to [out=220, in=-30] (-1.4, 1.0) to [out=150, in=220]  (0-1.285, 1.532);
\end{tikzpicture}
\caption{A null-homologous knot in the thickened annulus}\label{Whitehead-Knot}
\end{figure}

This paper is organized as follows:
Section \ref{AKh} is a review of the definitions of annular Khovanov homology and tangle Khovanov homology. 
In Section \ref{tangle},  we present necessary background on Street's instanton Floer
 homology theory for tangles which will be used later. We define the annular instanton Floer homology 
 and prove Theorem \ref{AHI-non-vanish*}
 in Section \ref{AHI}.
 In Section \ref{SS}, we prove
 Theorem \ref{AKh-AHI*}. Finally we apply Theorem \ref{AKh-AHI*} to derive  Corollary \ref{TKh-THI*}, Theorem \ref{braid-detection-TKh*}
 and Theorem \ref{colored-ss} in Section \ref{app-ss}.

In this paper all versions of Khovanov homologies are defined over $\mathbb{Z}$ unless otherwise specified. All the instanton Floer homologies
are defined over $\mathbb{Z}$ unless we specify the coefficients or the Floer homology group is defined as a generalized eigenspace
for an operator. We use complex coefficients whenever we take the generalized eigenspaces.  

\emph{Acknowledgments.} 
 The author would like to thank the hospitality of Yi Ni, Siqi He and
Qiongling Li during a visit to CalTech where most progress of this work was made. This paper owes a lot to the work of 
Kronheimer, Mrowka and Street. The  author was supported by National Key R\&D Program of China 2020YFA0712801 and NSFC 12071005. 

\section{Annular Khovanov Homology}\label{AKh}
In this section we will review the {annular Khovanov homology} defined for (framed) links in an thickened annulus, 
which is a special case of 
the invariants defined in \cite{APS}.
An elaboration and adjustment of their definition with $\mathbb{Z}/2$-coefficients
 can be found in \cite{Rob}. Since we need to work in characteristic $0$, we will follow the discussion in \cite{Rob} to
review the definition and deal with the sign carefully in this section. 
The annular Khovanov homology of a link is a triply-graded abelian group. But
we will omit the discussion of the first two gradings because only the third grading is important for us in this paper.

We use $A$ to denote an annulus.
Let $L\subset A\times I$ be a link with a projection to $A\cong A\times \{0\}$. The projection gives a diagram $D$ with $c$ crossings. Fix an order for the
crossings.
Given any $v=(v_1,v_2,\cdots, v_c)\in \{0,1\}^c$, we obtain
a collection of circles in $A$ by resolving the crossings using 0-smoothing or 1-smoothing determined by $v$ (see Figure \ref{01smoothing}).
\begin{figure}
\centering
\begin{tikzpicture}
\draw[thick] (1,-1) to (-1,1); \draw[thick,dash pattern=on 1.3cm off 0.25cm] (1,1) to (-1,-1);  \node[below] at (0,-1.2) {A crossing};

\draw[thick] (2,1)  to [out=315,in=180]  (3,0.3) to [out=0,in=225]   (4,1);
\draw[thick] (2,-1)  to [out=45,in=180]  (3,-0.3) to [out=0,in=135]   (4,-1);  \node[below] at (3,-1.2) {0-smoothing};

\draw[thick] (5,1)  to [out=315,in=90]  (5.7,0) to [out=270,in=45]    (5,-1);  \node[below] at (6,-1.2) {1-smoothing};
\draw[thick] (7,1)  to [out=225,in=90]  (6.3,0) to [out=270,in=135]   (7,-1);
\end{tikzpicture}
\caption{Two types of smoothings}\label{01smoothing}
\end{figure}
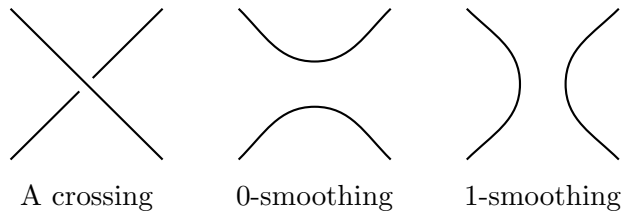
There are two types of circles: circles that bound disks and homologically non-trivial circles. We call them trivial circle and non-trivial circles respectively.
Define graded free abelian groups $V:=\mathbb{Z}\{\mathbf{v}_+,\mathbf{v}_-\}$ with $\deg \mathbf{v}_\pm=0$ and
$W:=\mathbb{Z}\{\mathbf{w}_+,\mathbf{w}_-\}$ with $\deg \mathbf{w}_\pm=\pm 1$. 
Given any $v\in \{0,1\}^c$, suppose there are $a$ trivial circles and $b$ non-trivial circles
in the resolution determined by $v$, then we define
\begin{equation*}
  CKh_v(L):= V^{\otimes a} \otimes W^{\otimes b}
\end{equation*}
This means that we assign a copy of $V$ to each trivial circle and a copy of $W$ to each non-trivial circle.
If we change the 0-smoothing at a crossing into a 1-smoothing, then in the resolution either two circles merge into one circle or one circle splits into two circles.
We use $u\in \{0,1\}^c$ to record the new resolution. To define a map from $CKh_v$ to $CKh_u$, it is enough to specify maps between abelian groups assigned to the
circles involved in the merging or splitting. In the case two trivial circles merge into one trivial circle, the maps are
\begin{align*}
  \mathbf{v}_+ \otimes \mathbf{v}_+ &\mapsto \mathbf{v}_+,  &\mathbf{v}_+& \otimes \mathbf{v}_- \mapsto \mathbf{v}_- \\
  \mathbf{v}_- \otimes \mathbf{v}_+ &\mapsto \mathbf{v}_-,  &\mathbf{v}_-& \otimes \mathbf{v}_- \mapsto 0 \\
\end{align*}
In the case a trivial circle and a non-trivial circle merge into a non-trivial circle, the maps are
\begin{align*}
  \mathbf{v}_+\otimes \mathbf{w}_+ &\mapsto \mathbf{w}_+, &\mathbf{v}_+&\otimes \mathbf{w}_- \mapsto \mathbf{w}_- \\
  \mathbf{v}_-\otimes \mathbf{w}_+ &\mapsto 0,           &\mathbf{v}_-&\otimes \mathbf{w}_- \mapsto 0\\
\end{align*}
In the case two non-trivial circles merge into a trivial circle, the maps are
\begin{align*}
  \mathbf{w}_+\otimes \mathbf{w}_+ &\mapsto 0,   &\mathbf{w}_-&\otimes \mathbf{w}_- \mapsto 0\\
  \mathbf{w}_+\otimes \mathbf{w}_- &\mapsto \mathbf{v}_-, &\mathbf{w}_-& \otimes \mathbf{w}_+ \mapsto \mathbf{v}_-\\
\end{align*}
If one circle splits into two circles, there are also cases obtained by reversing the above three cases. The relevant maps are
\begin{align*}
   \mathbf{v}_+ &\mapsto \mathbf{v}_+\otimes \mathbf{v}_- + \mathbf{v}_-\otimes \mathbf{v}_+,  &\mathbf{v}_-& \mapsto \mathbf{v}_-\otimes \mathbf{v}_-; \\
   \mathbf{w}_+ &\mapsto \mathbf{v}_-\otimes \mathbf{w}_+, &\mathbf{w}_-& \mapsto \mathbf{v}_-\otimes \mathbf{w}_-; \\
  \mathbf{v}_+ &\mapsto \mathbf{w}_+\otimes \mathbf{w}_- + \mathbf{w}_-\otimes \mathbf{w}_+ , &\mathbf{v}_-& \mapsto 0.
\end{align*}
In this way, we obtain a map $d_{vu}: C_v \to C_u$ when $u-v=e_i$ where $e_i$ denotes the $i$-th standard basis vector of $\mathbb{R}^c$.
This map preserves the grading. Now we define
\begin{equation*}
  CKh(L):=\bigoplus_{v\in \{0,1\}} CKh_v(L)
\end{equation*}
and equip it with a differential
\begin{equation*}
  D:= \sum_i \sum_{u-v=e_i}  (-1)^{\sum_{i<j\le c}v_j}  d_{vu}
\end{equation*}
Since $D$ preserves the grading, the homology
\begin{equation*}
  \AKh(L):=H_\ast(CKh(L),D)
\end{equation*}
is a graded abelian group. We denote the degree $m$ summand by $\AKh(L,m)$.
\begin{THE}[{{\cite{APS}}}]
The annular Khovanov homology $\AKh(L)$ is a well-defined link invariant: it does not depend on the choice of diagram $D$ or the order of the crossings.
\end{THE}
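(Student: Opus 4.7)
The plan is to verify the three ingredients that make $\AKh(L)$ a well-defined invariant: (a) the endomorphism $D$ is actually a differential, i.e. $D^2=0$; (b) the resulting homology is independent of the ordering of the crossings used to define the signs; (c) the homology is invariant under the Reidemeister moves (in their framed form for framed links). The strategy mirrors the standard invariance proof for the Khovanov complex, but one must carry along the extra data of which circles are trivial and which are homologically non-trivial in $A$, and check that the modified merge/split formulas involving $W=\mathbb{Z}\{\mathbf{w}_+,\mathbf{w}_-\}$ are compatible with all the relevant diagrammatic moves.

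For $D^2=0$, one examines each two-dimensional face of the cube $\{0,1\}^c$ and verifies that the sum of the two compositions $d_{uv'}\circ d_{vu}$ and $d_{uu'}\circ d_{vu''}$ on that face vanishes. The sign $(-1)^{\sum_{i<j\le c}v_j}$ is engineered so that an anticommuting square follows from a commuting one, so it suffices to show that each elementary square commutes as an unsigned diagram. If the two crossings of the face act on disjoint parts of the resolution, commutativity is immediate. The remaining cases amount to a finite check involving one or two circles that undergo successive merges and/or splits; one runs through the possibilities (two trivial circles interacting, a trivial and a non-trivial circle interacting, two non-trivial circles interacting) and verifies commutativity from the explicit formulas listed in the section. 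Reordering the crossings changes the edge signs by a coboundary on the cube, so rescaling each $CKh_v(L)$ by a suitable sign $\epsilon(v)\in\{\pm 1\}$ gives a chain isomorphism with the reordered complex, settling (b).

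For Reidemeister invariance, the key observation is that a Reidemeister move takes place inside a small disk $U\subset A$, and an arc of a circle that exits $U$ remains non-trivial or trivial according to what it is attached to outside $U$. Consequently, for each Reidemeister move one can write down the same explicit chain homotopy equivalence used in the classical Khovanov case, with the sole modification that whenever a circle involved in the move is non-trivial one must use the $W$-formulas rather than the $V$-formulas. One verifies by inspection that the local relations used in the standard proof — cancellation of delooping, the Frobenius (co)multiplication relations, and so on — continue to hold with the new formulas, because the defining maps are still derived from a Frobenius-style structure on $V\oplus W$ in which $W$ is a bimodule over $V$. The sign conventions must be tracked through the chain homotopies, which is the source of the only genuine bookkeeping.

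The main obstacle is exactly this sign bookkeeping in the Reidemeister invariance proof (especially for RIII, where the chain homotopy involves several two-dimensional faces of the cube and several circles that may independently be trivial or non-trivial). With $\mathbb{Z}/2$ coefficients, as in Roberts \cite{Rob}, this is invisible, so one cannot simply quote his argument; one must redo the verification keeping the signs. Since the local pieces of the Frobenius structure used here are the same ones that appear in the classical Khovanov complex, except for the insertion of $W$, the outcome is the same as in the integral proof of the standard Khovanov invariance, and the theorem follows.
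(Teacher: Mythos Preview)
The paper does not give a proof of this theorem; it is stated with a citation to \cite{APS} and immediately followed by further discussion, so there is no argument in the paper to compare your proposal against. Your outline is a reasonable sketch of the standard APS invariance argument (cube commutativity, sign reordering, and local Reidemeister homotopy equivalences adapted to the trivial/non-trivial circle dichotomy), and nothing in it is obviously wrong, but since the paper treats this result as background from the literature rather than something to be proved, your write-up goes well beyond what the paper itself contains.
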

If $L$ is included in a three-ball $B^3\subset A\times I$, then 
for some diagram 
there is no non-trivial circle in any resolution. In this case,
we have $\AKh(L)$ is supported at degree $0$ and $\AKh(L)\cong \Kh(L)$. This isomorphism respects the homological and quantum gradings
on both sides.

We use $D$ to denote a 2-dimensional disk and $D^\pm$ to denote $ \{\pm 1\}\times D\subset I\times D$ where $I=[-1,1]$.
\begin{DEF}\label{tangle-def}
A \emph{tangle} in $I\times D$ is a properly embedded compact 1-manifold
with (possibly empty) boundary.
A tangle $T\subset I\times D$ is called \emph{admissible} if $T\cap (I\times \partial D)  =\emptyset $ and $\partial T \subset  \partial I\times D$.
$T$ is called \emph{balanced} if $|T\cap D^+|=|T\cap D^-|$.
$T$ is called \emph{vertical} if $T$ has no closed component and every strand of $T$ has one end point in $D^+$ and another end point
in $D^-$.
\end{DEF}

Khovanov homology for tangles (we will call it \emph{tangle Khovanov homology} and denote it by $\TKh$)
is introduced by Khovanov in \cite{Kh-color}. An elaboration of Khovanov's construction in $\mathbb{Z}/2$-coefficients
can be found in \cite{GW-color} and \cite{GN-suture}. It can also be defined as a direct summand of the annular Khovanov homology for a specific link.
Let $T\subset I\times D$ be a balanced admissible tangle such that $|T\cap D^+|=m$. We can close up
$T$ to obtain a link $\hat{T}\subset S^1\times D$. Notice that $\hat{T}$ is not unique.
 By \cite{GW-annular}*{Theorem 3.1} or \cite{GN-suture}*{Proposition 2.4}, we have
\begin{equation}\label{TKh=AKh}
  \TKh(T)\cong \AKh(\hat{T},m)
\end{equation}
In particular, $\AKh(\hat{T},m)$ does not depend on the way we close up $T$.

\section{Instanton Floer Homology for Tangles}\label{tangle}
\subsection{Singular instanton Floer homology and excision}
 The theory of singular instanton Floer homology is developed by Kronheimer and Mrowka in
\cite{KM:YAFT, KM:Kh-unknot}. We will follow the version used in \cite{KM:Kh-unknot}. Recall that given a triple $(Y,K,w)$ where
\begin{itemize}
  \item $Y$ is a closed connected oriented three-manifold,
  \item $K$ is a link in $Y$,
  \item $\omega$ is an embedded 1-manifold in $Y$ meeting $K$ normally at $\partial w$.
\end{itemize}
Then $\omega$ determines an orbifold $SO(3)$ bundle $\check{P}\to \check{Y}$ (more precisely, the \emph{singular bundle data} 
in \cite{KM:Kh-unknot}) on $Y$ with orbifold points $K$ and
$w_2(\check{P})$ the Poincar\'{e} dual of $[\omega]$. 
We say the triple $(Y, K,\omega)$ is \emph{admissible} if there is an embedded surface $\Sigma\subset Y$ such that 
either
\begin{itemize}
  \item $\Sigma$ is disjoint from $K$ and $\omega\cdot \Sigma$ is odd; or
  \item  $\Sigma$ intersects $K$ transversely and $\Sigma\cdot K$ is odd.
\end{itemize}
In this situation, the instanton Floer homology group
$$
\II(Y,K,\omega)
$$
is defined in \cite{KM:Kh-unknot} as the Morse homology of the Chern-Simons functional on the space of orbifold connections with the
asymptotic holonomy around $K$ to be an order 2 element in $SO(3)$.
It is a relatively $\mathbb{Z}/4$-graded abelian group. When $Y$ is disconnected, the instanton Floer homology group can still be defined
if we require that each connected component is admissible.

Let $\Sigma$ be a submanifold of $Y$. If $\Sigma$ is disjoint from $K$, then an operator $\mu(\Sigma)$ of degree $(4-\dimm \Sigma)$ on
$\II(Y,K,\omega;\mathbb{C})$ can be defined.
We follow the convention in \cite{KM:suture, DK}.
Roughly speaking, $\mu(\Sigma)$ is defined by evaluating the class $-\frac{1}{4} p_1(\mathbb{P})/[\Sigma]$
on the moduli spaces. We use $\mathbb{P}$ to denote the universal $SO(3)$-bundle over
$\mathcal{M}\times (Y\setminus K)$ where $\mathcal{M}$ is the moduli space of trajectories of the Chern-Simons functional.
Since $-\frac{1}{4} p_1(\mathbb{P})$ is not always an integral class, we use complex coefficients
whenever we are studying the operator actions.
In most situations of this paper, $\Sigma$ will be an embedded surface. Now suppose $\Sigma$ is an embedded surface intersecting
$K$ transversely. In this case, $\mu(\Sigma)$ depends on the choice of extension of 
$\mathbb{P}\to \mathcal{M}\times (\Sigma\setminus \Sigma\cap K)$ to
$\mathcal{M}\times \Sigma$. At each point in $\Sigma\cap K$, there are two different choices of extensions. To obtain a well-defined operator, we
first pick one choice at each point to define an operator, then reverse all the choices to define another operator and finally take the average 
of these two operators. 
We denote the last operator 
by $\muu(\Sigma)$. The operator $\muu(\Sigma)$ is well-defined and does not depend on the choice of extension. 
See Section 2.3 in \cite{Street}
for more details and notice that our convention is different from \cite{Street}: 
the operators in \cite{Street} are defined by $p_1$ of the universal bundle instead of our $-\frac{1}{4}p_1$.
All those operators only depend on the homology class of $\Sigma$ in $Y$.

A cobordism $(W,S,\omega)$ between two admissible triples
$(Y,K_1,\omega_1)$ and $(Y_2,K_2, \omega_2)$ induces a homomorphism
$$
\II(W,S,\omega):\II(Y,K_1,\omega_1) \to \II (Y_2,K_2, \omega_2)
$$
which is only well-defined up to an overall sign in general. This sign ambiguity can be resolved with some extra requirements.
We will deal with this later when we need it.

Let $\mathcal{K}_m\subset S^1\times S^2$ be the submanifold  $S^1\times \{p_1,\cdots,p_m\} $ where $p_1,\cdots, p_m$ are $m$ points in $S^2$. Let
$R:=\{\text{pt}\}\times S^2\subset S^1\times S^2$
be a copy of the $S^2$-slice and $x\in S^1\times S^2\setminus \mathcal{K}_m$. 
It is clear that $R\cap \mathcal{K}_m$ consists of $m$ points. Therefore
when $m$ is odd, the triple
$(S^1\times S^2,\mathcal{K}_m,\emptyset)$ 
is admissible and $\II(S^1\times S^2,\mathcal{K}_m,\emptyset)$ can be defined.
Street studied 
 the singular instanton Floer homology of the pair $(S^1\times S^2, S^1\times \{p_1,\cdots,p_k\})$ and obtained the following.
\begin{PR}[{{\cite{Street}*{Corollary 2.9.11}}}] \label{eigenvalue}
Suppose $m\ge 3$. The only eigenvalue of $\mu(x)$ on  
$\II(S^1\times S^2,\mathcal{K}_m,\emptyset;\mathbb{C})$ is $2$. 
The eigenvalues of $\muu(R)$ on $\II(S^1\times S^2,K_m,\emptyset;\mathbb{C})$ are
$$
\mathcal{C}_m:=\{-(m-2),-(m-4),\cdots, (m-4), (m-2)\}
$$
Moreover, the generalized eigenspaces for the top eigenvalues $\pm (m-2)$ are one-dimensional.
\end{PR}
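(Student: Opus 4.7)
The plan is to compute $\II(S^1\times S^2,\mathcal{K}_m,\emptyset;\mathbb{C})$ via a Morse-Bott analysis of the Chern-Simons functional on this mapping torus, and then to identify $\mu(x)$ and $\muu(R)$ as explicit cohomological operations on the resulting critical moduli space. Since $S^1\times S^2$ is a mapping torus of the identity on $S^2$, a flat singular $SO(3)$-connection with involutive meridian holonomies along $\mathcal{K}_m$ essentially descends to a flat connection on the punctured sphere, and the critical set is expected to be controlled by the character variety
\[
M_m := \{(g_1,\ldots,g_m)\in SO(3)^m : g_i^2=1,\ g_1\cdots g_m=1\}/SO(3),
\]
which for $m\ge 3$ is a compact space of real dimension $2m-6$ carrying a K\"ahler structure from its presentation as a symplectic reduction (equivalently, a moduli of spherical $m$-gons modulo rotation).

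After a small holonomy perturbation making the Chern-Simons functional Morse-Bott along this critical set, the standard Kronheimer-Mrowka machinery produces a spectral sequence with $E_2$-page built from $H^\ast(M_m;\mathbb{C})$ (with any spectral-flow shifts from the $\mathbb{Z}/4$-grading) converging to $\II(S^1\times S^2,\mathcal{K}_m,\emptyset;\mathbb{C})$. Both $\mu(x)$ and $\muu(R)$ descend to operations on this spectral sequence, and I would identify their actions via the Atiyah-Bott universal-bundle description of characteristic classes on moduli of parabolic bundles: slicing $-\tfrac14 p_1(\mathbb{P})$ over the product geometry, $\mu(x)$ should reduce to multiplication by the scalar $2$ (matching the extremal meridian weight against the standard instanton normalisation, as in Mu\~{n}oz-type computations in the non-singular mapping-torus case), while $\muu(R)$, symmetrised over the two choices of extension at each puncture, should become a multiple of the K\"ahler class on $M_m$.

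Once these identifications are in place, the spectrum of $\muu(R)$ comes from hard Lefschetz: the K\"ahler-class operator on $H^\ast(M_m;\mathbb{C})$ underlies an $\mathfrak{sl}_2$-action whose weight spectrum is a symmetric arithmetic progression of even-step integers. Calibrating the multiplicative constant by direct computation in a base case fixes the spectrum as $\mathcal{C}_m=\{-(m-2),-(m-4),\ldots,(m-2)\}$, and the one-dimensionality of the extremal eigenspaces reflects the primitive classes of extremal Lefschetz weight being the fundamental class (and its Poincar\'{e} dual) of a top-dimensional stratum of $M_m$, together with a dimension/energy argument preventing higher Floer differentials from killing these extremal classes.

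The principal obstacle is pinning down these multiplicative constants rigorously and handling the orbifold structure of $M_m$ at the punctures. The averaging prescription defining $\muu$ interacts delicately with the parabolic structure at each of the $m$ singular meridians, and a naive Chern-Weil computation does not immediately produce the factor converting the symplectic class into the weight operator with spectrum $\mathcal{C}_m$. I would fix the normalisation by direct calculation in a small-$m$ base case and propagate to general $m$ via a cobordism that adds two punctures at a time (for instance, by creation of a standard unknot pair). A secondary concern is establishing degeneration of the Morse-Bott spectral sequence at least on the extremal $\muu(R)$-eigenspaces, which I expect to follow from the $\mathfrak{sl}_2$-symmetry and the naturality of $\muu(R)$ under the perturbation.
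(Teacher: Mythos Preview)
The paper does not supply its own proof of this proposition: it is quoted verbatim as \cite[Corollary 2.9.11]{Street} and treated as an input to the rest of the paper. So there is no argument in this paper to compare your proposal against.

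That said, your outline is a reasonable heuristic picture of why the statement should be true, but as written it is not a proof. You yourself flag the two essential difficulties, and neither is dispatched. First, the identification of $\muu(R)$ with a specific multiple of the K\"ahler class on the polygon space $M_m$ is the crux of the computation; without the constant you cannot conclude that the spectrum is exactly $\mathcal{C}_m$ rather than some dilation of it, and the ``fix it in a base case and propagate by cobordism'' idea presupposes exactly the sort of excision/functoriality machinery that in this paper is developed \emph{after} Proposition~\ref{eigenvalue} and in fact uses Proposition~\ref{eigenvalue} (via the one-dimensionality of the top eigenspace) as input. Second, degeneration of the Morse--Bott spectral sequence is asserted but not argued: appealing to ``$\mathfrak{sl}_2$-symmetry and naturality'' is not enough, since the Floer differential need not be $\mathfrak{sl}_2$-equivariant a priori, and the claim that the extremal classes survive requires an honest energy/index argument you have not supplied.

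There is also a smaller point: for $\omega=\emptyset$ the relevant representation variety sits in $SU(2)$ rather than $SO(3)$, and you should check that for odd $m$ the two agree (they do, but the parity matters and your description glosses over it). In short, the geometric picture you describe is the right one, but turning it into a proof requires exactly the hard work that Street's thesis carries out; your proposal identifies the target but does not hit it.
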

One immediate corollary of this proposition is the following 
(cf. \cite[Corollary 7.2]{KM:suture}).
\begin{COR}\label{specbound}
Suppose $(Y,K,\omega)$ is an admissible triple and $R\subset Y$ is an embedded sphere. If $|R\cap K|=m\ge 3$ is odd, then the
only eigenvalue of $\mu(x)$ is $2$ and
the eigenvalues of
$\muu(R)$ on $\II(Y,K,\omega;\mathbb{C})$ are contained in the set $\mathcal{C}_m$. Moreover, the generalized eigenspace of $\muu(R)$
with eigenvalue $l$ is isomorphic to the generalized eigenspace with eigenvalue $-l$.
\end{COR}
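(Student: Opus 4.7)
The plan is to reduce the corollary to Proposition \ref{eigenvalue} by a Kronheimer--Mrowka excision argument. The key observation is that $\mu(x)$ and $\muu(R)$ are defined via $-\tfrac14 p_1$ of the universal bundle evaluated on cycles supported in a neighborhood of $R$, so their spectra should be controlled by the same data computed on the standard local model $(S^1\times S^2, \mathcal{K}_m, \emptyset)$, where Proposition \ref{eigenvalue} gives the answer explicitly.

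Concretely, I would apply the excision theorem to the disjoint union $(Y, K, \omega) \sqcup (S^1\times S^2, \mathcal{K}_m, \emptyset)$ along the pair of $m$-punctured 2-spheres $R \sqcup R_2$, where $R_2 = \{\mathrm{pt}\}\times S^2$. Both triples are admissible (the second precisely because $m$ is odd), $R$ may be isotoped to be disjoint from $\omega$, and the local singular bundle data match along the two spheres. Since $S^1\times S^2$ cut along $R_2$ is a trivial cylinder $[0,1]\times S^2$ threaded by $m$ vertical strands, the cut-and-paste amounts to inserting a trivial collar, and the two resulting admissible triples can be identified with $(Y, K, \omega)$ and $(S^1\times S^2, \mathcal{K}_m, \emptyset)$. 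The excision cobordism therefore induces an isomorphism
\begin{equation*}
\II(Y, K, \omega; \mathbb{C}) \otimes \II(S^1\times S^2, \mathcal{K}_m, \emptyset; \mathbb{C}) \xrightarrow{\cong} \II(Y, K, \omega; \mathbb{C}) \otimes \II(S^1\times S^2, \mathcal{K}_m, \emptyset; \mathbb{C})
\end{equation*}
that transports the action of $\muu(R)$ on the first tensor factor to an operator supported on the second factor in terms of $\muu(R_2)$, and similarly for $\mu(x)$. Tracking eigenvalues through this transport forces the spectrum of $\muu(R)$ on $\II(Y, K, \omega;\mathbb{C})$ to be contained in the spectrum $\mathcal{C}_m$ of $\muu(R_2)$ given by Proposition \ref{eigenvalue}, and analogously the only eigenvalue of $\mu(x)$ is $2$.

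For the symmetry of eigenspaces, I would exhibit an orientation-reversing involution of $(S^1\times S^2, \mathcal{K}_m, \emptyset)$ that preserves $R_2$ setwise but reverses its orientation (for instance, the involution induced by complex conjugation on the $S^1$-factor, which fixes two $S^2$-slices of $\mathcal{K}_m$). The resulting automorphism on $\II(S^1\times S^2, \mathcal{K}_m, \emptyset;\mathbb{C})$ conjugates $\muu(R_2)$ to $-\muu(R_2)$, identifying its generalized eigenspaces for $\pm l$ in the model; transporting this identification through the excision isomorphism yields the corresponding isomorphism for $\muu(R)$ on $\II(Y, K, \omega;\mathbb{C})$. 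The main obstacle throughout is the bookkeeping in the excision step: one must verify that the two-fold ambiguity at each point of $R \cap K$ (over which $\muu$ is averaged) is compatible with the cut-and-paste, and that the orbifold $SO(3)$-bundles and cobordism sign data glue consistently on both sides.
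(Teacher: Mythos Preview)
Your first part---reducing the eigenvalue constraint to the model $(S^1\times S^2,\mathcal{K}_m,\emptyset)$ via an excision cobordism---is essentially what the paper does: it simply cites the proof of Corollary~7.2 in \cite{KM:suture}, which is exactly this kind of argument. One caution: Theorem~\ref{Sexcision} as stated only asserts an isomorphism on the \emph{top} eigenspaces, so the displayed isomorphism on the full tensor product is not literally a consequence of it. The actual Kronheimer--Mrowka argument does not need the excision cobordism map to be an isomorphism; it only uses that the map exists and that in the cobordism the two copies of $R$ are homologous, so any polynomial relation $p(\muu(R_2),\mu(x))=0$ holding on the model is transported to $p(\muu(R),\mu(x))=0$ on $\II(Y,K,\omega;\mathbb{C})$. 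Your sketch is compatible with this once you drop the word ``isomorphism.''

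Your second part, however, takes a genuinely different and unnecessarily delicate route, and as written it has a gap. The paper's argument is a one-line algebraic observation: $\muu(R)$ is a degree-$2$ operator on a $\mathbb{Z}/4$-graded vector space, and the linear map $\alpha\mapsto(\sqrt{-1})^{\deg\alpha}\alpha$ conjugates any degree-$2$ operator to its negative, hence carries the generalized $l$-eigenspace isomorphically onto the generalized $(-l)$-eigenspace. This works directly on $\II(Y,K,\omega;\mathbb{C})$ with no reference to the model or to excision. By contrast, your involution lives only on $(S^1\times S^2,\mathcal{K}_m,\emptyset)$; there is no reason for a corresponding involution to exist on an arbitrary $(Y,K,\omega)$, and ``transporting through the excision isomorphism'' does not produce one---at best it gives an automorphism of $\II(Y)\otimes\II(S^1\times S^2)$ acting on the second factor, which says nothing about the eigenspace dimensions on $\II(Y)$ alone. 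Replace this step with the grading argument and the proof is complete.
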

The first part is just \cite[Lemma 3.2.3]{Street}.
The second part is because $\muu(R)$ is a degree $2$ operator on a
$\mathbb{Z}/4$ graded space: the isomorphism is given by $\alpha\mapsto (\sqrt{-1})^{\deg\alpha}\alpha$ where $\alpha$ is a homogeneous element in the Floer homology group.

\begin{DEF}
Let $(Y,K,\omega)$ be an admissible triple and $R\subset Y$ be an embedded 2-sphere. Suppose $R$ intersects $K$ transversely at $m$ points. We define
\begin{equation*}
  \II(Y,K,\omega|R)\subset \II(Y,K,\omega;\mathbb{C})
\end{equation*}
to be the generalized eigenspace for $\muu (R)$ for the eigenvalue $(m-2)$. More generally, if $\Sigma\subset Y$ is an embedded surface,
we use  
\begin{equation*}
  \II(Y,K,\omega)_{\muu(\Sigma),n}\subset \II(Y,K,\omega;\mathbb{C})
\end{equation*}
to denote the generalized eigenspace for the operator $\muu(\Sigma)$ for the eigenvalue $n$. The operator $\muu(\Sigma)$ can also be 
replaced by $\mu(\Sigma)$ if $\Sigma\cap K=\emptyset$ or $\mu(x)$ where $x\in Y\setminus K$. 
\end{DEF}
In this definition we do not require $m$ to be odd.

Let $(Y_1, K_1,\omega_1)$ and $(Y_2,K_2,\omega_2)$ be two admissible triples. Suppose $R_i\subset Y_i$ ($i=1,2$) is an embedded sphere such that
$R_1\cdot \omega_1 =R_2\cdot \omega_2$ mod $2$ and $|R_1\cap K_1|=|R_2\cap K_2|=m$ is an odd number. All the intersections are supposed to be transversal.
Notice that if $|R_i\cap K_i|$ is odd, then $R_i$ is non-separating. Choose a diffeomorphism $h:(R_1,R_1\cap K_1)\to (R_2, R_2\cap K_2)$. Cut $Y_i$ along $R_i$ ($i=1,2$) and
identify the four boundary components by $h$ then we obtain a new admissible triple $(\widetilde{Y},\tilde{K},\widetilde{\omega})$. There are two embedded
spheres $\widetilde{R}_1$ and $\widetilde{R}_2$ which are homologous. We pick one of them and denote it by $\widetilde{R}$. Street's excision theorem is
the following.
\begin{THE}[{{\cite{Street}*{Theorem 3.2.4}}}]\label{Sexcision}
If $(\widetilde{Y},\tilde{K},\widetilde{\omega},\widetilde{R})$ is obtained as above, then
\begin{equation*}
 \II(\widetilde{Y},\tilde{K},\widetilde{\omega}|\widetilde{R})\cong 
 \II(Y_1, K_1,\omega_1|R_1)\otimes \II(Y_2, K_2,\omega_2|R_2)
\end{equation*}
\end{THE}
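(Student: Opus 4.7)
The plan is to follow the classical Floer excision argument, adapted to the singular orbifold setting. I would construct a four-dimensional cobordism $W$ from $(Y_1 \sqcup Y_2, K_1 \sqcup K_2, \omega_1 \sqcup \omega_2)$ to $(\widetilde Y, \widetilde K, \widetilde \omega)$ that geometrically realizes the cut-and-paste, show that the induced Floer homomorphism respects the $\muu$-eigenvalue decomposition, and then invert it on the top eigenspaces via a reverse cobordism and a neck-stretching argument.

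The cobordism is built as follows. Take the cylinder $(Y_1 \sqcup Y_2) \times [0,1]$ and excise a tubular neighbourhood of $(R_1 \sqcup R_2) \times \{1\}$. The resulting outgoing boundary has four components, each identified via $h$ with the common model sphere $R$. Glue in the product $R \times P$, where $P$ is a pair of pants with outgoing circle $c_0$ and incoming circles $c_1, c_2$; the outgoing end of $W$ is then $\widetilde Y$. The link cobordism is $(K_1 \sqcup K_2) \times [0,1]$ extended by the product arcs $\{p_1, \ldots, p_m\} \times P \subset R \times P$, which is possible because $|R_1 \cap K_1| = |R_2 \cap K_2| = m$ and $h$ matches marked points with marked points. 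The class $\omega$ extends across $R \times P$ using the compatibility $R_1 \cdot \omega_1 \equiv R_2 \cdot \omega_2 \pmod 2$. Since the three spheres $R_1$, $R_2$, and $\widetilde R$ are each homologous in $H_2(W)$ to the fibre class of $R \times P$, the induced cobordism map
\begin{equation*}
F \colon \II(Y_1,K_1,\omega_1;\mathbb{C}) \otimes \II(Y_2,K_2,\omega_2;\mathbb{C}) \longrightarrow \II(\widetilde Y,\widetilde K,\widetilde \omega;\mathbb{C})
\end{equation*}
intertwines both $\muu(R_1)\otimes 1$ and $1\otimes \muu(R_2)$ with $\muu(\widetilde R)$. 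Consequently $F$ restricts to a map $F_{\mathrm{top}}$ between the generalized $(m-2)$-eigenspaces for these operators, that is, to a map $\II(Y_1,K_1,\omega_1|R_1)\otimes \II(Y_2,K_2,\omega_2|R_2) \to \II(\widetilde Y,\widetilde K,\widetilde \omega|\widetilde R)$.

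A symmetric construction using a pair of pants with the roles of incoming and outgoing circles reversed produces a map $G$ in the opposite direction. The composition $G\circ F$ corresponds to the cobordism obtained by gluing the two pairs of pants along their shared boundary, which contains an $S^1 \times S^2$-neck whose singular set is precisely $\mathcal{K}_m$. By Proposition \ref{eigenvalue}, the generalized eigenspace of $\muu(R)$ on $\II(S^1\times S^2,\mathcal{K}_m,\emptyset;\mathbb{C})$ with eigenvalue $m-2$ is one-dimensional, and the only eigenvalue of $\mu(x)$ is $2$. Neck-stretching therefore identifies $G\circ F|_{\mathrm{top}}$ with a scalar multiple of the identity, and the same applies to $F\circ G|_{\mathrm{top}}$; a dimension and orientation count on the orbifold ASD moduli over $R\times P$ with cylindrical ends verifies that this scalar is nonzero.

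The main obstacle is the analytic input of this last step: confirming that the relative Donaldson-type invariant associated to $R \times P$, with orbifold structure along $\{p_i\}\times P$ and appropriate $\omega$-extension, factors through the one-dimensional top eigenspace of $\II(S^1\times S^2,\mathcal{K}_m;\mathbb{C})$ and produces a nonzero scalar. This requires transversality for orbifold instantons on $R\times P$ with cylindrical ends, careful orientation bookkeeping so that the composition does not vanish accidentally, and a consistent choice of extension of $\omega$ and of the $\mu^{orb}$-extensions across each singular strand. All of these steps have close analogues in the classical Floer excision literature and in Kronheimer--Mrowka's singular-bundle excision arguments; assembling them in the present setting yields the claimed isomorphism.
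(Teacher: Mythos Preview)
Your overall strategy is exactly the standard excision argument and matches what the paper indicates (the paper does not reprove the theorem but cites Street and sketches the cobordism): build an explicit cobordism, show the induced map intertwines the $\muu$-operators, construct the reverse cobordism, and control the compositions by neck-stretching together with the one-dimensionality of the top eigenspace from Proposition~\ref{eigenvalue}.

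There is, however, a genuine topological error in your construction of the cobordism. You write that after excising neighbourhoods of $R_1,R_2$ the outgoing boundary has \emph{four} sphere components, and then you glue in $R\times P$ with $P$ a pair of pants. A pair of pants has only three boundary circles, so it cannot cap four boundary spheres; the piece you describe does not produce a manifold with outgoing end $\widetilde Y$. The correct local model, as the paper states explicitly, is the product of $(S^2,\{m\ \text{points}\})$ with the \emph{saddle surface} of Figure~\ref{excision-co}: a genus-zero surface with four boundary circles, two incoming (corresponding to $R_1,R_2$) and two outgoing (corresponding to $\widetilde R_1,\widetilde R_2$), realising the swap in how the cut pieces of $Y_1,Y_2$ are reglued. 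Concretely, one removes $\mathrm{nbhd}(R_i)\times I$ from $Y_i\times I$ and fills with $R\times S$, where $S$ is this saddle, so that the bottom end is $Y_1\sqcup Y_2$ and the top end is $\widetilde Y$.

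Once the cobordism is corrected, your remaining steps go through: in $R\times S$ all four boundary spheres are homologous to the fibre class, so the intertwining with $\muu(R_i)$ and $\muu(\widetilde R)$ holds; the reverse saddle gives the inverse candidate; and the composite factors through $\II(S^1\times S^2,\mathcal{K}_m,\emptyset|R)\cong\mathbb{C}$, yielding a nonzero scalar.
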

The isomorphism is induced by a cobordism. Part of the cobordism
is the product of $(S^2, \{m~\text{points}\})$ and the saddle surface in Figure \ref{excision-co}.
In Figure \ref{excision-co}, the two red dots represent $R_1$ and $R_2$ and the two green dots represent $\widetilde{R}_1$ and  $\widetilde{R}_2$. In the complement of 
Figure \ref{excision-co}, the cobordism is the product cobordism.

This theorem is an analogue of \cite{KM:suture}*{Theorem 7.7} where they do excision along genus $g$ surfaces (assuming $K$ is empty). The excision theorem along
tori is due to Floer \cite{Floer-surgery}. The proof of this theorem relies on the one-dimensionality of the top (generalized) eigenspace, as stated in
Proposition \ref{eigenvalue}.

\begin{figure}
\centering
\begin{tikzpicture}

\draw[thick] (-1,0) to [out=280,in=180] (0,-2) to [out=0,in=260] (1,0);

\draw[thick] (-2,1) to [out=350,in=90] (-1,0) to [out=270,in=30] (-2.5,-1.5) to   (-2.5,-4);

\draw[thick] (-2.5,-4) to [out=15,in=165] (2.5,-4) ;

\draw[thick] (2,1) to [out=190,in=90] (1,0) to [out=270,in=150] (2.5,-1.5) to    (2.5,-4);

\draw[dashed, thick,dash pattern=on 0.08cm off 0.1cm] (-2,-2.5) to [out=350,in=190](2,-2.5);

\draw[thick,dash pattern=on 2.23cm off 0.1cm on 0.1cm off 0.1cm on 0.1cm off 0.1cm on 0.1cm off 0.1cm on 0.1cm off 0.1cm on 0.1cm
 off 0.1cm on 0.1cm off 0.1cm on 0.1cm] (-2,1) to  (-2,-2.5);
\draw[thick,dash pattern=on 2.23cm off 0.1cm on 0.1cm off 0.1cm on 0.1cm off 0.1cm on 0.1cm off 0.1cm on 0.1cm off 0.1cm on 0.1cm
 off 0.1cm on 0.1cm off 0.1cm on 0.1cm off 0.1cm ] (2,1) to  (2,-2.5);

\draw[fill, red] (0,-3.63) circle (1pt); \node[below] at (0,-3.63) {$R_1$};
\draw[fill, red] (0.05,-2.685) circle (1pt); \node[below] at (0.05,-2.685) {$R_2$};
\draw[fill, green] (1,0) circle (1pt);\node[left] at (1,0) {$\widetilde{R}_2$};
\draw[fill, green] (-1,0) circle (1pt);\node[right] at (-1,0) {$\widetilde{R}_1$};

\end{tikzpicture}
\caption{A schematic picture for the excision cobordism}\label{excision-co}
\end{figure}
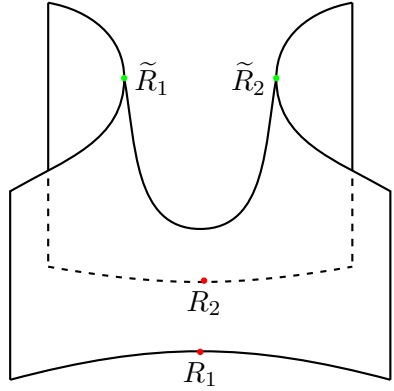

 Let $u_i$ be an arc in a $S^2$-slice in $S^1\times S^2$
that  joins two circles in $\mathcal{K}_m$. 
The triple $(S^1\times S^2, \mathcal{K}_m, u_1+\cdots u_l)$
is still admissible if we assume each circle in $\mathcal{K}_m$ is touched by at most one arc. When $m$ is odd,
we can apply the excision theorem to two copies of this triple and obtain the following.
\begin{PR}\label{S1S2u}
We have
\begin{equation*}
  \II(S^1\times S^2, \mathcal{K}_m, u_1+\cdots u_l|R)\cong \II(S^1\times S^2, \mathcal{K}_m, \emptyset|R)=\mathbb{C}
\end{equation*}
where $R$ is a $S^2$-slice disjoint from those arcs $u_1,\cdots, u_l$ and $m$ is an odd number.
\end{PR}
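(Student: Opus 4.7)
The plan is to apply Street's excision theorem (Theorem \ref{Sexcision}) to two copies of the triple $(S^1\times S^2, \mathcal{K}_m, u_1+\cdots+u_l)$, taking both $R_1$ and $R_2$ to be $R$ and the gluing diffeomorphism $h$ to be the identity on $(R, R\cap \mathcal{K}_m)$. The hypotheses are immediate: $|R\cap \mathcal{K}_m|=m$ is odd by assumption, and $R\cdot (u_1+\cdots+u_l) = 0$ since $R$ is chosen disjoint from the arcs; admissibility of the triple follows from $R$ meeting $\mathcal{K}_m$ in an odd number of points. Setting $A := \II(S^1\times S^2, \mathcal{K}_m, u_1+\cdots+u_l|R)$, the excision theorem will yield an isomorphism
\[
  \II(\widetilde{Y}, \tilde{K}, \tilde{\omega}|\widetilde{R}) \cong A \otimes A.
\]

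I next identify $(\widetilde{Y}, \tilde{K}, \tilde{\omega})$. Cutting each $S^1\times S^2$ along $R$ produces a copy of $[0,1]\times S^2$; regluing the four resulting boundary spheres crosswise by the identity recovers $\widetilde{Y}\cong S^1\times S^2$, and under the matching of punctures $\tilde{K} = \mathcal{K}_m$. The $1$-manifold $\tilde{\omega}$ equals $\sum_i (u_i + u_i')$, where $u_i'$ denotes the image of $u_i$ in the second copy. After gluing, $u_i$ sits in one $S^2$-slice of $\widetilde{Y}$ and $u_i'$ sits in a different slice, with $\widetilde{R}_1$ and $\widetilde{R}_2$ separating them.

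The crucial step is to show that $\tilde{\omega}$ represents $0$ in $H_1(\widetilde{Y}, \mathcal{K}_m; \mathbb{Z}/2)$, so that the associated singular bundle data agree with those of the trivial $\omega = \emptyset$ and hence the two Floer groups are canonically identified. For each $i$, regard $u_i\subset \{t_1\}\times S^2$ and $u_i'\subset \{t_2\}\times S^2$ as the same arc in $S^2$ at two different times $t_1\ne t_2\in S^1$, and let $\alpha\subset S^1$ be an arc from $t_1$ to $t_2$; the rectangle $\alpha\times u_i\subset \widetilde{Y}$ is then an embedded disk whose boundary is $u_i$, $u_i'$, and two arcs lying on the components of $\mathcal{K}_m$ containing $\partial u_i$. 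Hence $u_i+u_i' = 0$ in $H_1(\widetilde{Y}, \mathcal{K}_m; \mathbb{Z}/2)$; summing over $i$ gives $[\tilde{\omega}]=0$. Proposition \ref{eigenvalue} then yields
\[
  \II(\widetilde{Y}, \tilde{K}, \tilde{\omega}|\widetilde{R}) \cong \II(S^1\times S^2, \mathcal{K}_m, \emptyset|\widetilde{R}) \cong \mathbb{C}.
\]

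Combining the two displayed isomorphisms forces $A\otimes A\cong \mathbb{C}$, so $\dim_\mathbb{C} A = 1$ and $A\cong \mathbb{C}$ as claimed. The main obstacle is the geometric step of constructing the interpolating disks that trivialise $\tilde{\omega}$ modulo $2$ in the glued manifold; once this is done, the remainder is bookkeeping with Street's excision and a one-line dimension count. Conceptually the argument is a doubling trick: although $u_1+\cdots+u_l$ need not itself be null-homologous on a single $(S^1\times S^2, \mathcal{K}_m)$, its double becomes trivial after the cut-and-paste.
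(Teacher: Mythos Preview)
Your proposal is correct and follows exactly the approach indicated by the paper, which proves this proposition in a single sentence (``Apply the excision theorem to two copies of this triple''). You have carefully supplied the details the paper omits: the identification of the glued triple, the construction of the rectangles $\alpha\times u_i$ showing $\tilde{\omega}$ is trivial in $H_1(\widetilde{Y},\mathcal{K}_m;\mathbb{Z}/2)$, and the dimension count $A\otimes A\cong\mathbb{C}\Rightarrow A\cong\mathbb{C}$.
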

From now on we fix an arc $u$ joining two connected components of $\mathcal{K}_m$. The critical points
of the Chern-Simons functional for the triple $(S^1\times S^2, \mathcal{K}_2,u)$ consist of $SU(2)$ representations (modulo conjugacy)
of $\pi_1(S^1\times S^2\setminus(\mathcal{K}_2\cup u))$
satisfying that the holonomy on the meridians of $\mathcal{K}_2$ is conjugate to $\mathbf{i}\in SU(2)$ and the holomony on the meridian
of $u$ is $-1$.
The complement of $\mathcal{K}_2$ in $S^1\times S^2$ is $S^1\times D_1$
where $D_1$ is an open disk punctured at the origin. We also assume the arc $u$ 
lies in a $D_1$-slice
 and goes from the origin to the boundary. Let $c$ be a circle in $D_1$ around
 the origin which intersects $u$ at a single point. 
The complement of $u$ in $S^1\times D_1$ deformation retracts to the punctured
torus $S^1\times c -\{\pt\}$. The fundamental group of $S^1\times c -\{\pt\}$
is a free group of two generators $[S^1\times \{\pt\}]$ and $[c]$.
Suppose $J_1$ and $J_2$ are holonomies of the two generators respectively.
Then our holonomy condition requires that
$$
[J_1,J_2]=-1.
$$
By some linear algebra one can show that up to a conjugation, we must have
$J_1=\mathbf{i}$ and  $J_2=\mathbf{j}$. Therefore this is a unique critical 
point in this situation.

Next we want to show that this unique critical point $\rho$ is non-degenerate.
According to \cite[Lemma 3.13]{KM:YAFT}, the kernel of  
$\text{Hessian}_{\rho}(CS)$ is isomorphic to the kernel of the map
$$
H^1(S^1\times S^2-\mathcal{K}_2,\text{ad}\rho)
\to H^1(m_1\cup m_2, \text{ad}\rho|_{m_1\cup m_2})
$$ 
where $m_1,m_2$ are meridians of the two components of $\mathcal{K}_2$ and 
$\text{ad}\rho$ is the associated $\mathfrak{su}(2)$ bundle
viewed as a local system. It is a direct calculation to show that this kernel is zero. Hence
$\rho$ is non-degenerate. 
Therefore $\II(S^1\times S^2, \mathcal{K}_2, u)\cong\mathbb{Z}$ and $\mu(\{\pt\}\times S^2)$ is $0$ since it is a degree 2 operator
on a $\mathbb{Z}/4$-graded space.

In general, the complement of $\mathcal{K}_m$ is $S^1\times D_{m-1}$ where $D_{m-1}$ is an open disk punctured at $(m-1)$ points. In this case, there
are $2^{m-2}$ critical points. $S^1\times \{\pt\}$ and the small circle around the origin are still mapped to $\mathbf{i}$
and $\mathbf{j}$ respectively. The small circles
around the new punctures are mapped to $\pm \mathbf{i}$. We use $R_m$ to denote the $S^2$-slice in $(Y,\mathcal{K}_m,u)$.
From Proposition \ref{S1S2u} and the symmetry of eigenvalues we know the eigenvalues of
$\muu(R_3)$ are $\pm 1$. This implies the degrees of two critical points described above differ by $2$. Otherwise
$\muu(R_3)$ must be $0$ as an operator of degree $2$. In particular, there is no differential in the chain complex, we have
$$
\II(S^1\times S^2, \mathcal{K}_3, u)\cong\mathbb{Z}^2
$$
The two generators are interchanged by the diffeomorphism which is defined by reflecting the $S^1$ factor.

We also need to use another type of excision theorem. Let $(Y,K,\omega)$ be an admissible triple with two connected components
 and $T_1, T_2$ be
  two disjoint separating tori  which are in different components of $Y$, 
  disjoint from $K$ and have non-zero pairing with $\omega$ ($\text{mod}~2$). Pick a diffeomorphism $h:T_1\to T_2$.
Cut along those two tori and
re-glue by $h$ to obtain a new admissible triple $(\widetilde{Y}, \widetilde{K},\widetilde{\omega})$. The following theorem is a special case of
\cite{KM:Kh-unknot}*{Theorem 5.6}:
\begin{THE}\label{Texcision}
Let $(Y,K,\omega)$ and $(\widetilde{Y}, \widetilde{K},\widetilde{\omega})$ be given as above. Then there is a cobordism which induces an isomorphism  
\begin{equation*}
 \II(Y,K,\omega)\cong \II(\widetilde{Y}, \widetilde{K},\widetilde{\omega})
\end{equation*}
\end{THE}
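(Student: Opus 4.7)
The plan is to reduce Theorem \ref{Texcision} to the general singular-instanton torus excision theorem of \cite{KM:Kh-unknot}, of which the statement is explicitly a special case. The hypotheses needed for that theorem are that the two tori are disjoint from the link and that the singular bundle data $\check P$ restricts nontrivially to each torus. Both are guaranteed by our assumptions $T_1, T_2 \subset Y \setminus K$ and that each $T_i$ has odd $\mathrm{mod}\,2$ pairing with $\omega$, since the latter is equivalent to $w_2(\check P)|_{T_i}\neq 0$.

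I would first verify admissibility of $(\widetilde Y, \widetilde K, \widetilde \omega)$. After an isotopy of $h$, we may assume that $h$ sends $\omega \cap T_1$ to $\omega \cap T_2$ (the two intersection sets have the same mod-$2$ cardinality), so that $\omega$ glues up to a well-defined $\widetilde \omega$ on $\widetilde Y$. Since each component of $Y$ is admissible, a surface witnessing admissibility can be chosen disjoint from a neighborhood of $T_1 \cup T_2$, and therefore descends to $\widetilde Y$; so $\II(\widetilde Y, \widetilde K, \widetilde\omega)$ is defined.

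Next, I would construct the excision cobordism, following Floer's pair-of-pants template adapted to the singular setting. Let $M = Y \setminus \nu(T_1 \cup T_2)$, which has four torus boundary components. The cobordism $W : Y \to \widetilde Y$ is built from $M \times [0,1]$ by capping off the four neck-tubes in $\partial M \times [0,1]$ using a copy of $T^2 \times P$, where $P$ is a four-holed sphere whose boundary circles are matched so that
\begin{equation*}
W\bigr|_{t=0} \;=\; Y, \qquad W\bigr|_{t=1} \;=\; \widetilde Y.
\end{equation*}
The link and $\omega$ extend to $(S, \Omega) \subset W$ by taking products on $M \times [0,1]$ and inserting auxiliary arcs in $T^2 \times P$ that encode the non-trivial $w_2$-class on the neck tori and realize $\widetilde\omega$ on the far end.

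The induced cobordism map $\II(W, S, \Omega): \II(Y, K, \omega) \to \II(\widetilde Y, \widetilde K, \widetilde \omega)$ is the candidate isomorphism, with the mirror pair-of-pants providing the inverse. Verifying that the two composites are the identity reduces, via a standard neck-stretching argument, to computing the Floer homology of $T^3$ carrying the non-trivial $SO(3)$-bundle inherited from the $w_2 \neq 0$ class on a neck $T^2$, and showing that the top eigenspace of the $\mu$-operator associated with that $T^2$ is one-dimensional. This torus analogue of Proposition \ref{eigenvalue} is the main technical ingredient of \cite{KM:Kh-unknot}; were one proving the theorem from scratch, this rank-one computation would be the principal obstacle, but given it, Theorem \ref{Texcision} follows at once.
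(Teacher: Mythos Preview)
Your proposal is correct and matches the paper's approach: the paper does not give an independent proof but simply states that Theorem~\ref{Texcision} is a special case of \cite[Theorem~5.6]{KM:Kh-unknot}. Your additional sketch of the pair-of-pants excision cobordism and the rank-one torus computation accurately outlines the content of that cited result, and goes beyond what the paper itself records.
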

\begin{rmk}
When $Y$ is connected, the above theorem still holds if we use complex coefficients and take the generalized 
eigenspace for $\mu(x)$ with eigenvalue $2$ on the right hand side. Indeed, this is
a special case of Theorem 7.7 in \cite{KM:suture}.
\end{rmk}
Pick a boundary torus $T$ of the regular neighborhood of one component of $\mathcal{K}_3$ that is touched by $u$.
Then we have $u\cdot T=1$. 
Take two copies of the triple $(Y,\mathcal{K}_3,u)$, cut along the two copies of $T$ and re-glue, we obtain two
admissible triples $(S^1\times S^2, \mathcal{K}_4, u)$ and $(S^1\times S^2, \mathcal{K}_2, u)$. By Theorem \ref{Texcision}, we have
\begin{equation*}
  \II(S^1\times S^2, \mathcal{K}_4, u)\otimes \II(S^1\times S^2, \mathcal{K}_2, u)\cong \II(S^1\times S^2, \mathcal{K}_3, u)\otimes \II(S^1\times S^2, \mathcal{K}_3, u)
\end{equation*}
So we conclude $\II(S^1\times S^2, \mathcal{K}_4, u)\cong\mathbb{Z}^4$. Moreover, the above isomorphism is induced by a cobordism from
$(S^1\times S^2, \mathcal{K}_4, u)\sqcup (S^1\times S^2, \mathcal{K}_2, u)$ to 
$(S^1\times S^2, \mathcal{K}_3, u)\sqcup (S^1\times S^2, \mathcal{K}_3, u)$
(see \cite{KM:Kh-unknot}*{Theorem 5.6} for more details). 
In this cobordism,
$[R_4]+[R_2]$ is homologous to $[R_3]+[R_3']$ where $R_3$ and $R_3'$ denote the $S^2$-slices in the two copies of $(S^1\times S^2, \mathcal{K}_3, u)$ respectively. Hence
the isomorphism (after complexifying both sides) intertwines the map $\muu(R_4)\otimes 1 +1\otimes \muu (R_2)$ on the incoming end with
\begin{equation*}
  \muu(R_3)\otimes 1 +1\otimes \muu (R_3)
\end{equation*}
on the outgoing end. Instead of using $R_3'$,
we abuse the notation $R_3$  when it does not cause any confusion. Since $\muu (R_2)=0$, the eigenspace decomposition of
$\mu(R_4)$ is also clear from the isomorphism. In particular the top  generalized eigenspace (with eigenvalue $\pm 2$ in this case) is again 1-dimensional. 
Iterating this
excision procedure, we have
\begin{equation*}
  \II(S^1\times S^2, \mathcal{K}_m, u)\cong \II(S^1\times S^2, \mathcal{K}_3, u)^{\otimes (m-2)}\cong\mathbb{Z}^{2^{(m-2)}}
\end{equation*}
This isomorphism (again after complexifying both sides) intertwines $\muu(R_m)$ with
\begin{equation*}
  \muu(R_3)\otimes 1 \otimes \cdots \otimes 1 + \cdots  + 1 \otimes \cdots \otimes 1 \otimes \muu(R_3)
\end{equation*}
So eigenspace decomposition is also clear and the top generalized eigenspace (with eigenvalue $\pm (m-2)$) is 1-dimensional. This time $m$ could be even.

\subsection{Tangle Floer homology}
The following definition of instanton Floer homology for balanced admissible tangles is 
due to Street \cite[Definition 3.3.5]{Street}. 
\begin{DEF}
Suppose $T\subset I\times D$ is a balanced admissible tangle. Glue another pair of $(I\times D, I\times \{p_1,p_2\})$ to $(I\times D, T)$
by identifying $\partial D\times I$ in the obvious way, we
obtain a pair $(I\times S^2, T\sqcup I\times \{p_1,p_2\})$. Identify the top and bottom 2-spheres
(denoted by $R^{\pm}$) by an arbitrary diffeomorphism that closes up $T$ into a link $\hat{T}$
and $I\times \{p_1,p_2\}$ into product circles $S^1\times \{p_1,p_2\}$,
the resulting pair is
 $(S^1\times S^2, \hat{T}\sqcup \mathcal{K}_2)$.
 The instanton Floer homology for the tangle $T$ is defined as
 \begin{equation*}
   \THI (T):=\II(S^1\times S^2, \hat{T}\sqcup \mathcal{K}_2, u|R)
 \end{equation*}
where  $u$ is an arc joining two connected components of
$\mathcal{K}_2$ as before and $R$ is the resulting 2-sphere after identifying $R^{\pm}$. When
$|T\cap D^+|$ is odd, $(S^1\times S^2, \hat{T},\emptyset)$ is admissible.
The \emph{odd tangle Floer homology} of $T$ can be defined as
 \begin{equation*}
   \THI^{\text{odd}} (T):=\II(S^1\times S^2, \hat{T}, \emptyset)
 \end{equation*}
\end{DEF}
When $|T\cap D^+|=|T\cap D^-|=l$ is odd, using Theorem \ref{Sexcision} and the fact $\II(S^1\times S^2,\mathcal{K}_{l+2},\emptyset|R_{l+2})$
is 1-dimensional, one can show that $\THI(T)$ (or $\THI^{\text{odd}} (T)$)
 does not depend on the choice of the diffeomorphism $h$ as in \cite{KM:suture}*{Corollary 4.8}. 
If $l$ is even,
we can apply the torus excision to $$(S^1\times S^2, \hat{T}\sqcup \mathcal{K}_2, u)\sqcup (S^1\times S^2, \mathcal{K}_3, u) $$
as we did in the calculation of $\II(S^1\times S^2, \mathcal{K}_m, u)$ to obtain
\begin{equation*}
  \II(S^1\times S^2, \hat{T}\sqcup \mathcal{K}_2, u|R)\cong \II(S^1\times S^2, \hat{T}\sqcup \mathcal{K}_3, u|R')
\end{equation*}
where both $R$ and $R'$ are the $S^2$-slices.
Then the problem is reduced to previous case.

Next we state a non-triviality result due to Street.
\begin{THE}\label{Street-detection}
Let $T\subset I\times D$ be a vertical  balanced admissible tangle. Then $\THI(T)\cong \mathbb{C}$ 
if and only if there is an orientation-preserving diffemorphism of pairs
\begin{equation*}
  F:(I\times D, I\times \{p_1,\cdots, p_m\} ) \to (I\times D, T)
\end{equation*}
such that $F(D^{\pm})=D^{\pm}$ and $F(I\times \partial D)=I\times \partial D$.
\end{THE}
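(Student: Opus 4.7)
My plan is to prove the two directions separately, with the nontrivial direction relying on identifying $\THI(T)$ with a sutured instanton Floer homology and invoking a product-detection theorem.

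For the ``if'' direction, assume such an $F$ exists. Extending $F$ by the identity across the auxiliary $(I\times D,I\times\{p_1,p_2\})$ glued on to form the closure, and across whichever identification of $D^\pm$ is used, one obtains a diffeomorphism of triples $(S^1\times S^2,\hat{T}\sqcup\mathcal{K}_2,u)\cong(S^1\times S^2,\mathcal{K}_{m+2},u)$ sending the $S^2$-slice to $R_{m+2}$. The iterated torus excision computation carried out before the theorem shows that the top generalized eigenspace of $\muu(R_{m+2})$ on $\II(S^1\times S^2,\mathcal{K}_{m+2},u;\mathbb{C})$ is one-dimensional, so $\THI(T)\cong\mathbb{C}$.

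For the converse, my plan runs in four steps. \emph{Step 1 (sutured model).} Let $M=(I\times D)\setminus N(T)$. Equip $M$ with a sutured structure $\gamma$ consisting of a single curve $\partial D\times\{0\}$ on the vertical annulus $\partial D\times I$, together with a pair of meridional sutures on each torus component of $\partial N(T)$ splitting it into two annuli, one attached to $D^+\setminus\nu\{p_i\}$ and one to $D^-\setminus\nu\{p_i\}$. Then $R_\pm(\gamma)\cong D\setminus\nu\{p_1,\ldots,p_m\}$, and $(M,\gamma)$ is a product sutured manifold precisely when $T$ is equivalent (via a diffeomorphism as in the theorem) to the trivial tangle $I\times\{p_1,\ldots,p_m\}$. \emph{Step 2 (excision identification).} I would identify $\THI(T)\cong\SHI(M,\gamma)$. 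The closure defining $\THI(T)$ adds $(I\times D,I\times\{p_1,p_2\})$ and glues $D^\pm$, while the closure defining $\SHI(M,\gamma)$ attaches an auxiliary surface cross $S^1$ along the sutures; the two closed triples should be related by a sequence of sphere excisions (Theorem~\ref{Sexcision}) and torus excisions (Theorem~\ref{Texcision}), modeled on the longitudinal-closure construction in \cite{KM:suture}. These excision cobordisms intertwine the operator $\muu(R)$ cutting out the top eigenspace on the $\THI$ side with the analogous eigenspace cut that defines $\SHI$. \emph{Step 3 (product detection).} Given Step 2, the hypothesis $\THI(T)\cong\mathbb{C}$ yields $\SHI(M,\gamma)\cong\mathbb{C}$. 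The instanton analogue of the Ghiggini/Ni/Juh\'asz product-sutured-manifold theorem due to Kronheimer--Mrowka then forces $(M,\gamma)\cong (R_+(\gamma)\times[-1,1],\partial R_+(\gamma)\times[-1,1])$. \emph{Step 4 (building $F$).} The product structure on $M$ extends canonically across the tubular neighborhood $N(T)$ (using the meridional sutures to match disks in $N(T)$ to the $\{t\}\times\nu\{p_i\}$ slices) to produce a diffeomorphism of pairs $(I\times D,I\times\{p_1,\ldots,p_m\})\to(I\times D,T)$ preserving $D^\pm$ and $I\times\partial D$ setwise, which is the desired $F$.

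The main obstacle will be Step~2. The auxiliary link $\mathcal{K}_2$ and arc $u$ in the definition of $\THI(T)$ do not appear naturally in the sutured closure, so I would need to arrange the sequence of excisions so that the extra $\mathcal{K}_2$-strands are absorbed into the auxiliary closure piece of $\SHI$ while tracking how $\muu(R)$ transforms under each excision; an honest verification that the cobordisms are isomorphisms on the correct generalized eigenspaces (rather than merely identifying full Floer groups) is the delicate part. Steps~1, 3, and 4 should then be routine once Step~2 is set up carefully.
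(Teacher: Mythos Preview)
The paper does not give its own proof of this theorem: it is cited as Street's result (Theorem~3.4.4 in \cite{Street}), and the only argument in the paper is the adaptation to the odd case in Theorem~\ref{odd-braid-detection}. Your overall strategy---reduce to $\SHI$ of a sutured tangle complement and invoke the product-detection theorem of Kronheimer--Mrowka---is exactly the one Street uses and the one the paper illustrates in the odd case.

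The comparison worth making is at your Step~2. You propose a single direct excision identification $\THI(T)\cong\SHI(M,\gamma)$ with $M=(I\times D)\setminus N(T)$ and meridional sutures. Street's argument (as mirrored in the paper's odd-case proof) does not proceed that way: it runs through a chain of sphere and torus excisions that first introduces extra arcs joining components (via Proposition~\ref{S1S2u}), then cables a component, then pairs all components by arcs, and only after torus-excising along all the paired boundary tori arrives at a triple $(Y,\emptyset,w)$ that matches a closure used for $\SHI$. The reason for this circuitous route is precisely the obstacle you flag: the auxiliary $\mathcal{K}_2$ and the arc $u$ do not sit naturally in the sutured closure, and one needs enough components with arcs so that the torus excisions produce the right $w_2$ data on the $\SHI$ side. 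Your sutured manifold is cleaner to state, but you would need to supply the same sort of multi-step excision sequence to actually establish the isomorphism; the paper's odd-case proof is a template for how that goes.
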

Our statement here is not exactly the same as Theorem 3.4.4 in \cite{Street}, in which he concludes that $T$ is a product tangle if $\THI(T)=\mathbb{C}$.
What he actually proves is that
the complement of a tubular neighborhood of $T$ is a product manifold, which is equivalent to our statement. 
The tangle $T$ is the same as the product tangle
modulo diffeomorphisms on $I\times D$.
In this paper, we want to define
 two tangles to be equivalent if there is an ambient isotopy which moves one tangle to another one and
\emph{fixes the end points of the tangle all the time}. It may happen that $F$ 
represents a non-trivial element in the mapping class group
$$\text{MCG}(I\times D, \partial I \times \{p_1,\cdots, p_m\}).$$
So we cannot conclude $T$ is a product tangle in our sense. Instead, we have the following.
\begin{COR}\label{braid-detection}
Let $T\subset I\times D$ be a vertical  balanced admissible tangle. Then $\THI(T)\cong\mathbb{C}$ if and only if  $T$ is isotopic to a braid.
\end{COR}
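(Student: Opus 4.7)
The reverse direction is immediate: a braid is, by definition, the image of the trivial tangle $I\times\{p_1,\ldots, p_m\}$ under a level-preserving diffeomorphism of $(I\times D, I\times\partial D)$ sending $D^\pm$ to $D^\pm$, so Theorem \ref{Street-detection} yields $\THI(T) \cong \mathbb{C}$ whenever $T$ is isotopic to a braid.

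For the forward direction, suppose $\THI(T)\cong \mathbb{C}$ and let $F:(I\times D, I\times\{p_1,\ldots,p_m\})\to(I\times D, T)$ be the diffeomorphism of pairs supplied by Theorem \ref{Street-detection}, with $f_\pm := F|_{D^\pm}$. The plan is to replace $F$ by a \emph{level-preserving} diffeomorphism $G(t,x)=(t,g_t(x))$ having the same restrictions $G|_{D^\pm} = f_\pm$, and then to transport $F$ to $G$ through an isotopy $F_s$ in the space of diffeomorphisms of $(I\times D, I\times\partial D)$ with $F_s|_{D^\pm} = f_\pm$ held constant throughout. Granting this, the ambient isotopy $H_s := F_s \circ F^{-1}$ of $I\times D$ satisfies $H_s|_{D^\pm} = f_\pm\circ f_\pm^{-1}=\mathrm{id}$, so the endpoints $f_\pm(\{p_1,\ldots,p_m\})$ of $T$ are fixed pointwise for all $s$; moreover $H_1(T) = G(I\times\{p_1,\ldots,p_m\})$ consists of the graphs $t \mapsto (t, g_t(p_i))$, which is manifestly a braid since each strand projects monotonically onto $I$ and distinct strands are disjoint by the injectivity of $g_t$.

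Constructing $G$ is easy: since $\mathrm{Diff}(D,\partial D)$ is path-connected by Alexander's trick, one may choose a smooth path $g_t$ from $g_{-1}=f_-$ to $g_{1}=f_+$ and define $G$ as above. The main obstacle is producing the rel-$D^\pm$ isotopy $F_s$; equivalently, one must show that $G^{-1}\circ F$, which is an orientation-preserving self-diffeomorphism of $I\times D \cong B^3$ restricting to the identity on $D^\pm$ and setwise preserving the annulus $I\times\partial D$, is isotopic to the identity through diffeomorphisms of this type. This is where the deeper 3-manifold topology enters: it relies on Cerf's theorem $\pi_0\,\mathrm{Diff}(B^3,\partial B^3) = 0$, together with the observation that the mapping class group of the annulus rel $\partial$ is generated by a Dehn twist, and any such residual twist along $I\times\partial D$ can be cancelled in advance by pre-composing the path $g_t$ with an appropriate rotational loop in $\mathrm{Diff}(D,\partial D)$ coming from Smale's analysis of $\mathrm{Diff}(D^2)$.
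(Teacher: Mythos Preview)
Your proposal is correct and follows the same underlying strategy as the paper: reduce Theorem \ref{Street-detection} to the statement that $F$ can be isotoped, with endpoints of the tangle held fixed, to a level-preserving diffeomorphism.  The paper first composes with $(\mathrm{id}_I\times F|_{D^+})^{-1}$ to normalize the top slice to the identity, and then asserts (without spelling out the justification) that the resulting $F'$ is isotopic, rel $\{1\}\times D$ and rel a smaller disk $\{-1\}\times D'\supset\{p_1,\dots,p_m\}$, to an $I$-preserving $\widetilde F$; you instead build a level-preserving $G$ matching both $f_\pm$ and then explicitly invoke Cerf's theorem and the mapping class group of the annulus to produce the rel-$D^\pm$ isotopy from $G^{-1}F$ to the identity.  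Your route is slightly heavier but has the virtue of naming the nontrivial 3-dimensional input that the paper's proof passes over silently.

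One small imprecision: you appeal to path-connectedness of $\mathrm{Diff}(D,\partial D)$ to connect $f_-$ to $f_+$, but the hypothesis $F(I\times\partial D)=I\times\partial D$ only guarantees that $f_\pm$ preserve $\partial D$ \emph{setwise}, not pointwise.  This is harmless since $\mathrm{Diff}(D)$ is already contractible (Smale), so the path $g_t$ still exists; but you should either say $\mathrm{Diff}(D)$ or first isotope $f_\pm$ to fix $\partial D$ pointwise.
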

\begin{proof}
By Theorem \ref{Street-detection}, it suffices to show that there is a diffeomorphism 
\begin{equation}\label{F-map-T}
  F:(I\times D, I\times \{p_1,\cdots, p_m\} ) \to (I\times D, T)
\end{equation}
if and only if $T$ is isotopic to a braid.

The braid group $B_m$ is isomorphic to the mapping class group 
\begin{equation*}
M_m:=\text{MCG}(D\setminus \{p_1,\cdots, p_m \}, \partial D)
\end{equation*}
of a $m$-punctured disk rel the boundary. To be more precise, the isomorphism $M_m\to B_m$ can be constructed as follows.
Given an element $[f]\in M_m $, the map $f$ can be extended into a 
map $f: D\to D$ which is the identity map on the boundary. 
There is an isotopy $h_t : D\to D$ with $h_{1}=\idd$, $h_{-1}=f$ and $h_t|_{\partial D}=\idd$. 
Then a braid in $B_m$ can be defined by
\begin{equation}\label{isotopy-braid}
 \{(t,h_t(p_i))|t\in I, i=1,\cdots,n \} \subset I\times D
\end{equation} 

If $T$ is a braid, then the isotopy $h_t$ is the diffeomorphism $F$ we need. On the other hand, if we have a diffeomorphism $F$
in \eqref{F-map-T}, by composing with $(\idd_I\times F_1)^{-1}$ where $F_1:D\to D$ is defined as $F|_{\{1\}\times D}$, we obtain
a diffeomorphism ${F}':I\times D\to I\times D$ whose restriction on $\{1\}\times D$ is the identity map. The map 
${F}'$ is isotopic to a diffeomorphism $\widetilde{F}$
which is the identity map on $I\times \partial D\cup \{1\}\times D$ and preserves the $I$-coordinate. 
Moreover, this isotopy can be chosen to be identity on $\{1\}\times D\cup \{-1\}\times D'$ where $D'$  is a smaller disk in $D$ but still
contains $p_1,\cdots, p_m$. We can define an isotopy $h_t:=F_{\{t\}\times D}$. 
From the construction we see that $T$ is isotopic to the braid defined by \eqref{isotopy-braid}.
\end{proof}

To illuminate Street's techniques, we prove a detection result for odd tangle Floer homology, whose proof is adapted from
the proof of  \cite[Theorem 3.4.4]{Street}. 
\begin{THE}\label{odd-braid-detection}
Let $T\subset I\times D$ be a vertical  balanced admissible tangle such that $|T\cap D^+|=|T\cap D^-|=m$ is odd and no less than $3$. We also embed
$I\times D$ into $I\times S^2$ in the obvious way. 
If $\THI^{\emph{odd}}(T)\cong\mathbb{C}$, then $T$ is isotopic to a braid through
an isotopy of $I\times S^2$.
\end{THE}
\begin{proof}
We define 
a new tangle
$T'$ as the union of $T$ and a parallel copy of a component of $T$. Then we define
a sutured manifold  $M$  as $I\times S^2-N(T')$. 
We will use the excision theorems (Theorem \ref{Sexcision} and Theorem \ref{Texcision})
to show that $\THI^{\text{odd}}(T)$ is isomorphic to the instanton Floer homology of a sutured manifold $M$.
Then the one-dimensionality
of the sutured Floer homology implies $M$ is a product sutured manifold. Then
it follows from the construction of $M$ that $T$ is isotopic to a braid.

Suppose $\THIo(T)\cong\mathbb{C}$ and $m=2l+1$. 
Close up  each component of $T$ with itself and denote the resulting
link in $S^1\times D$ by $\hat{T}$.
Denote the components of the closure $\hat{T}$ by $L_1,\cdots, L_m$.  Apply Theorem \ref{Sexcision} and
Proposition \ref{S1S2u} to 
\begin{equation*}
(S^1\times S^2,\hat{T},\emptyset) \sqcup (S^1\times S^2, \mathcal{K}_m,u)
\end{equation*} 
where $u$ is an arc joining the first and second components of $\mathcal{K}_m$, we obtain 
\begin{equation*}
 \II(S^1\times S^2,\hat{T}, w|R) \cong \THIo(T)\cong \mathbb{C}
\end{equation*}
where $w$ is an arc joining $L_{1}$ and $L_{2}$. Do excision to 
\begin{equation*}
(S^1\times S^2,\hat{T},w) \sqcup (S^1\times S^2, \mathcal{K}_4,u)
\end{equation*} 
along tori $T_1$ and $T_2$ where $T_1=\partial N(L_1)$ and $T_2$  is the boundary torus of a tubular neighborhood of
the first component of $\mathcal{K}_4$,
we obtain 
\begin{equation*}
(S^1\times S^2,L_1^3\cup L_2\cup\cdots \cup L_m,w) \sqcup (S^1\times S^2, \mathcal{K}_2,u)
\end{equation*} 
where $L_1^3=L_{1,1}\cup L_{1,2}\cup L_{1,3}$ 
is the 3-cable of $L_1$ and $w$ joins  $L_{1,3}$ and $L_2$. By Theorem \ref{Texcision},
we have
\begin{align*}
\II(S^1\times S^2,L_1^3\cup L_2\cup\cdots L_m,w)&\otimes 
\II (S^1\times S^2, \mathcal{K}_2,u)    \\
\cong\II(S^1\times S^2,\hat{T},w) &\otimes  \II(S^1\times S^2, \mathcal{K}_4,u)
\end{align*}
The above isomorphism intertwines 
the operator $\muu(S^2)\otimes 1 +1\otimes \muu(S^2)$ on the top part
with the operator $\muu(S^2)\otimes 1 +1\otimes \muu(S^2)$ on the bottom part,
since the above isomorphism is induced by the excision cobordism
and homology classes used to define the two operators are homologous in the 
cobordism. Taking the top generalized eigenspaces of the two operators, we obtain
\begin{equation*}
\II(S^1\times S^2,L_1^3\cup L_2\cup\cdots \cup L_m,w|R) \cong \II(S^1\times S^2,\hat{T}, w|R) \cong \mathbb{C}
\end{equation*} 
Let $u_i$ ($1\le i\le l$) be an arc joining the $(2i+2)$-th and $(2i+3)$-th components of $\mathcal{K}_{m+2}$.
 Let $u_0$ be an arc joining the first two components of $\mathcal{K}_{m+2}$ and $u'$ be an arc joining
 the third and fourth components of $\mathcal{K}_{m+2}$.
Apply Theorem \ref{Sexcision} and Proposition \ref{S1S2u} to 
\begin{equation*}
(S^1\times S^2,L_1^3\cup L_2\cup\cdots \cup L_m,w) \sqcup (S^1\times S^2,\mathcal{K}_{m+2}, u'+u_0+u_1+\cdots +u_l)
\end{equation*}    
we obtain
\begin{equation*}
\II(S^1\times S^2,L_1^3\cup L_2\cup\cdots \cup L_m,(w_0+w_1+\cdots+w_l)|R) \cong \mathbb{C}
\end{equation*}
where $w_i$ ($1\le i\le l$) is an arc joining $L_{2i}$ and $L_{2i+1}$ while $w_0$ is an arc joining $L_{1,1}$ and $L_{1,2}$.
Now do excision to
\begin{equation*}
(S^1\times S^2,L_1^3\cup L_2\cup\cdots \cup L_m,(w_0+w_1+\cdots+w_l)) \sqcup (S^1\times S^2, \mathcal{K}_2, u)
\end{equation*}
along tori $T_1$ and $T_2$ where $T_1=\partial N(L_{1,2})$ with the additional requirement that $L_{1,3}\subset N(L_{1,2})$ 
 and $T_2$ is the boundary torus of a tubular neighborhood of one component of $\mathcal{K}_2$, we obtain
\begin{equation*}
(S^1\times S^2,L_{1,1}\cup L_{1,2}\cup L_2\cup\cdots \cup L_m,(w_0+w_1+\cdots+w_l)) \sqcup (S^1\times S^2, \mathcal{K}_3, u)
\end{equation*}
Using Theorem \ref{Texcision} and taking proper generalized eigenspaces, we have
\begin{equation*}
\II(S^1\times S^2,\hat{T}',(w_0+w_1+\cdots+w_l)|R) \cong  \mathbb{C}
\end{equation*}
where we use $T'$ to denote the tangle obtained by replacing the first component of $T$ by its 2-cable so that we have
$\hat{T}'=L_{1,1}\cup L_{1,2}\cup L_2\cup\cdots \cup L_m$.

Now $\hat{T}'$ is a link with $(2l+2)$ components and the components are divided into pairs joined by arcs. 
Let $T_1,\cdots, T_{2l+2}$ be the boundary tori of 
$$N(L_{1,1}),N(L_{1,2}),N(L_2),\cdots, N(L_m)$$ 
Do excision along 
pairs of tori $T_{2i+1},T_{2i}$ ($0\le i \le l+1$), we obtain a triple $(Y(T'), \emptyset,w')$ and $(l+1)$ copies of 
$(S^1\times S^2,\mathcal{K}_2,u)$. The 3-manifold $Y(T')$ can be constructed by gluing $(l+1)$ copies 
of $S^1\times A$ ($A$ is an annulus) to 
$$S^1\times S^2\setminus N(L_{1,1})\cup\cdots\cup N(L_m)$$
By Theorem \ref{Texcision} (and the remark below it),
we have
\begin{equation*}
\II(Y(T'),  \emptyset,w')_{\mu(x),2}\cong \II(S^1\times S^2,\hat{T}',(w_0+w_1+\cdots+w_l))
\end{equation*}
After the excision, the sphere $R\subset S^1\times S^2$ becomes a genus $(l+1)$ surface $\Sigma\subset Y(T')$, we have
\begin{equation*}
\II(Y(T'),\emptyset, w'|\Sigma)\cong \II(S^1\times S^2,\hat{T}',(w_0+w_1+\cdots+w_l)|R)\cong\mathbb{C}
\end{equation*}
where $\II(Y(T'),\emptyset, w'|\Sigma)$ is defined as the simultaneous generalized eigenspace of operators $\mu(\Sigma)$
and $\mu(x)$ with eigenvalues $2l$ and $2$ respectively. Now we view $T'$ as a tangle in $I\times S^2$ by gluing another copy of
$I\times D$ to $I\times D$ along $I\times \partial D$. Let $M$ be the sutured manifold $M=I\times S^2\setminus N(T')$ with
a meridian suture for each component of $T'$. According to \cite[Lemma 2.3]{KM:Alexander}, 
$\II(Y(T'), w' ,\emptyset|\Sigma)$ is isomorphic to the instanton Floer homology $\SHI(M)$ defined for sutured manifolds. 

Next we want to apply \cite{KM:suture}*{Theorem 7.18}
to show that $M$ is a product sutured manifold. For this purpose, we need to show
that $M$ is a balanced taut sutured manifold and a homology product.
Since $T'$ is a vertical tangle, it is clear
from the definition that $M$ is balanced.  
Since $I\times S^2$ and $N(T')$ are product manifolds, it is a direct application 
of the long exact sequence in homology to show that $M=I\times S^2-N(T')$ is a 
homology product. 

The ``top surface''$R^+$ 
(resp. the ``bottom surface'' $R^-$) of $M$
is a punctured 2-sphere  whose punctures are in one-to-one correspondence with 
the sutures. The boundary components of 
any other surface in the homology class of $R^+$ (resp. $R^-$)
is no less than the number of sutures. This implies that  
$R^+$ (resp. $R^-$) minimizes the Thurston norm. 
Suppose  $R^+$ (resp. $R^-$) is compressible.
 Compressing $R^+$ (resp. $R^-$) along the compressing disk would increase the 
 Euler characteristic. The Thurston norm will decrease 
 unless $R^+$ (resp. $R^-$) is an annulus. Since  
 $R^+$ (resp. $R^-$) has at least 4 boundary components by our assumption,
 the Thurston norm must decrease in the compressing process, which contradicts
 the norm-minimizing property of $R^+$ (resp. $R^-$). Therefore we conclude that
 $R^+$ (resp. $R^-$) is incompressible. 
  Pick a 2-sphere $S$ in $M$. By the Light Bulb theorem 
  (see \cite[Page 257]{Rolfsen} for example), the complement of a component
  of $T'$ in $I\times S^2$ is homeomorphic to $I\times D$. Since $I\times D$
  is irreducible, $S$ bounds a 3-ball $B$ in the complement of a component of 
  $T'$. Since $T'$ is embedded, $B$ is disjoint from $T'$. Therefore 
  $S$ bounds a 3-ball in $M$. This means $M$ is irreducible.
In summary, we show that $M$ is taut.

Now by \cite{KM:suture}*{Theorem 7.18}, $\SHI(M)\cong\mathbb{C}$ implies 
$M$ is the product sutured manifold $I\times F$ where $F$ is a punctured $S^2$.  From the proof of 
Corollary \ref{braid-detection} we see that $T'$ is isotopic to a braid . 
Therefore $T$ is also isotopic to a braid. 
\end{proof}

\section{Instanton Floer Homology for Links in a Thickened Annulus}\label{AHI}
In this section,
we will define the counterpart of the annular Khovanov homology in the instanton Floer homology
setting. We use $S^1\times D$ to represent a thickened annulus.
\subsection{Definition and basic properties}
 \begin{DEF}\label{AHI-def}
Let $L\subset S^1\times D$ be a link. Take another pair $(S^1\times D,\mathcal{K}_2)$ where $\mathcal{K}_2=S^1\times \{p_1,p_2\}$ and
glue the two copies of $S^1\times D$ by identifying the boundary in the obvious way to obtain a new pair $(S^1\times S^2, L\sqcup \mathcal{K}_2)$.
The annular instanton Floer homology for $L$ is defined as
\begin{equation*}
  \AHI(L):=\II(S^1\times S^2, L\sqcup \mathcal{K}_2, u)
\end{equation*}
where $u$ is an arc joining two connected components of $\mathcal{K}_2$ as before.
\end{DEF}

\begin{eg}\label{AHIU1K1}
By abuse of notation, we use $\mathcal{K}_1$ to denote the homologically non-trivial circle $S^1\times \{\pt\}\subset S^1\times D$. Then we have
$\AHI(\mathcal{K}_1)=\mathbb{Z}^2$. We use $U_1$ to denote an unknot in $S^1\times D$, then we have $\AHI(U_1)=\mathbb{Z}^2$. This can be seen from fact that
the Chern-Simons functional
for the triple $(S^1\times S^2, U_1\sqcup \mathcal{K}_2, u)$ has Morse-Bott critical manifold $\mathbb{CP}^1$ (c.f. Proposition 4.10 in \cite{KM:YAFT}).
Alternatively, one can apply the torus excision to cut along the boundary of the neighbourhood of a circle in $\mathcal{K}_2$ and re-glue the boundary by
interchanging its meridian and longitude. In this way, we obtain a new triple $(S^3, U_1\sqcup H,u)$ where $H$ is a Hopf link in $S^3$ and $u$ is an arc joining
the two components of $H$. Now we have $\AHI(U_1)\cong \II^\sharp (U_1)=\mathbb{Z}^2$ where $\II^\sharp$ is the invariant defined in \cite{KM:Kh-unknot}.
See Section \ref{re-inst} for more details.
\end{eg}

\bigbreak

Suppose $L_1$ and $L_2$ are two links in $S^1\times D$. Without loss of generality, we may assume $L_1, L_2$ lie in $S^1\times D_1$ and $S^1\times D_2$ respectively
where $D_1$ and $D_2$ are two disjoint disks in $D$. We use $L_1\sqcup L_2$ to denote the new link formed as the union of $L_1$ and $L_2$.
\begin{PR}\label{link-sum}
Let $L_1\sqcup L_2$ be obtained as above, then
\begin{equation*}
  \AHI(L_1\sqcup L_2)=\II( (S^1\times S^2, L_1\sqcup \mathcal{K}_2, u)\sqcup  (S^1\times S^2, L_2\sqcup \mathcal{K}_2, u))
\end{equation*}
In particular, if one of $\AHI(L_1)$ and $\AHI(L_2)$ is torsion-free, then we have
\begin{equation}\label{L1+L2}
  \AHI(L_1\sqcup L_2)\cong\AHI(L_1)\otimes \AHI(L_2)
\end{equation}
Moreover, this isomorphism (after complexifying both sides)
intertwines $\muu(R)$ with $\muu(R_1)\otimes 1 +1\otimes \muu(R_2)$ where $R, R_1, R_2$ are all $S^2$-slices in $S^1\times S^2$.
\end{PR}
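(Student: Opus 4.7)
The plan is to apply the sphere excision theorem (Theorem \ref{Sexcision}) to the right-hand side of the claimed equality, collapsing the disjoint union of two copies of $S^1\times S^2$ into a single copy containing both links together with $\mathcal{K}_2$. In each factor of the disjoint union I pick an $S^2$-slice $R_i$, isotoped so that it meets $\mathcal{K}_2$ transversally in the two expected points and meets $L_i$ in some number $\ell_i$ of points, so $|R_i \cap K_i| = \ell_i + 2$. The gluing diffeomorphisms can be chosen to identify the two components of $\mathcal{K}_2$ across the cuts in a consistent way, and the result of cutting and regluing the two copies along these spheres is a single $S^1\times S^2$ containing $L_1 \sqcup L_2 \sqcup \mathcal{K}_2$ together with an arc $u$ joining the two components of $\mathcal{K}_2$. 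This is exactly the triple whose Floer homology is $\AHI(L_1\sqcup L_2)$ by definition.

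The technical subtlety is that Theorem \ref{Sexcision} requires the intersection numbers $|R_i \cap K_i|$ to be equal and odd. In general these parities may be even (for instance when $L_i$ is null-homologous in $S^1\times D$) or may not match between the two factors. I would handle these cases as Street does just after equation \eqref{THI=AHI}: adjoin an auxiliary triple $(S^1\times S^2, \mathcal{K}_3, u')$ and apply torus excision (Theorem \ref{Texcision}) along suitably chosen boundary tori of tubular neighborhoods, mirroring the iterated computation of $\II(S^1\times S^2, \mathcal{K}_m, u)$ in Section \ref{tangle}. This adjusts the parities so that Theorem \ref{Sexcision} becomes applicable; after carrying out the sphere excision and reversing the auxiliary torus excisions, the residual auxiliary factors are copies of $(S^1\times S^2, \mathcal{K}_2, u)$, whose Floer homology is $\mathbb{Z}$ and which therefore contribute trivially.

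The tensor product formula \eqref{L1+L2} is then the standard K\"unneth formula for instanton Floer homology on a disjoint union of admissible triples, which applies under the torsion-freeness assumption on one factor. The intertwining statement for $\muu$ follows from the cobordism underlying the excision isomorphism: the $S^2$-slice $R$ of the connected triple is homologous in the excision cobordism to $R_1 \sqcup R_2$ from the disjoint union, and $\muu$ is additive in the homology class of the underlying surface, as already exploited in the paragraph following the computation of $\II(S^1\times S^2, \mathcal{K}_4, u)$. The principal obstacle I anticipate is keeping track of the topology through the parity-adjustment torus excisions carefully enough to verify that the composition produces the connected triple $(S^1\times S^2, L_1\sqcup L_2\sqcup\mathcal{K}_2, u)$ on the nose, with every auxiliary factor cancelling in pairs.
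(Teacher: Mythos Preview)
Your plan has a genuine gap: Theorem \ref{Sexcision} (sphere excision) only yields an isomorphism of the \emph{top} generalized eigenspaces $\II(\,\cdot\,|R)$ over $\mathbb{C}$, not of the full Floer homology groups. The first displayed equality in the proposition is an identification of the full integral group $\AHI(L_1\sqcup L_2)=\II(S^1\times S^2,L_1\sqcup L_2\sqcup\mathcal{K}_2,u)$, and the intertwining statement concerns $\muu(R)$ acting on the whole complexified group, across all eigenvalues. A sphere-excision argument would at best give you $\AHI(L_1\sqcup L_2,\ell_1+\ell_2)\cong\AHI(L_1,\ell_1)\otimes\AHI(L_2,\ell_2)$ for one particular top eigenvalue, which is much weaker than what is claimed.

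The paper instead uses torus excision (Theorem \ref{Texcision}) directly, and this is both simpler and stronger. In each of the two triples $(S^1\times S^2,L_i\sqcup\mathcal{K}_2,u)$, take the boundary torus $T_i$ of a tubular neighborhood of one component of $\mathcal{K}_2$; this torus is separating, disjoint from the link, and meets the arc $u$ once, so the hypotheses of Theorem \ref{Texcision} are satisfied with no parity adjustment needed. Cutting and regluing produces $(S^1\times S^2,L_1\sqcup L_2\sqcup\mathcal{K}_2,u)\sqcup(S^1\times S^2,\mathcal{K}_2,u)$, and since $\II(S^1\times S^2,\mathcal{K}_2,u)\cong\mathbb{Z}$ this gives the full integral isomorphism immediately. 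The $\muu(R)$ intertwining then follows exactly as in the computation of $\II(S^1\times S^2,\mathcal{K}_m,u)$: in the excision cobordism the class $[R]$ is homologous to $[R_1]+[R_2]$. All of the parity bookkeeping you anticipated simply does not arise.
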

\begin{proof}
The proof is based on the same method that we used to calculate $\II(S^1\times S^2, \mathcal{K}_m,u)$. 
Let $T$ be the boundary torus of a tubular neighborhood of one component of
$\mathcal{K}_2$. Then we have two tori $S^1\times m$ in  $(S^1\times S^2, L_1\sqcup \mathcal{K}_2, u)$ and $(S^1\times S^2, L_2\sqcup \mathcal{K}_2, u)$ respectively.
Do excision along the two tori we obtain $(S^1\times S^2, L_1\sqcup L_2 \sqcup \mathcal{K}_2, u)\sqcup (S^1\times S^2,\mathcal{K}_2, u)$.
By Theorem \ref{Texcision}, 
we obtain the first statement of the proposition. The proof of the last part is the same as how we study the sphere operators
on $\II(S^1\times S^2, \mathcal{K}_m,u)$ in the previous section.
\end{proof}

If we use $U_k$ to denote the unlink with $k$ components. By Proposition \ref{link-sum} we have
\begin{equation*}
  \AHI(U_k\sqcup \mathcal{K}_l)\cong \AHI(U_1)^{\otimes k}\otimes \AHI(\mathcal{K}_1)^{\otimes l}=\mathbb{Z}^{2^{(k+l)}}
\end{equation*}

The annular instanton Floer homology has a (relative) $\mathbb{Z}/4$ homological grading. If we use complex coefficients,
 we can define another grading for it.
\begin{DEF}
We define
\begin{equation*}
  \AHI(L,i)\subset \AHI(L;\mathbb{C})= \II(S^1\times S^2, L\sqcup \mathcal{K}_2, u;\mathbb{C})
\end{equation*}
as the \emph{generalized} eigenspace for the operator $\muu(R)$ for the eigenvalue $i\in \mathbb{Z}$ where $R$ is a $S^2$-slice in $S^1\times S^2$.
\end{DEF}
Since $\muu(R)$ is an operator of degree $2$, the homological grading descends to a $\mathbb{Z}/2$-grading after we take the generalized eigenspaces.

By Proposition \ref{link-sum}, we have
\begin{equation}\label{unlink-AHI}
  \dim \AHI(U_k\sqcup \mathcal{K}_l,i)=\left\{\begin{array}{cc}
                                                2^k {l \choose (l+i)/2} & \text{if}~|i|\le l~\text{and}~l-i=0 ~(\text{mod}~2) \\
                                                0 & \text{otherwise.}
                                              \end{array}\right.
\end{equation}

Let $T\subset I\times D$ be a balanced admissible tangle such that $|T\cap D^+|=m$. We can close up
$T$ to obtain a link $\hat{T}\subset S^1\times D$. It
is clear from the definition that
\begin{equation}\label{THI=AHI}
  \THI(T)\cong \AHI(\hat{T},m)
\end{equation}

\begin{PR}\label{eigen-range}
Let $L$ be a link in $S^1\times D$. Suppose there is a slice $\{\pt\}\times D$ 
that intersects $L$ transversely at $m$ points. If $\AHI(L,i)\neq 0$ then we must have
\begin{equation*}
  i\in \mathcal{C}_{m+2}:=\{-m, -(m-2),\cdots, m-2, m\}.
\end{equation*}
We also have
\begin{equation*}
  \AHI(L,i)\cong \AHI(L,-i)
\end{equation*}
\end{PR}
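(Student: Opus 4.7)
The plan is to identify the operator $\muu(R)$ on $\AHI(L;\mathbb{C})$ with a sphere operator in the closed setting and then invoke the eigenvalue bounds already established for $\mathcal{K}_m$. Concretely, the slice $\{\pt\}\times D\subset S^1\times D$ extends, under the gluing in Definition \ref{AHI-def}, to an embedded two-sphere $R\subset S^1\times S^2$ meeting $L\sqcup\mathcal{K}_2$ transversally in exactly $m+2$ points. So the question becomes: what are the eigenvalues of $\muu(R)$ acting on $\II(S^1\times S^2, L\sqcup\mathcal{K}_2, u;\mathbb{C})$, given $|R\cap(L\sqcup\mathcal{K}_2)|=m+2$?

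If $m$ is odd then $m+2$ is odd and Corollary \ref{specbound} applies directly to the admissible triple $(S^1\times S^2, L\sqcup\mathcal{K}_2, u)$, giving $\text{Spec}(\muu(R))\subset \mathcal{C}_{m+2}$, which is exactly the desired eigenvalue bound. If $m$ is even, I will reduce to the odd case by the torus excision trick already used at the end of Section \ref{tangle} to compute $\II(S^1\times S^2,\mathcal{K}_m,u)$: apply Theorem \ref{Texcision} to
\begin{equation*}
(S^1\times S^2, L\sqcup\mathcal{K}_2, u)\sqcup (S^1\times S^2,\mathcal{K}_3,u)
\end{equation*}
along a torus bounding a neighborhood of one component of $\mathcal{K}_2$ on the left and of one component of $\mathcal{K}_3$ on the right, producing $(S^1\times S^2, L\sqcup\mathcal{K}_3, u)\sqcup(S^1\times S^2,\mathcal{K}_2,u)$. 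Exactly as in the $\mathcal{K}_m$ computation, the excision cobordism makes $[R]+[R_3]$ homologous to $[R']+[R_2]$, so the resulting isomorphism intertwines $\muu(R)\otimes 1+1\otimes\muu(R_3)$ with $\muu(R')\otimes 1+1\otimes\muu(R_2)$. Using $\muu(R_2)=0$, that the eigenvalues of $\muu(R_3)$ are $\pm 1$ (each with a nonzero generalized eigenspace), and that $\text{Spec}(\muu(R'))\subset\mathcal{C}_{m+3}$ by the odd case applied to $L\sqcup\mathcal{K}_3$, I deduce that for any $\lambda\in\text{Spec}(\muu(R))$ both $\lambda\pm 1$ lie in $\mathcal{C}_{m+3}=\{-(m+1),\ldots,m+1\}$. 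This forces $\lambda\in\mathcal{C}_{m+2}$, completing the eigenvalue bound in the even case.

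For the symmetry $\AHI(L,i)\cong\AHI(L,-i)$, I will use the same idea as in Corollary \ref{specbound}: since $\muu(R)$ has degree $2$ on a relatively $\mathbb{Z}/4$-graded complex vector space, the automorphism $\phi(\alpha)=(\sqrt{-1})^{\deg\alpha}\alpha$ on homogeneous elements satisfies $\phi\circ\muu(R)=-\muu(R)\circ\phi$, so it identifies the generalized $i$-eigenspace with the generalized $(-i)$-eigenspace. This works in both parities of $m$ and requires no further excision.

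The main obstacle is the even case: one has to be careful when applying the torus excision that the relevant homology classes of $S^2$-slices line up correctly across the cobordism so that the sphere-operator identities transport as claimed; once this bookkeeping is verified along the lines already carried out in Section \ref{tangle}, the eigenvalue containment and the symmetry both follow with no further input.
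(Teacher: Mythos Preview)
Your proof is correct and follows essentially the same route as the paper: the odd case is handled directly by Corollary~\ref{specbound}, and for even $m$ the paper invokes Proposition~\ref{link-sum} applied to $L\sqcup\mathcal{K}_1$, whose proof is precisely the torus excision you spell out (note $\mathcal{K}_1\sqcup\mathcal{K}_2=\mathcal{K}_3$), together with the same intertwining of $\muu(R)$-operators. The symmetry $\AHI(L,i)\cong\AHI(L,-i)$ is obtained in the paper exactly as you do, via the degree-$2$ operator trick on a $\mathbb{Z}/4$-graded space.
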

\begin{proof}
Since $\AHI(L)=\II(S^1\times S^2, L\sqcup \mathcal{K}_2,u)$, the proposition follows from Corollary \ref{specbound} directly when $m$ is odd.
When $m$ is even, we use Proposition \ref{link-sum} to obtain an isomorphism
\begin{equation*}
  \AHI(L\sqcup \mathcal{K}_1)\cong \AHI(L)\otimes \AHI(\mathcal{K}_1)
\end{equation*}
Recall that $\AHI(\mathcal{K}_1)$ is supported 
at f-degrees $\pm 1$ by \eqref{unlink-AHI}.
Since this isomorphism also intertwines the $\muu(R)$-operators, we reduce the problem back into the case of odd intersection points .
The second part follows from
Corollary \ref{specbound}.
\end{proof}

\subsection{Functoriality}
Given two links $L_1,L_2$ in $S^1\times D$ and a (possibly non-orientable) cobordism $S\subset I\times S^1\times D$ from $L_1$ to $L_2$, there is a map induced by the cobordism
\begin{equation*}
  \AHI(S):\AHI(L_1)\to \AHI(L_2)
\end{equation*}
defined by
\begin{equation*}
  \AHI(S):=\II(I\times S^1\times S^2, S\sqcup I\times \mathcal{K}_2, I\times u)
\end{equation*}
This map is only well-defined up to an overall sign. Let $\mathcal{A}$ be the category of annular links with morphisms the link cobordisms.
$\AHI$ is a functor
\begin{equation*}
  \AHI: \mathcal{A}\to \mathcal{G}_p
\end{equation*}
where $\mathcal{G}_p$ is the category of abelian groups in which the  
morphisms are taken to be group homomorphisms modulo $\pm 1$.

If $L_1$ and $L_2$ are oriented and $S$ is an oriented cobordism from $L_1$ to $L_2$, then
$\AHI(S)$ can be defined without the sign ambiguity by \cite{KM:Kh-unknot}*{Section 4.4}. We denote
the category of oriented annular links with morphisms the oriented link cobordisms by $\mathcal{A}_o$, then $\AHI$ is a functor
\begin{equation*}
  \AHI: \mathcal{A}_o\to \mathcal{G}
\end{equation*}
where $\mathcal{G}$ is the category of abelian groups.

 The isomorphism \eqref{L1+L2} is natural under split cobordisms: the disjoint union of cobordisms contained in disjoint subspaces $S^1\times D_1, S^1\times D_2$ of
$S^1\times D$ respectively.

\subsection{Relations to other instanton invariants}\label{re-inst}
We want to understand the relation between the annular instanton Floer homology $\AHI$ and the singular instanton Floer homology for links in $S^3$ defined in
\cite{KM:Kh-unknot}. Let $H$ be a Hopf link in $S^3$ and $\omega$ be an arc joining the two components of $H$, then $R(S^3, H, w)$ consists of a single
non-degenerate point. Therefore we have
\begin{LE}[{{\cite{KM:Kh-unknot}*{Proposition 4.1}}}]\label{IH}
Let $H$ and $w$ be defined as above. Then
\begin{equation*}
  \II(S^3,H,w)\cong \mathbb{Z}
\end{equation*}
\end{LE}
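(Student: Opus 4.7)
The plan is to verify the claim by showing directly that the representation variety $R(S^3,H,w)$ consists of a single non-degenerate point, and then invoking the standard fact that a Chern--Simons functional with a single non-degenerate critical point (and no nontrivial moduli of trajectories) has Morse homology $\mathbb{Z}$. Concretely, the critical points of the Chern--Simons functional on the space of singular orbifold $SO(3)$-connections on $(S^3, H, w)$ correspond, up to gauge, to conjugacy classes of representations
\[
\rho : \pi_1\bigl(S^3 \setminus (H \cup w)\bigr) \to SU(2)
\]
such that the holonomy around each meridian of $H$ is conjugate to $\mathbf{i} \in SU(2)$, and the holonomy around a meridian of the arc $w$ is $-1$.

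First I would compute the relevant fundamental group. The Hopf-link complement deformation-retracts to a torus, so $\pi_1(S^3\setminus H) = \mathbb{Z}^2$ generated by the meridians $m_1, m_2$ of the two components. Removing the arc $w$ that joins the two components of $H$ introduces one further generator $\mu$ (a small meridian of $w$), together with a single relation that expresses $\mu$ as a word in $m_1, m_2$ coming from pushing $\mu$ off $w$ to its two endpoints on $H$; the key consequence is that $\mu$ lies in the subgroup generated by $m_1$ and $m_2$, and in fact maps to a product of the two meridians (up to orientation conventions).

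Next I would solve the constraint equations. Since $m_1$ and $m_2$ commute in $\pi_1(S^3\setminus H)$ and both $\rho(m_1), \rho(m_2)$ are conjugate to $\mathbf{i}$, they must lie in a common maximal torus of $SU(2)$; after conjugation we may take $\rho(m_1) = \mathbf{i}$ and $\rho(m_2) = \pm\mathbf{i}$. The $w_2$-twist dual to $[w]$, equivalently the requirement $\rho(\mu) = -1$, then picks out exactly one of these two options — exactly the choice for which the word in $m_1,m_2$ representing $\mu$ evaluates to $-1$ rather than $+1$. This yields a single gauge equivalence class, so $R(S^3,H,w)$ is a single point. Non-degeneracy follows from a routine computation: the deformation complex at the representation is governed by the cohomology of the complement with coefficients in the associated flat adjoint bundle, and the twist by $w$ together with the fact that $\rho(m_2) \neq \rho(m_1)^{\pm 1}$ as elements of $SO(3)$ kills the relevant $H^1$.

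With a single non-degenerate critical point, no perturbation is needed, the chain complex is a single copy of $\mathbb{Z}$, and there are no trajectories to count; hence $\II(S^3,H,w) \cong \mathbb{Z}$. The main obstacle I expect is the bookkeeping of the $\pm$ sign in identifying the meridian $\mu$ of $w$ as a specific word in $m_1, m_2$, and thereby confirming that exactly one (rather than zero or two) of the candidate representations satisfies $\rho(\mu) = -1$; once this combinatorial point is pinned down, the rest of the argument proceeds formally.
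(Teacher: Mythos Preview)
Your overall strategy --- show that $R(S^3,H,w)$ is a single non-degenerate point and conclude that the Floer chain complex is $\mathbb{Z}$ with zero differential --- is exactly what the paper does (it simply asserts this fact, citing Kronheimer--Mrowka). However, your actual computation of the representation variety is wrong, and the two errors in it do not cancel.

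The complement $S^3\setminus H$ deformation-retracts to a torus, and the arc $w$ may be taken as $\{\text{pt}\}\times I$ inside $T^2\times I$. Removing a tubular neighbourhood of $w$ therefore gives something homotopy equivalent to a once-punctured torus, so $\pi_1\bigl(S^3\setminus(H\cup w)\bigr)$ is \emph{free} on the two meridians $m_1,m_2$. In particular $m_1$ and $m_2$ no longer commute once $w$ is removed, and the meridian $\mu$ of $w$ is the boundary of the puncture, i.e.\ the commutator $[m_1,m_2]$, not a product $m_1m_2$. Your argument uses commutation of $\rho(m_1),\rho(m_2)$ to force them into a common maximal torus, but if they did commute then $\rho(\mu)=\rho([m_1,m_2])=1\neq -1$ and there would be \emph{no} solutions at all.

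The correct computation goes as follows. After conjugating, set $\rho(m_1)=\mathbf{i}$ and write $\rho(m_2)=a\mathbf{i}+b\mathbf{j}+c\mathbf{k}$ with $a^2+b^2+c^2=1$. A direct quaternion calculation gives
\[
\rho([m_1,m_2]) \;=\; (a^2-b^2-c^2) \;+\; 2ac\,\mathbf{j}\;-\;2ab\,\mathbf{k},
\]
so $\rho(\mu)=-1$ forces $a=0$. The remaining circle $\{b\mathbf{j}+c\mathbf{k}:b^2+c^2=1\}$ is exactly the orbit of $\mathbf{j}$ under the residual stabiliser of $\mathbf{i}$, so modulo conjugacy there is a unique representation, with $\rho(m_1)=\mathbf{i}$, $\rho(m_2)=\mathbf{j}$. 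Non-degeneracy is then immediate since this representation is irreducible (its image is the binary dihedral group) and the complement is homotopy equivalent to a $1$-complex. With that correction, your outline is fine and matches the paper's (unstated) argument.
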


Let $L\subset S^1\times D$ be a link.
Denote the boundary of the tubular neighbourhood of a component in $\mathcal{K}_2\subset S^1\times S^2$ by $T_1$. Similarly, pick a component of the Hopf link and
denote the boundary torus of its neighborhood by $T_2$.  Do excision to
\begin{equation*}
  (S^1\times S^2, L\sqcup \mathcal{K}_2, u)\sqcup (S^3, H, \omega)
\end{equation*}
along $T_1$ and $T_2$. Choosing the diffeomorphism between $T_1$ and $T_2$ 
which maps the meridian (longitude) of $T_1$ to the longitude (meridian) of $T_2$,
then  the new admissible triples would be
\begin{equation*}
  (S^3,\widetilde{L},u)\sqcup (S^1\times S^2, \mathcal{K}_2,\omega)
\end{equation*}
Theorem \ref{Texcision} and Lemma \ref{IH} imply
\begin{equation}\label{AHI-I}
  \AHI(L)\cong \II(S^3,\widetilde{L},u)
\end{equation}
The effect of this excision is the same as doing $0$-surgeries on the two old triples along chosen components of the links. After the excision,
the Hopf link $H$ becomes the product links in $S^1\times S^2$ and the arc $w$ still joins the two components.
$L\sqcup \mathcal{K}_2$ becomes a link $\widetilde{L}$ in $S^3$.  $\widetilde{L}$ can be described by the following way (see Figures \ref{LK} and \ref{Ltilde}):
\begin{enumerate}
  \item    Embed $S^1\times D$ into $S^3$ as the standard solid torus, then $L$ becomes a link inside the solid torus;
  \item    Add a meridian circle of the solid torus to $L$ to obtain a link $L'$ ;
  \item    Add a small ``earring'' to the meridian circle, the resulting link is $\widetilde{L}$. Also pick a small arc $u$ joining the earring and the meridian.
\end{enumerate}

\begin{figure}
\centering
\begin{tikzpicture}
\tikzset{
    partial ellipse/.style args={#1:#2:#3}{
        insert path={+ (#1:#3) arc (#1:#2:#3)}
    }
}
\draw[thick] (0,0) ellipse (3 and 2);  \draw[thick] (0,0) ellipse (2.65 and 1.7);  \draw[dashed] (0,0) ellipse (1.2 and 0.8);
\draw[thick] (-1.4,-0.4) rectangle (-2.2, 0.4); \node at (-1.8,0) {$L$};
\draw[thick] (0,0) [partial ellipse=-157.5:157.5: 1.62cm and 1.08cm];
\draw[thick] (0,0) [partial ellipse=-164:164: 2.22cm and 1.48cm];
\draw[thick] (0,0) [partial ellipse=-163:163: 2.07cm and 1.38cm]; \node[above] at (-1.67,0.4) {...};

\draw[red]  (-2.65,0) to (-3,0);
\draw[dashed] (-2.82,0) to (-3.2,-0.3); \node[below] at (-3.25,-0.25) {$u$};
\draw[dashed] (-3,0.1) to (-3.5, 0.4); \draw[dashed] (-2.65,0.1) to (-3.3, 0.4);  \node[above] at (-3.5,0.35) {$\mathcal{K}_2$};

\end{tikzpicture}
\caption{Link $L\sqcup\mathcal{K}_2$ in $S^1\times S^2$}\label{LK}
\end{figure}

\begin{figure}
\centering
\begin{minipage}{.5\textwidth}
\begin{tikzpicture}
\tikzset{
    partial ellipse/.style args={#1:#2:#3}{
        insert path={+ (#1:#3) arc (#1:#2:#3)}
    }
}
\draw[thick,dash pattern=on 0.6cm off 0.15cm on 100cm] (0,0) [partial ellipse=90:450:2.65cm and 1.7cm];
\draw[thick] (-1.4,-0.4) rectangle (-2.2, 0.4); \node at (-1.8,0) {$L$};
\draw[thick,dash pattern=on 6.15cm off 0.15cm] (0,0) [partial ellipse=-157.5:157.5: 1.62cm and 1.08cm];
\draw[thick,dash pattern=on 8.77cm off 0.15cm] (0,0) [partial ellipse=-164:164: 2.22cm and 1.48cm];
\draw[thick,dash pattern=on 8.1cm off 0.15cm] (0,0) [partial ellipse=-163:163: 2.07cm and 1.38cm]; \node[above] at (-1.67,0.4) {...};

\draw[red]  (-2.65,0) to (-3,0);

\draw[thick,dash pattern=on 0.87cm off 0.15cm on 0.15cm off 0.2cm on 0.2cm off 0.15cm on 100cm] (-1.5,0) [partial ellipse=-90:270: 1.5cm and 2cm];
\node at (3,0) {$=$};
\end{tikzpicture}
\end{minipage}%
\begin{minipage}{.5\textwidth}
\begin{tikzpicture}
\tikzset{
    partial ellipse/.style args={#1:#2:#3}{
        insert path={+ (#1:#3) arc (#1:#2:#3)}
    }
}
\draw[thick] (-1.4,-0.4) rectangle (-2.2, 0.4); \node at (-1.8,0) {$L$};
\draw[thick,dash pattern=on 6.15cm off 0.15cm] (0,0) [partial ellipse=-157.5:157.5: 1.62cm and 1.08cm];
\draw[thick,dash pattern=on 8.77cm off 0.15cm] (0,0) [partial ellipse=-164:164: 2.22cm and 1.48cm];
\draw[thick,dash pattern=on 8.1cm off 0.15cm] (0,0) [partial ellipse=-163:163: 2.07cm and 1.38cm]; \node[above] at (-1.67,0.4) {...};

\draw[red]  (-2.65,0) to (-3,0);

\draw[thick,dash pattern=on 1.17cm  off 0.2cm on 0.2cm off 0.15cm on 6.7cm off 0.15cm on 100cm] (-1.5,0) 
[partial ellipse=-90:270: 1.5cm and 2cm];

\draw[thick, dash pattern=on 1.3cm off 0.1cm] (-3,0) [partial ellipse=-180:180: 0.35cm and 0.22cm];
\end{tikzpicture}
\end{minipage}
\caption{Link $\widetilde{L}$ in $S^3$}\label{Ltilde}
\end{figure}

If $L$ is included in a 3-ball $B\subset S^1\times S^2$, then it is clear that $(\widetilde{L},u)$ is the same as the disjoint union of
$L$  and $(H,w)$. This is the link $L^\sharp$ in \cite{KM:Kh-unknot}. Therefore we have
\begin{PR}\label{AHI-Isharp}
If a link $L$ is included in a 3-ball $B\subset S^1\times S^2$, then
\begin{equation*}
  \AHI(L)\cong \II^\sharp(L)
\end{equation*}
where $\II^\sharp(L)$ is the unreduced singular instanton Floer homology for $L$ (viewed as a link in $S^3$) defined in \cite{KM:Kh-unknot}.
Moreover,  this isomorphism respects the functoriality of both sides.
\end{PR}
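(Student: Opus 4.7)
The plan is to reduce the statement to the excision identification \eqref{AHI-I} and then observe that the ball hypothesis forces the resulting link in $S^3$ to coincide with Kronheimer--Mrowka's construction $L^\sharp$. First I would recall that the discussion preceding \eqref{AHI-I} already gives
\begin{equation*}
  \AHI(L) \;=\; \II(S^1\times S^2, L\sqcup \mathcal{K}_2, u) \;\cong\; \II(S^3, \widetilde{L}, u),
\end{equation*}
where the isomorphism is obtained by applying Theorem \ref{Texcision} to the disjoint union with $(S^3, H, \omega)$ (cutting along the boundary tori of neighborhoods of a component of $\mathcal{K}_2$ and of a component of $H$, regluing so meridians and longitudes switch), together with Lemma \ref{IH}. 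The link $\widetilde{L}\subset S^3$ is the one described by Figure \ref{Ltilde}: the image of $L$ in the standard solid torus, together with a meridian disk boundary decorated by a small earring, and the arc $u$ joining the earring to the meridian.

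Next I would use the hypothesis that $L\subset B\subset S^1\times D$. Because $L$ lies in a three-ball away from a meridian disk, we may isotope so that $L$ and the earring-decorated meridian occupy disjoint three-balls in $S^3$. Thus $(\widetilde{L},u)$ is isotopic as a pair-with-arc to the split union of $L$ with $(H,\omega)$, which is precisely the pair $(L^\sharp,\omega)$ used to define $\II^\sharp(L)$ in \cite{KM:Kh-unknot}. Invariance of singular instanton Floer homology under isotopy of the pair then yields $\II(S^3,\widetilde{L},u)\cong \II^\sharp(L)$, and composing with the excision isomorphism above produces the desired isomorphism $\AHI(L)\cong \II^\sharp(L)$.

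For the functoriality clause, given a cobordism $S\subset I\times S^1\times D$ between two links $L_1,L_2$ that both lie in balls, one applies Theorem \ref{Texcision} to the product cobordism $I\times (S^1\times S^2, L_i\sqcup \mathcal{K}_2, u)$ along $I\times T_1$ and $I\times T_2$. Because $S$ is supported in a ball disjoint from the excision tori, the cobordism map $\AHI(S)$ is carried under the excision isomorphism to the cobordism map on $\II(S^3,\widetilde{L}_i,u)$ induced by $S$ embedded in the standard ball; after the split isotopy of the previous paragraph this becomes exactly the cobordism map defining $\II^\sharp(S)$.

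The main obstacle is the functoriality assertion: one must verify that the excision isomorphism, which is realized by a specific cobordism as in Theorem \ref{Texcision}, commutes with insertion of the cobordism $S$. This is true by a standard stacking argument precisely because $S$ is supported away from the excision tori, but it requires a careful treatment of the overall sign when one tries to upgrade this naturality statement from $\mathcal{A}$ with values in $\mathcal{G}_p$ to the oriented setting $\mathcal{A}_o$ with values in $\mathcal{G}$, using the sign conventions of \cite[Section 4.4]{KM:Kh-unknot}.
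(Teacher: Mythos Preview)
Your proposal is correct and follows essentially the same route as the paper: the proof in the paper is just the sentence preceding the proposition, namely that when $L$ lies in a ball the excision description of $\widetilde{L}$ collapses to $L\sqcup (H,w)=L^\sharp$, so \eqref{AHI-I} gives the result. Your treatment of functoriality via the excision cobordism being disjoint from $S$ is a reasonable elaboration of what the paper only asserts.
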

For a general link $L$ in $S^1\times D$, the meridian circle in Step (2) is linked with $L$ non-trivially. Pick a basepoint on the meridian circle, then we have
\begin{equation}\label{AHI-Inatural}
  \AHI(L)\cong \II^\natural (L')
\end{equation}
where $\II^\natural$ is the reduced singular instanton Floer homology for $L'$ defined in \cite{KM:Kh-unknot}.

Let $K$ be a knot in $S^3$ and pick a base point $k_0$ on $K$. Remove a neighborhood of $k_0$ we can obtain a vertical balanced admissible tangle
$T(K)$ in $I\times D$. Pick a base point $p_0\in \mathcal{K}_1\subset S^1\times D$ and take  the connected sum
\begin{equation*}
(S^1\times D, \hat{K}):=  (S^3,{K})\# (S^1\times D, \mathcal{K}_1)
\end{equation*}
we obtain a new knot $\hat{K}$ in $S^1\times D$. Then we have
\begin{equation}\label{AAAA}
  \AHI(\hat{K},\mathbb{C})=\AHI(\hat{K},1)\oplus \AHI(\hat{K},-1)\cong \THI(T(K))\oplus \THI(T(K))
\end{equation}
by Proposition \ref{eigen-range}.
Apply Theorem \ref{Sexcision} to
\begin{equation*}
  (S^1\times S^2, \hat{K}\sqcup \mathcal{K}_2,u)\sqcup (S^1\times S^2, \mathcal{K}_3,u)
\end{equation*}
we obtain
\begin{equation}\label{BBBB}
  \THI(T(K))\cong \II(S^1\times S^2, \hat{K}\sqcup \mathcal{K}_2, \emptyset|R)
\end{equation}
where $R$ is a $S^2$-slice. The group 
$\RI(K):= \II(S^1\times S^2, \hat{K}\sqcup \mathcal{K}_2, \emptyset)$ is the \emph{reduced framed instanton homology} defined in
\cite{KM:YAFT}*{Section 4.3}. By \eqref{AAAA}, \eqref{BBBB} and Proposition \ref{specbound}, we have
\begin{equation*}
  \RI(K;\mathbb{C})\cong\II(S^1\times S^2, \hat{K}\sqcup \mathcal{K}_2, \emptyset|R)^{\oplus 2}\cong \AHI(\hat{K};\mathbb{C})
\end{equation*}
Consider
\begin{equation*}
  (S^1\times S^2, \hat{K}\sqcup \mathcal{K}_1,\omega)\sqcup (S^1\times S^2, \mathcal{K}_3, u)
\end{equation*}
where $\omega$ is an arc joining $\hat{K}$ and $\mathcal{K}_1$. Do excision along two tori $\partial N(\mathcal{K}_1)$ (one in each of the two admissible triples),
we obtain
\begin{equation*}
   (S^1\times S^2, \hat{K}\sqcup \mathcal{K}_2,\omega) \sqcup  (S^1\times S^2,  \mathcal{K}_2,u)
\end{equation*}
By Theorem \ref{Texcision} we have
\begin{equation*}
  \II(S^1\times S^2, \hat{K}\sqcup \mathcal{K}_1,\omega)^{\oplus 2}\cong \II (S^1\times S^2, \hat{K}\sqcup \mathcal{K}_2,\omega)
\end{equation*}
Using the same excision trick in \eqref{BBBB}, we can get rid of the arc $\omega$ on the right hand side of the above isomorphism:
\begin{equation*}
  \II (S^1\times S^2, \hat{K}\sqcup \mathcal{K}_2,\omega;\mathbb{C})\cong \II (S^1\times S^2, \hat{K}\sqcup \mathcal{K}_2,\emptyset;\mathbb{C})=\RI(K;\mathbb{C})
\end{equation*}
The same isomorphism with $\mathbb{Z}$-coefficients can be obtained if one use the techniques in \cite{KM:Kh-unknot}*{Theorem 5.6} to obtain an excision theorem
with $\mathbb{Z}$-coefficients. Now the same excision argument used to obtain \eqref{AHI-I} shows that
\begin{equation*}
  \II(S^1\times S^2, \hat{K}\sqcup \mathcal{K}_1,\omega)\cong \II(S^3, K^\natural)\cong \II^\natural (K)
\end{equation*}
In summary, we have
\begin{PR}\label{A-T-I}
Let $K$ be a knot in $S^3$ and $\hat{K}, T(K)$ be given as above. Then
\begin{equation*}
  \AHI(\hat{K},\mathbb{C})\cong \RI(K;\mathbb{C})\cong \II^\natural(K;\mathbb{C})^{\oplus 2}
\end{equation*}
and
\begin{equation*}
  \THI(T(K))\cong \II^\natural(K;\mathbb{C})
\end{equation*}
\end{PR}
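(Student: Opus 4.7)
The plan is to prove the three claimed isomorphisms as a chain, following the template set up in the preceding paragraphs, but organized explicitly around the three excision moves available: Street's sphere excision (Theorem \ref{Sexcision}) for stripping off arcs at the cost of restricting to top eigenspaces, torus excision (Theorem \ref{Texcision}) for merging or splitting components of $\mathcal{K}_m$, and the Hopf-link excision of \eqref{AHI-I} for passing to $\II^\natural$. The starting point is definitional: $\AHI(\hat K;\mathbb{C})=\II(S^1\times S^2,\hat K\sqcup \mathcal{K}_2,u;\mathbb{C})$, and by Proposition \ref{eigen-range} this decomposes under $\muu(R)$ into eigenspaces concentrated at $\pm 1$ since a meridional disk meets $\hat K$ in one point. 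The identification \eqref{THI=AHI} gives $\THI(T(K))\cong \AHI(\hat K,1)$, and Corollary \ref{specbound} provides the symmetry $\AHI(\hat K,1)\cong \AHI(\hat K,-1)$, so $\AHI(\hat K;\mathbb{C})\cong \THI(T(K))^{\oplus 2}$.

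Second, I would insert $\RI(K)$ into the picture by applying Theorem \ref{Sexcision} to the disjoint union $(S^1\times S^2,\hat K\sqcup\mathcal{K}_2,u)\sqcup(S^1\times S^2,\mathcal{K}_3,u)$, cutting along $S^2$-slices. The second factor has one-dimensional top eigenspace by Proposition \ref{eigenvalue} (equivalently, Proposition \ref{S1S2u}), so the excision expresses $\THI(T(K))$ as the top eigenspace of $\muu(R)$ inside $\II(S^1\times S^2,\hat K\sqcup\mathcal{K}_2,\emptyset;\mathbb{C})=\RI(K;\mathbb{C})$. Combined with the first step and the spectral symmetry, summing the top two eigenspaces yields $\RI(K;\mathbb{C})\cong \AHI(\hat K;\mathbb{C})$.

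Third, to identify this with $\II^\natural(K;\mathbb{C})^{\oplus 2}$, I would perform a torus excision on $(S^1\times S^2,\hat K\sqcup\mathcal{K}_1,\omega)\sqcup(S^1\times S^2,\mathcal{K}_3,u)$ along boundary tori of neighborhoods of $\mathcal{K}_1$ and of one component of $\mathcal{K}_3$, producing $(S^1\times S^2,\hat K\sqcup\mathcal{K}_2,\omega)\sqcup(S^1\times S^2,\mathcal{K}_2,u)$; since the last factor has rank one by the earlier $\mathbf{i},\mathbf{j}$ representation computation, Theorem \ref{Texcision} gives $\II(S^1\times S^2,\hat K\sqcup\mathcal{K}_2,\omega)\cong \II(S^1\times S^2,\hat K\sqcup\mathcal{K}_1,\omega)^{\oplus 2}$. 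Another sphere excision with $(S^1\times S^2,\mathcal{K}_3,u)$ removes $\omega$ over $\mathbb{C}$, giving $\II(S^1\times S^2,\hat K\sqcup\mathcal{K}_2,\omega;\mathbb{C})\cong \RI(K;\mathbb{C})$. Finally, the excision construction of \eqref{AHI-I}, applied with $(S^3,H,w)$ of Lemma \ref{IH}, converts $\II(S^1\times S^2,\hat K\sqcup\mathcal{K}_1,\omega)$ into $\II(S^3,K^\natural,w)=\II^\natural(K)$, closing the chain.

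The main obstacle will be bookkeeping: each sphere excision is only an isomorphism after passing to the prescribed generalized eigenspace of $\muu$ of the excising surface, so every step requires verifying that the eigenvalue being selected on one side corresponds to the eigenvalue of interest on the other side, using the symmetry of Corollary \ref{specbound} and the parity of the intersection count with the cutting sphere. Once this accounting is carried out consistently, assembling the four isomorphisms produces $\AHI(\hat K;\mathbb{C})\cong\RI(K;\mathbb{C})\cong\II^\natural(K;\mathbb{C})^{\oplus 2}$ and the identification $\THI(T(K))\cong\II^\natural(K;\mathbb{C})$ as the rank-one top eigenspace factor.
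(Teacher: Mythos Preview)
Your proposal is correct and follows essentially the same approach as the paper: the proposition in the paper has no separate proof environment because the argument is precisely the sequence of excisions carried out in the paragraphs preceding it, and you have recapitulated those steps (the eigenspace decomposition \eqref{AAAA}, the sphere excision \eqref{BBBB} identifying $\THI(T(K))$ with the top eigenspace of $\RI(K;\mathbb{C})$, the torus excision producing the factor of two, the removal of $\omega$, and the Hopf-link excision to reach $\II^\natural$) in the same order and with the same justifications.
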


We have a relative $\mathbb{Z}/4$ grading on $\AHI(L)$ where $L$ is a link in $S^1\times D$.
By \cite{KM:Kh-unknot}*{Section 4.5}, there is an absolute $\mathbb{Z}/4$ grading on
$\II^\natural$. Using \eqref{AHI-Inatural}, we obtain an absolute $\mathbb{Z}/4$ grading on $\AHI$. 
The operator $\muu(R)$ is of degree $2$, so
the absolute $\mathbb{Z}/4$ grading descends to an absolute $\mathbb{Z}/2$ grading on $\AHI(L,i)$.

\subsection{Unoriented Skein Exact Triangle}\label{skein-triangle}
Let $L_2$, $L_1$ and $L_0$ be three links in $S^1\times D$ which only differ in a 3-ball by unoriented skein moves, as shown in Figure \ref{L210}.
The two links $L_1$ and $L_0$ are called the 1-smoothing and 0-smoothing of $L$ at the given crossing.
The link $L_2'$ differs from $L_2$ by a change of crossing. We can also obtain $L_2'$ by rotating $L_2$ by a quarter-turn.
\begin{figure}
\centering
\begin{tikzpicture}
\draw[thick] (1,-1) to (-1,1); \draw[thick,dash pattern=on 1.3cm off 0.25cm] (1,1) to (-1,-1);  \node[below] at (0,-1.45) {$L_2$};
\draw[dashed] (0,0) circle [radius=1.414];

\draw[thick] (2,1)  to [out=315,in=90]  (2.7,0) to [out=270,in=45]    (2,-1);  \node[below] at (3,-1.45) {$L_1$};
\draw[thick] (4,1)  to [out=225,in=90]  (3.3,0) to [out=270,in=135]   (4,-1);
\draw[dashed] (3,0) circle [radius=1.414];

\draw[thick] (5,1)  to [out=315,in=180]  (6,0.3) to [out=0,in=225]   (7,1);
\draw[thick] (5,-1)  to [out=45,in=180]  (6,-0.3) to [out=0,in=135]   (7,-1);  \node[below] at (6,-1.45) {$L_0$};
\draw[dashed] (6,0) circle [radius=1.414];

\draw[thick,dash pattern=on 1.3cm off 0.25cm] (10,-1) to (8,1); \draw[thick] (10,1) to (8,-1);  \node[below] at (9,-1.45) {$L_2'$};
\draw[dashed] (9,0) circle [radius=1.414];

\end{tikzpicture}
\caption{}\label{L210}
\end{figure}

By Theorem 6.8 (and the discussion below it) in \cite{KM:Kh-unknot}, we have a cyclic exact sequence
\begin{equation*}
  \cdots \to \AHI(L_2)\to \AHI(L_1)\to \AHI(L_0) \to \AHI(L_2) \to \cdots
\end{equation*}
The maps in the exact sequence are induced by cobordisms $(W, S_{i+1,i})$ where $W$ is the product cobordism $I\times S^1\times S^2$ and $S_{i+1,i}\subset W$
is the  cobordism from $L_{i+1}$ to $L_{i}$
Therefore these maps
(after complexification) commute with $\muu(R)$ where $R$ is again the $S^2$-slice in $S^1\times S^2$. Take the generalized eigenspaces, we have the
exact sequence
\begin{equation*}
  \cdots \to \AHI(L_2,i)\to \AHI(L_1,i)\to \AHI(L_0,i) \to \AHI(L_2,i) \to \cdots
\end{equation*}
Similarly, we have the exact sequence
\begin{equation*}
  \cdots \to \AHI(L_2',i)\to \AHI(L_0,i)\to \AHI(L_1,i) \to \AHI(L_2',i) \to \cdots
\end{equation*}
From those exact sequences, we have
\begin{equation*}
  \dim \AHI(L_2,i)=\dim \AHI(L_1,i)+\dim \AHI(L_0,i)=\dim \AHI(L_2',i) ~(\text{mod}~2)
\end{equation*}
In summary, the parity of the dimension of the annular instanton Floer homology of a link at a fixed degree is invariant under crossing-change.
\begin{PR}\label{odd-AHI}
Let $L$ be a link in $S^1\times D$. If $\dim\AHI(L,i)$ is odd for some $i$, then $L$ contains no null-homologous component.
\end{PR}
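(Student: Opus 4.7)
My plan is to prove the contrapositive: if $L$ has a null-homologous component, then $\dim_{\mathbb{C}}\AHI(L,i)$ is even for every $i$. The argument has two main steps. First, I would reduce $L$, by a finite sequence of unoriented crossing changes, to a split link $L'\sqcup U_1$ in which $U_1$ is an unknot contained in a $3$-ball disjoint from $L'$. Second, I would combine the parity consequence of the skein exact triangle from Section \ref{skein-triangle} with the split formula of Proposition \ref{link-sum}.

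For the reduction, let $K_0$ be a null-homologous component. Since $\pi_1(S^1\times D)\cong H_1(S^1\times D)\cong\mathbb{Z}$, the component $K_0$ is null-homotopic in $S^1\times D$ and hence bounds an immersed disk. A generic null-homotopy contracts $K_0$ through this disk to a small loop $U_1$ sitting in a $3$-ball disjoint from $L\setminus K_0$; its singular configurations are finitely many transverse self-intersections of $K_0$ together with transverse intersections of $K_0$ with strands of $L\setminus K_0$. Each singular event can be absorbed by a single unoriented crossing change in some diagram of $L$ (of $K_0$ with itself, or of $K_0$ with another component). After all these moves, $L$ is replaced by $L'\sqcup U_1$ with $L'=L\setminus K_0$.

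For the second step, Section \ref{skein-triangle} already shows that $\dim\AHI(L_2,i)\equiv\dim\AHI(L_2',i)\pmod 2$ whenever $L_2$ and $L_2'$ differ by a single crossing change. Iterating along the sequence of reductions yields
\[
\dim\AHI(L,i)\equiv\dim\AHI(L'\sqcup U_1,i)\pmod 2.
\]
From Example \ref{AHIU1K1} combined with Proposition \ref{eigen-range} (applied to a disk slice $\{\pt\}\times D$ disjoint from $U_1$, for which $m=0$), I know $\AHI(U_1)\cong\mathbb{C}^2$ is concentrated in degree $0$. Proposition \ref{link-sum}, together with the intertwining of $\muu(R)$ with $\muu(R_1)\otimes 1+1\otimes\muu(R_2)$, then decomposes
\[
\AHI(L'\sqcup U_1,i)\cong\AHI(L',i)\otimes\AHI(U_1,0),
\]
which has dimension $2\dim\AHI(L',i)$, an even number.

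The main obstacle will be carefully verifying the first step: one must show that the generic null-homotopy of $K_0$ is realized by a sequence of \emph{unoriented crossing changes} in a diagram of $L$, rather than more exotic local moves. This reduces to a standard general-position argument: perturb the immersed bounding disk so that it has only transverse double points and is transverse to $L\setminus K_0$; then in a suitable planar projection of $L$, each such double point or intersection becomes a crossing whose sign can be changed, and iterating these crossing changes realizes the desired reduction.
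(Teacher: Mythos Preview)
Your proposal is correct and follows essentially the same route as the paper's proof: both argue by contrapositive/contradiction, use crossing changes to unknot and split off the null-homologous component, invoke the parity invariance under crossing change from Section~\ref{skein-triangle}, and finish with Proposition~\ref{link-sum} together with the fact that $\AHI(U_1;\mathbb{C})$ is $2$-dimensional and supported at degree $0$. The only difference is that you supply a more detailed justification of the crossing-change reduction (via the immersed bounding disk and general position), which the paper simply asserts.
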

\begin{proof}
Suppose $L$ contains a null-homologous component $K$. After changing crossing, we can make $K$ into an unknot $U_1$  and split it from the other components of $L$. This
process does not change the parity of $\AHI$. Denote the new link by $\tilde{L}$, then we have $\tilde L = U_1\sqcup L'$. By  \eqref{unlink-AHI}, we know
$\AHI(U_1,\mathbb{C})$ is supported at degree $0$. Therefore by Proposition \ref{link-sum}, we have
\begin{equation*}
  \AHI(\tilde L,i)=\AHI(L',i)\otimes \AHI(U_1,0)=\AHI(L',i)\otimes \mathbb{C}^2
\end{equation*}
is even-dimensional. Since $\dim \AHI(L,i)$ and $\dim \AHI(L',i)$ have the same parity, we obtain a contradiction.
\end{proof}
Using \eqref{THI=AHI}, we can also obtain the following.
\begin{PR}
Let $T\subset I\times D$ be a balanced admissible tangle. Then $T$ is vertical if and only if $\THI(T)$ is odd-dimensional.
Suppose $|T\cap D^+|$ is odd, then $T$ is vertical if and only if $\THIo(T)$ is odd-dimensional.
\end{PR}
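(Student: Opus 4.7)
The plan is to prove both halves of the theorem by combining a topological dichotomy for the closure $\hat T$ with parity invariance under crossing changes (supplied by the unoriented skein triangle of Section~\ref{skein-triangle}) and the braid-detection results already in hand (Corollary~\ref{braid-detection} and Theorem~\ref{odd-braid-detection}).

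The first step is a topological observation: $T$ is vertical if and only if $\hat T\subset S^1\times D$ contains no null-homologous component. Running along any closed component of $\hat T$, one alternates between $T$-strands and closure arcs lying in $(S^1\setminus I)\times D$; a vertical strand paired with its closure arc contributes one full turn around $S^1$, whereas a closed component of $T$ or a chain through cups and caps contributes zero net winding. So vertical tangles give rise only to components of nonzero winding, and non-vertical tangles always produce at least one null-homologous loop.

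For the $\THI$ statement, the non-vertical direction is then immediate: Proposition~\ref{odd-AHI} applied to $\hat T$ forces $\dim_{\mathbb{C}}\AHI(\hat T,i)$ to be even for every $i$, and by \eqref{THI=AHI} this gives $\dim_{\mathbb{C}}\THI(T)$ even. For the vertical direction, I would use the fact that any vertical tangle $T$ with permutation $\sigma$ is related, by a finite sequence of crossing changes and ambient isotopies rel $\partial(I\times D)$, to any braid $\beta_\sigma$ realizing $\sigma$: two vertical tangles with the same permutation are arc-homotopic relative to their endpoints, and any such homotopy is realized by crossing changes together with isotopy. A crossing change inside $T$ is a crossing change in $\hat T\subset S^1\times D$, and the unoriented skein triangle restricted to the third-grading summand shows that the parity of $\dim_{\mathbb{C}}\AHI(\hat T,m)$ is invariant under the reduction. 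Combined with $\THI(\beta_\sigma)\cong\mathbb{C}$ from Corollary~\ref{braid-detection}, this yields $\dim_{\mathbb{C}}\THI(T)\equiv 1\pmod 2$.

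The $\THIo$ statement (when $m$ is odd) follows the same outline with the admissible triple $(S^1\times S^2,\hat T,\emptyset)$ in place of $(S^1\times S^2,\hat T\sqcup\mathcal{K}_2,u)$: the unoriented skein triangle is equally available there, Theorem~\ref{odd-braid-detection} supplies $\dim_{\mathbb{C}}\THIo(\beta_\sigma)=1$, and the only piece that is not yet in place is a $\THIo$-analog of Proposition~\ref{odd-AHI}. For this I would reduce by crossing changes to a split form $\hat T=U_1\sqcup\hat T'$ and then exhibit an isomorphism $\II(S^1\times S^2,U_1\sqcup\hat T',\emptyset;\mathbb{C})\cong\II(S^1\times S^2,\hat T',\emptyset;\mathbb{C})\otimes V$ with $\dim_{\mathbb{C}}V=2$. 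Since the torus excision of Theorem~\ref{Texcision} requires a nontrivial bundle class, the factor $V$ is produced by first adjoining the auxiliary triple $(S^1\times S^2,\mathcal{K}_3,u)$, applying the torus-excision argument of Proposition~\ref{link-sum} inside this enlarged setting, and then eliminating the auxiliary triple via Theorem~\ref{Sexcision} and Proposition~\ref{S1S2u}. Orchestrating this introduce-and-remove excision so as to peel off the split unknot factor in the absence of a marking arc is the main technical obstacle; every other ingredient is already assembled in the excerpt.
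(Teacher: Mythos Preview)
Your argument for the forward direction (vertical $\Rightarrow$ odd-dimensional) is fine and matches the paper: crossing-change a vertical tangle to a braid, invoke parity invariance from the skein triangle, and use that $\THI$ of a braid is one-dimensional.

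The gap is in your topological claim for the reverse direction. You assert that a non-vertical tangle ``always produces at least one null-homologous loop'' in $\hat T$, but this is false for general closures. Take $m=4$ with a cap on $a_1,a_2\in D^+$, a cup on $b_1,b_2\in D^-$, and two vertical strands $b_3$--$a_3$, $b_4$--$a_4$. With the closure $a_1\sim b_2$, $a_2\sim b_3$, $a_3\sim b_4$, $a_4\sim b_1$, the link $\hat T$ is a single knot of winding number~$2$, hence has no null-homologous component. Your winding analysis is correct as far as it goes---caps and cups contribute nothing---but a component of $\hat T$ containing a cap can also contain vertical strands, and those can all be traversed in the same direction. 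So Proposition~\ref{odd-AHI} does not apply to $\hat T$ in general.

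There are two ways out. One is to observe that $\THI(T)$ is independent of the closure and then \emph{choose} the closure: if $T$ has a cap $C$ and a cup $C'$, match their endpoints so that $C\cup C'$ becomes a null-homologous unknot in $\hat T$, and then Proposition~\ref{odd-AHI} does apply. You did not make this move. The paper takes the second route, which avoids the topological claim entirely: if $T$ has a cap $C$, use isotopy and crossing changes to push $C$ into $(1-\epsilon,1]\times D$, so that the slice $\{1-\epsilon\}\times D$ meets the new tangle $T'$ in only $m-2$ points. Proposition~\ref{eigen-range} then forces $\AHI(\hat{T'},m)=0$, contradicting odd parity. This argument also transfers directly to $\THIo$ via Corollary~\ref{specbound}, which is why the paper can dispose of the second statement with ``similar'' and you do not need the excision gymnastics you sketch for an odd analogue of Proposition~\ref{odd-AHI} in the cap case.
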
 
\begin{proof}
Suppose $|T\cap D^+|=m$. Then we have
\begin{equation*}
  \THI(T)\cong \AHI(\hat{T},m)
\end{equation*}
where $\hat{T}$ is the link obtained by closing $T$. So we know the parity of $\dim \THI(T)$ is invariant under crossing-change.

If $T$ is vertical, then $T$ could be turned into a braid by crossing-change, which has 1-dimensional instanton Floer homology. Hence $\THI(T)$ is also odd-dimensional.

Now suppose $\THI(T)$ is odd-dimensional. A closed component of $T$ becomes a null-homologous component of $\hat{T}$, which makes $\AHI(\hat{T},m)$ even-dimensional by
Proposition \ref{odd-AHI}. Therefore $T$ has no closed component. Assume there is a component $C$ of $T$ with both end points contained in
$D^+$. Move $C$ by isotopy and crossing-change, we can turn $C$ into an unknotted arc included in $(1-\epsilon, 1]\times D$ for arbitrarily small $\epsilon$.
When $\epsilon$ is small enough, the slice $\{1-\epsilon\}\times D$ intersects the new tangle $T'$ at $(m-2)$ points. So
there is a $S^2$-slice in $S^1\times S^2$ intersecting $\hat{T'}$ at $(m-2)$ points. By Proposition \ref{eigen-range} we know
\begin{equation*}
  \THI(T')\cong\AHI(\hat{T'},m)=0
\end{equation*}
which contradicts the parity assumption on $\dim \THI(T)$. Therefore we can conclude that $T$ is vertical.

The proof of the second part of the proposition is similar. 
\end{proof}
Now we can strengthen Corollary \ref{braid-detection} and Theorem \ref{odd-braid-detection} by removing the verticality assumption on $T$.
\begin{COR}
Let $T\subset I\times D$ be a balanced admissible tangle. Then $\THI(T)=\mathbb{C}$ if and only if  $T$ is isotopic to a braid.
Suppose $|T\cap D^+|$ is odd, then $\THIo(T)=\mathbb{C}$ if and only if  $T$ is isotopic to a braid.
\end{COR}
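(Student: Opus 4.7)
The plan is to reduce both equivalences to the already-proved vertical cases, Corollary \ref{braid-detection} and Theorem \ref{odd-braid-detection}, by invoking the parity proposition immediately preceding this corollary. Only one direction needs work in each case: if $T$ is isotopic to a braid then $T$ is vertical (any braid is vertical by definition), so applying Corollary \ref{braid-detection} (respectively Theorem \ref{odd-braid-detection}) immediately gives $\THI(T) \cong \mathbb{C}$ (respectively $\THIo(T) \cong \mathbb{C}$). The content is in the converse.

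For the converse in the $\THI$ case, I would argue as follows. Suppose $\THI(T) \cong \mathbb{C}$. Then $\dim_{\mathbb{C}} \THI(T) = 1$, which is odd. By the parity proposition just stated (the one characterizing vertical tangles via odd-dimensionality of $\THI$), this forces $T$ to be vertical. Now $T$ is a \emph{vertical} balanced admissible tangle with $\THI(T) \cong \mathbb{C}$, so Corollary \ref{braid-detection} applies and concludes that $T$ is isotopic to a braid.

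The converse in the $\THIo$ case is completely parallel: the hypothesis $\THIo(T) \cong \mathbb{C}$ gives $\dim_{\mathbb{C}} \THIo(T) = 1$ (odd), and the second half of the parity proposition (which assumed $|T \cap D^+|$ odd, as we do here) implies $T$ is vertical. Theorem \ref{odd-braid-detection} then upgrades verticality plus $\THIo(T) \cong \mathbb{C}$ to the statement that $T$ is isotopic to a braid.

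There is essentially no obstacle here beyond verifying that the hypotheses of the parity proposition line up correctly: in the odd-$\THIo$ case we have to check that the assumption $|T \cap D^+|$ odd, which is needed to even define $\THIo(T)$, is exactly the assumption used in the second half of the parity proposition, so nothing is lost. Once this bookkeeping is done, the corollary is immediate from the two ingredients already established.
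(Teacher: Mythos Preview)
Your proposal is correct and is exactly the argument the paper intends: the corollary is stated immediately after the parity proposition precisely so that one can combine odd-dimensionality with the vertical cases (Corollary \ref{braid-detection} and Theorem \ref{odd-braid-detection}) as you do. The paper leaves the proof implicit, and your write-up fills in the one-line reduction faithfully.
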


\subsection{A non-vanishing result}
All the instanton Floer homologies in this subsection are taken with complex coefficients.
Let $L\subset S^1\times D$ be an oriented link with all the components null-homologous. Pick a $D^2$-slice $D$ of $S^1\times D$ which
intersects $L$ transversely. 
For a component $K$ of $L$, if two consecutive intersection  points in $ K\cap D$ have different signs, then we can resolve
those two intersection points by doing surgery on $D$ along the arc connecting the two points in $K$. This is the same as puncturing
$D$ at the two intersection points and adding a handle $S^1\times \text{arc}$ to $D$. This operation increases the genus of $D$ by 1. 
Since $K$ is null-homologous, the number of positive intersection points equals the number of negative intersection points. 
So we can resolve all the intersection points in $L\cap D$ and obtain an admissible surface for $L$.
Recall that an admissible surface for $L$ is  a
properly embedded connected orientable surface $\Sigma$ in $S^1\times D$ 
which satisfies $\partial \Sigma\cong S^1$, $\Sigma\cap L=\emptyset$ and $[\Sigma,\partial \Sigma]$
generates $H_2(S^1\times D, \partial(S^1\times D);\mathbb{Z})\cong \mathbb{Z}$. 
The existence of such surfaces relies on
the null-homologous assumption on $L$. 

Now assume $\Sigma\subset S^1\times D$ is an admissible surface for $L$ with \emph{minimal genus}. The complement 
$$M_0=S^1\times D\setminus N(\Sigma)\cup N(L)$$ 
is a manifold with boundary 
$$\Sigma^+\cup \Sigma^-\cup I\times S^1\cup \partial N(L)$$
where $\Sigma^{\pm}$ are two copies of $\Sigma$ and $I\times S^1$ is annulus in $S^1\times \partial D$. 
We want to make $M_0$ into a balanced sutured manifold. 
For each component $K$ of $L$, add two oppositely-oriented meridian sutures to $\partial N(K)\subset \partial M_0$. 
Add a suture $\{\pt\}\times S^1$ in the annulus $I\times S^1\subset \partial M_0$. Denote those sutures by 
$\gamma_0$, we obtain a sutured manifold $(M_0,\gamma_0)$. 

Following the notation in \cite{KM:suture}, we have the (oriented) decomposition 
\begin{equation*}
\partial M_0= R_+(\gamma_0) \cup A(\gamma_0) \cup (-R_-(\gamma_0))
\end{equation*} 
where $A(\gamma_0)$ is a union of annuli, which are neighborhoods of the sutures.  
Suppose $L=K_1\cup\cdots\cup K_l$ where $K_i$ is a component of $L$. Each component $K_i$ contributes two annuli 
$A_i^+\subset R_+(\gamma_0)$ and  $A_i^-\subset R_-(\gamma_0)$ which lie in the complement of 
the two meridian sutures in $\partial N(K_i)$. We have
\begin{equation*}
R_\pm(\gamma_0)=\Sigma_\pm \cup \bigcup_i A_i^\pm
\end{equation*} 
\begin{PR}\label{M0-taut}
Suppose there is no 3-ball in $S^1\times D$ which contains some components 
of $L$ and is disjoint from the other components. Then 
the sutured manifold $(M_0,\gamma_0)$ constructed as above is taut. 
\end{PR}     
\begin{proof}

According to the definition in \cite{G:Sut-1}, to show $(M_0,\gamma_0)$ is taut we need to check three things:
\begin{itemize}
  \item The manifold $M_0$ is irreducible.
  \item The surface $R^{\pm}(\gamma_0)$ minimizes the Thurston norm of the class 
        $$[R^{\pm},\partial R^{\pm}] \in H_2 (M_0,A(\gamma_0))$$
  \item The surface $R^{\pm}(\gamma_0)$ is incompressible.
\end{itemize}

If  $M_0$ is not reducible, then there is a 2-sphere $S\subset M_0$ which does not
bound a 3-ball in $M_0$. Since $M_0\subset S^1\times D$ and $S^1\times D$
is irreducible, $S$ bounds a 3-ball $B$ in $S^1\times D$.
From the construction of $M_0$ we see that $B$ contains some components
of $L$, which contradicts the assumption.

Given a properly embedded surface $(F,\partial F)\subset (M_0,A(\gamma_0))$, the norm is defined by
\begin{equation*}
x(F):=\sum_i\max \{0, -\chi(F_i)\}
\end{equation*}
where $\{F_i\}$ are the connected components of $F$. Given a homology class in $H_2(M_0,A(\gamma_0))$, its Thurston norm is
defined as
\begin{equation*}
x(\alpha):=\min_{[F]=\alpha} x(F)
\end{equation*}
It is clear that $x(R_\pm)=x(\Sigma)$ since all the other components of $R_\pm$ are of norm 0. 
Suppose there is  
properly embedded surface $F$ whose  norm is smaller than $\Sigma$ 
and $(S,\partial S)\subset (M_0, A(\gamma_0))$ represents the same
relative homology class as $(R_{\pm},\partial R_{\pm})$ in $H_2 (M_0, A(\gamma_0))$. So $\partial F$ represents the same
homology class as $\partial R^{\pm}$ in $H_1(A(\gamma_0))$. 
We attach an annulus to the inner-most pair of oppositely-oriented circles of $\partial F$
and ``press'' this annulus 
into the interior
of $M_0$. 
We can keep doing this until there is only one circle left. Finally we obtain a new surface $\tilde{F}$ with only one boundary component
in each annulus in $A(\gamma_0)$.

If $F$ has a disk component, then the boundary of this disk must lie in $I\times S^1\subset S^1\times \partial D$ since
the meridian sutures are homologically non-trivial in $M_0$. If we glue $\Sigma_+$ and $\Sigma_-$, then this disk 
is a properly embedded surface in $S^1\times D$ which is disjoint from $L$. It has genus 0. By assumption, the genus of $\Sigma$
must also be $0$. Hence $x(R_{\pm})=x(\Sigma)=0$ and the norm of $F$ cannot be smaller than $x(R_\pm)$.

 If $F$ has no disk component, then $F$ and $\tilde{F}$ have the same norm. Throw away any sphere component in
$\tilde{F}$ and identify $\Sigma_+$ and $\Sigma_-$ in $M_0$, we obtain a surface $F'$ in $S^1\times D \setminus N(L)$ 
whose norm is smaller than $\Sigma$ and $\partial F$ consist of the meridian sutures (isotoping $F'$ if necessary) and
a circle in  $S^1\times \partial D$ which is null-homologous in $S^1\times D$. Now we pick annuli in $N(K_i)$  joining the
meridian sutures and add those annuli to $F'$ so that we obtain a new surface $F''$ with only one boundary component.
Since $F$ has no disk component, so does $F'$. Therefore $F''$ has the same norm as $F'$, which is smaller than $x(\Sigma)$.
The component of $F''$ with boundary is an admissible surface in $S^1\times D$. Its genus
is smaller than the genus $\Sigma$, which contradicts the assumption on $\Sigma$. We have $R_\pm$ is norm-minimizing. 

If there is a compressing disk $H$ for $R_\pm$, then we compress $R_\pm$ along $H$ to obtain a surface $R_\pm'$ with 
$\chi (R'_\pm)=\chi (R_\pm)+2$. The norm of
$R'_\pm$ is strictly smaller than the norm of $R_\pm$ unless $H$ bounds a meridian in an annulus component of $R^+$. 
Since we have shown that $R_\pm$ is norm-minimizing, we must have $H$ bounds a meridian in an annulus component of $R^+$.
However the meridians of the annulus components of $R_\pm$ are homologically non-trivial in $M_0$, 
hence we obtain a contradiction. This shows 
that $R_\pm$ is incompressible. 
\end{proof}

Again we pick an admissible surface $\Sigma\subset S^1\times D$ for $L$. 
This time $\Sigma$ is not necessarily genus-minimizing. 
We can still define a sutured manifold $(M_0,\gamma_0)$ as before. But $(M_0,\gamma_0)$ is not necessarily taut.
Recall that
we use an admissible triple $(S^1\times S^2, L\sqcup\mathcal{K}_2,u)$ to define $\AHI(L)$. This $S^1\times S^2$ is obtained by gluing
another copy of $S^1\times D$ to the original copy. We may assume $\partial \Sigma$ is contained in a $D$-slice in $S^1\times D$ hence
we can obtain a closed surface $\bar{\Sigma}:=\Sigma\cup D$ in $S^1\times S^2$. This surface $\bar{\Sigma}$ intersects $\mathcal{K}_2$
transversely at two points. We add a small meridian circle for each component of $L$ and add 
a small arc joining the meridian and the component of $L$. Denote the union of these meridians by $m_L$ and the union of these arcs by $u_L$.
We have a new admissible triple $(S^1\times S^2, L\cup m_L\cup \mathcal{K}_2, u+u_L)$. We use
\begin{equation*}
\II(S^1\times S^2, L\cup m_L\cup \mathcal{K}_2, u+u_L|\bar{\Sigma})
\end{equation*} 
to denote the generalized eigenspace of $\muu(\bar{\Sigma})$ with eigenvalue $2g(\bar{\Sigma})$. Notice that $\muu(\bar{\Sigma})$
is the same as $\muu(R)$ since $\bar{\Sigma}$ is homologous to the $S^2$-slice $R$.
\begin{PR}\label{suture=singular}
We have
\begin{equation*}
\SHI (M_0,\gamma_0)\cong \II(S^1\times S^2, L\cup m_L\cup \mathcal{K}_2, u+u_L|\bar{\Sigma})
\end{equation*}
where $\SHI(M_0,\gamma_0)$ is the instanton Floer homology for sutured manifolds defined in \cite{KM:suture}.
\end{PR}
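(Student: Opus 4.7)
The plan is to exhibit the admissible triple $(S^1\times S^2,\, L\cup m_L\cup\mathcal{K}_2,\, u+u_L)$ together with the surface $\bar{\Sigma}$ as a singular-instanton closure of the balanced sutured manifold $(M_0,\gamma_0)$, and then to invoke the closure-independence of $\SHI$. The bulk of the argument is to reverse each of the cuts used in defining $M_0$ and to translate the surviving suture data into singular bundle data, following the general mechanism of \cite{KM:Alexander}*{Section 2.3}.

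Piece by piece: the boundary of $M_0$ decomposes as the union of $\Sigma_{\pm}$, the annuli $A_i^{\pm}\subset \partial N(K_i)$ for each component $K_i$ of $L$, and the annulus $I\times S^1\subset S^1\times \partial D$, joined along the three families of sutures. Gluing $\Sigma_+$ to $\Sigma_-$ by the identity recovers $S^1\times D\setminus N(L)$. For each $K_i$ I fill $N(K_i)$ back in and promote $K_i$ to singular locus, adding a small meridian $m_i$ and an arc $u_i$ joining $m_i$ to $K_i$ as bundle data; this is the standard conversion of a pair of oppositely-oriented meridian sutures on a link complement into singular data on the closure. Gluing in the outer copy of $S^1\times D$ equipped with $\mathcal{K}_2$ as singular locus and the arc $u$ as bundle data absorbs the remaining annular suture $\{\mathrm{pt}\}\times S^1$. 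The resulting closed admissible triple is exactly $(S^1\times S^2,\, L\cup m_L\cup\mathcal{K}_2,\, u+u_L)$, in which $\Sigma_{\pm}$ have glued to a single copy of $\Sigma$ whose boundary is capped off by a $D^2$-slice of the outer $S^1\times D$, forming $\bar{\Sigma}=\Sigma\cup D$. This closed surface has genus $g(\Sigma)$, meets the singular locus transversally in the two points $\bar{\Sigma}\cap\mathcal{K}_2$, and is disjoint from $L\cup m_L$. Combining Corollary \ref{specbound} with the standard eigenvalue shift induced by singular intersections, the top eigenvalue of $\muu(\bar{\Sigma})$ is $2g(\Sigma)-2+2=2g(\bar{\Sigma})$, and the singular-bundle-data analogue of Kronheimer--Mrowka's closure theorem from \cite{KM:suture} identifies $\SHI(M_0,\gamma_0)$ with the corresponding top generalized eigenspace, which is exactly the right-hand side of the proposition.

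The principal obstacle is to verify that the constructed triple is a genuine closure in the sense required by the closure-independence theorem for singular $\SHI$. Admissibility is immediate from the $S^2$-slice meeting $\mathcal{K}_2$ transversally in two points, but one still has to confirm that this closure has sufficiently large topology to avoid reducibles (which can be arranged by stabilizing the auxiliary closure data if $g(\Sigma)$ is small) and to pin down the sign and grading conventions for $\muu$ on a non-spherical surface with singular intersections, in parallel with Street's treatment in Section \ref{tangle}. The tautness of $(M_0,\gamma_0)$ established in Proposition \ref{M0-taut} provides the nondegeneracy needed on the sutured side, and once this bookkeeping is complete the claimed isomorphism follows by direct comparison of the two definitions.
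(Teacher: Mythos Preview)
Your outline has the right conceptual picture, but there is a genuine gap: you never actually establish the isomorphism, and the ``singular-closure'' theorem you invoke is not available in the form you need.  The definition of $\SHI$ in \cite{KM:suture} is via closures of the form $(\bar Y,\emptyset,\omega)$ with \emph{empty} singular locus; the triple $(S^1\times S^2,\,L\cup m_L\cup\mathcal{K}_2,\,u+u_L)$ has nonempty singular locus, so it is not literally a closure of $(M_0,\gamma_0)$ in the sense for which closure-independence is proved.  You acknowledge this as the ``principal obstacle'' and then assert that the bookkeeping can be completed, but that bookkeeping \emph{is} the proof.

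The paper's argument is not a direct appeal to closure-independence but rather an explicit reduction via torus excision (Theorem~\ref{Texcision}).  First one excises along the two tori $\partial N(\mathcal{K}_2)$, which trades the pair $(\mathcal{K}_2,u)$ for an honest handle and yields an isomorphism $\II(S^1\times S^2,\,L\cup m_L\cup\mathcal{K}_2,\,u+u_L)\cong\II(S^1\times\Sigma_2,\,L\cup m_L,\,\bar u+u_L)$ intertwining $\muu(\bar\Sigma)$ with $\mu(\Sigma_2)$.  Then for each component $K_i$ one excises along $\partial N(K_i)$ and $\partial N(m_i)$, sending the longitude of $K_i$ to the meridian of $m_i$; this removes the remaining singular locus and produces a closed triple $(Y(L),\emptyset,\bar u+\bar u_L)$.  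Only now is one in the non-singular setting, and the paper checks directly that $Y(L)$ together with the image $\bar\Sigma'$ of the surface is a bona fide closure of $(M_0,\gamma_0)$ in the sense of \cite{KM:suture}, so that \cite{KM:Alexander}*{Section~2.3} applies.  Tracking the eigenvalue shift $2g(\bar\Sigma)\mapsto 2g(\bar\Sigma')-2$ through the excisions finishes the identification.

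One further correction: you invoke Proposition~\ref{M0-taut} for ``nondegeneracy on the sutured side,'' but the hypothesis of the present proposition explicitly drops the minimal-genus assumption on $\Sigma$, so $(M_0,\gamma_0)$ need not be taut here.  Tautness plays no role in this isomorphism; it is only used later, in the proof of Theorem~\ref{AHI-non-vanish}, to guarantee $\SHI(M_0,\gamma_0)\neq 0$.
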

\begin{proof}
Do excision to 
$$
(S^1\times S^2, L\cup m_L\cup \mathcal{K}_2, u+u_L|\bar{\Sigma})
$$
along the two boundary tori $\partial N(\mathcal{K}_2)$ and apply Theorem \ref{Texcision}, we have
\begin{equation}\label{S2=Sigma2}
\II(S^1\times S^2, L\cup m_L\cup \mathcal{K}_2, u+u_L)\cong 
\II(S^1\times \Sigma_2, L\cup m_L, \bar{u}+u_L)
\end{equation} 
where $\Sigma_2$ is a genus 2 surface obtained by puncturing $S^2$ at two points and adding a handle and
$\bar{u}$ is a closed non-separating circle in a $\Sigma_2$-slice.  Notice that we throw away a triple
$(S^1\times S^2,\mathcal{K}_2,u)$ with 1-dimensional Floer homology in the excision. 
We will keep doing this in the argument to simplify the notation.
For a component $K_i$ of $L$ with meridian $m_i\subset m_L$, let $N(K_i)$  and $N(m_i)$ be small tubular neighborhoods which are 
disjoint from each other.
 We do excision along $\partial N(K_i)$ and $\partial N(m_i)$ using
a diffeomorphism $h:\partial N(K_i)\to \partial N(m_i)$ that maps the longitude of $K_i$  to the meridian of $m_i$. 
After doing excision to all the components, again by Theorem \ref{Texcision}, we have
\begin{equation}\label{Sigma2=YL}
\II(S^1\times \Sigma_2, L\cup m_L, \bar{u}+u_L)\cong 
\II(Y(L), \emptyset, \bar{u}+\bar{u}_L)_{\mu(x),2}
\end{equation}
where $Y(L)$ is the manifold obtained by excision and $\bar{u}_L$ is the union of closed circles obtained from $u_L$ and the excision.

On the other hand, to calculate $\SHI (M_0,\gamma_0)$, we need to close up $M_0$ first. Recall
that $A(\gamma_0)$ consists of an annulus $I\times S^1$ between $\Sigma_+$ and $\Sigma_-$ on $\partial M_0$ and 
two annuli on $\partial N(K_i)$ for each component $K_i$. Let $T_0$ be genus one surface with one boundary component and 
$T_i$ ($1\le i\le l$) be an annulus. We form product sutured manifold $I\times T_i$ ($0\le i\le l$) and attach them to
$M_0$ along sutures to obtain
\begin{equation*}
M_1=M_0\cup_{A(\gamma_0)} \bigcup_{0\le i\le l} I\times T_i
\end{equation*}
To be more precise, we identify $I\times \partial T_0$ with $I\times S^1\subset \partial M_0$ and $I\times \partial T_i$ ($1\le i\le l$)
with with the two annuli on $\partial N(K_i)$. We have
\begin{equation*}
\partial M_1= \bar{R}_+ \cup \bar{R}_-
\end{equation*}
where $\bar{R}_\pm$ consist of $\bar{\Sigma}_\pm:=\{\pm 1\}\times T\cup \Sigma_\pm$ and 
a torus $S_i^\pm$ for each component $K_i$ of $L$. 
According to the discussion in \cite{KM:suture}*{Section 5.1},  $S_i^+$ and $S_i^-$ can be viewed as 
$\partial N(K_i)$ and $\partial N(m_i)$ respectively. Now we pick an identification $\bar{R}_+\to \bar{R}_-$ which identifies 
 $\bar{\Sigma}_+$ with $\bar{\Sigma}_-$ in the obvious way and identifies $S_i^+$ with $S_i^-$
 in the same way we construct $Y(L)$ (view them as $\partial N(K_i)$ and $\partial N(m_i)$).  
 It is clear the resulting manifold is again $Y(L)$. 
 We denote  the image of $\bar{\Sigma}_\pm$ in $Y(L)$ by $\bar{\Sigma}'$. The image of $I\times T_0$ in $Y(L)$ is the product
 $S^1\times T_0$. We use $c$ to denote a circle $S^1\times \{t\}\subset S^1\times T_0$. 
 By \cite{KM:Alexander}*{Section 2.3}, we have
\begin{equation}\label{YL-suture}
\SHI(M_0,\gamma_0)\cong \II(Y(L), \emptyset, c+\bar{u}_L|\bar{\Sigma}')
\end{equation}
where 
$\II(Y(L), \emptyset, c+\bar{u}_L|\bar{\Sigma}')$ is the 
 simultaneous generalized eigenspace for the operators $\mu(\bar{\Sigma}')$, $\mu(x)$ for the pair of
eigenvalues $(2g(\bar{\Sigma}'),2)$. The argument in \cite{KM:suture}*{Section 7.4} can be used to show that 
\begin{equation}\label{suture-c=u}
 \II(Y(L), \emptyset, c+\bar{u}_L|\bar{\Sigma}')\cong \II(Y(L), \emptyset, \bar{u}+\bar{u}_L|\bar{\Sigma}')
\end{equation}
The isomrophism in \eqref{Sigma2=YL} is induced by a cobordism in which $\Sigma_2$ and $\bar{\Sigma}'$ are homologous. So we have
\begin{equation*}
\II(S^1\times \Sigma_2, L\cup m_L, \bar{u}+u_L)_{\mu(\Sigma_2),2g(\bar{\Sigma}')-2}\cong 
\II(Y(L), \emptyset, \bar{u}+\bar{u}_L|\bar{\Sigma}')
\end{equation*}
Combined with \eqref{YL-suture} and \eqref{suture-c=u}, we have
\begin{equation}\label{SHI-YL}
\SHI(M_0,\gamma_0)\cong \II(S^1\times \Sigma_2, L\cup m_L, \bar{u}+u_L)_{\mu(\Sigma_2),2g(\bar{\Sigma}')-2}
\end{equation}
The isomorphism in \eqref{S2=Sigma2} intertwines the operator $\muu(\bar{\Sigma})$ with $\mu(\Sigma_2)$. 
Since $g(\bar{\Sigma}')=g(\bar{\Sigma})+1$,  we have
\begin{equation*}
\II(S^1\times S^2, L\cup m_L\cup \mathcal{K}_2, u+u_L|\bar{\Sigma})\cong 
\II(S^1\times \Sigma_2, L\cup m_L, \bar{u}+u_L)_{\mu(\Sigma_2),2g(\bar{\Sigma}')-2}
\end{equation*} 
Together with \eqref{SHI-YL}, the proof is complete. 
\end{proof}

Our main result for this subsection is
\begin{THE}\label{AHI-non-vanish}
Suppose $L\subset S^1\times D$ is a link with all the components null-homologous and $\Sigma$ is an admissible surface with minimal
genus in $S^1\times D$. Then we have
\begin{equation*}
\AHI(L,\pm 2g(\Sigma)) \neq 0
\end{equation*} 
\end{THE}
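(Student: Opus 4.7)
The plan is to reduce the statement to Kronheimer--Mrowka's non-vanishing theorem for the sutured instanton Floer homology of a taut balanced sutured manifold, via the two preparatory results already established.

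First I would invoke Proposition \ref{M0-taut}, which guarantees that the sutured manifold $(M_0,\gamma_0)$ built from $L$ and the minimal-genus admissible surface $\Sigma$ is taut. The non-vanishing theorem for $\SHI$ of taut balanced sutured manifolds \cite{KM:suture} then gives $\SHI(M_0,\gamma_0)\neq 0$. Proposition \ref{suture=singular} translates this into the conclusion
$$
\II(S^1\times S^2,\, L\cup m_L\cup \mathcal{K}_2,\, u+u_L \,|\, \bar{\Sigma}) \neq 0,
$$
where $\bar{\Sigma}=\Sigma\cup D$ is the closure of $\Sigma$ in $S^1\times S^2$. Since $\bar{\Sigma}$ is obtained from $\Sigma$ by capping off with a disk, $g(\bar{\Sigma})=g(\Sigma)$, and because $\bar{\Sigma}$ is homologous to an $S^2$-slice $R$, the operators $\muu(\bar{\Sigma})$ and $\muu(R)$ coincide and the relevant eigenvalue is $2g(\Sigma)$.

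The remaining step is to remove the auxiliary meridians $m_L$ and connecting arcs $u_L$, so as to land in the generalized eigenspace of $\AHI(L;\mathbb{C})=\II(S^1\times S^2, L\cup\mathcal{K}_2, u;\mathbb{C})$ for eigenvalue $2g(\Sigma)$. I would do this by iterated excision, following the pattern of the proofs of Theorems \ref{odd-braid-detection} and \ref{suture=singular}. For each component $K_i$ of $L$, apply the torus excision (Theorem \ref{Texcision}) to the disjoint union of our triple with $(S^3,H,\omega)$ along the pair $\partial N(m_i)\sqcup \partial N(H_1)$, choosing the gluing diffeomorphism so that the decoration $(m_i,u_i)$ is swapped out for a piece whose Floer homology is trivial, namely $\II(S^3,H,\omega)\cong \mathbb{Z}$ (Lemma \ref{IH}). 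Because $\bar{\Sigma}$ is disjoint from every $\partial N(m_i)$ and stays homologous to $R$ throughout, the eigenspace condition for $\muu(R)$ with eigenvalue $2g(\Sigma)$ is preserved under each excision. After removing all the $(m_i,u_i)$ in this way, non-vanishing transfers to $\AHI(L,2g(\Sigma))$, and Proposition \ref{eigen-range} immediately yields the $-2g(\Sigma)$ case by the built-in symmetry $\AHI(L,i)\cong \AHI(L,-i)$.

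The hard part will be the final excision argument: verifying that each removal of a pair $(m_i,u_i)$ really does split off a non-vanishing tensor factor of the Floer homology inside the $2g(\Sigma)$-eigenspace. This is parallel to the doubling isomorphism $\II^\sharp(K)\cong \II^\natural(K)\otimes \mathbb{C}^2$ of \cite{KM:Kh-unknot}, but needs to be carried out in the $S^1\times S^2$ setting while simultaneously tracking the arcs $u+u_L$ and the grading induced by $\muu(\bar{\Sigma})$. One must pick the excision diffeomorphisms so that the homology class of $\bar{\Sigma}$, and hence the eigenvalue $2g(\Sigma)$, is preserved at every step, and confirm that each contributed tensor factor is positive-dimensional.
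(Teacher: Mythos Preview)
Your reduction to $\SHI(M_0,\gamma_0)\neq 0$ via Propositions \ref{M0-taut} and \ref{suture=singular}, and your observation that $\muu(\bar\Sigma)=\muu(R)$ with eigenvalue $2g(\Sigma)$, match the paper exactly. The divergence is in the last step, removing the earrings $(m_i,u_i)$.

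The paper does \emph{not} use excision here. Instead it applies the unoriented skein exact triangle (Section \ref{skein-triangle}) at a crossing between $K_i$ and its meridian $m_i$. Both smoothings of that crossing fuse $m_i$ into $K_i$ and the arc $u_i$ becomes null-homologous, so each resolution is isotopic to the triple with the $i$-th earring removed. The cyclic exact sequence
\[
\cdots\to \II(\text{with }m_i)\to \II(\text{without }m_i)\to \II(\text{without }m_i)\to\cdots
\]
commutes with $\muu(R)$, hence restricts to each generalized eigenspace; non-vanishing of the $2g(\Sigma)$-eigenspace then passes from ``with $m_i$'' to ``without $m_i$'' immediately. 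Iterating over all $i$ finishes the proof.

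Your proposed excision along $\partial N(m_i)$ and a Hopf-link torus does satisfy the hypotheses of Theorem \ref{Texcision}, but the output is not obviously ``without earring'' tensored with a known nonzero factor: with the meridian--longitude swap one performs $0$-surgery on the unknot $m_i$ and lands in $S^1\times S^2\,\#\,S^1\times S^2$ with $K_i$ now linking the new core, which does not simplify the picture. More to the point, the relation you cite as the model, $\II^\sharp(K)\cong \II^\natural(K)\otimes\mathbb{C}^2$, is itself obtained in \cite{KM:Kh-unknot} by exactly this skein-triangle-at-the-earring argument, not by excision. So the mechanism you are reaching for is the one the paper uses; you should replace the excision paragraph with the skein triangle step.
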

\begin{proof}
Because of the symmetry, it suffices to show that $\AHI(L, 2g(\Sigma)) \neq 0$.
Suppose $L=L_1\cup L_2$ 
and there is a 3-ball $B$ in $S^1\times D$ such that $L_1\subset B$ and 
$L_2\cap B=\emptyset$. Then we can use Proposition \ref{link-sum} to obtain 
$$
\AHI(L)\cong \AHI(L_1)\otimes \AHI(L_2).
$$
By Proposition \ref{eigen-range} (for the case $m=0$), $\AHI(L_1)$ is supported at f-grading $0$. Then the second part
of Proposition \ref{link-sum} shows that 
$$
\AHI(L,2g(\Sigma))\cong \AHI(L_1,0)\otimes \AHI(L_2,2g(\Sigma))=\AHI(L_1)\otimes \AHI(L_2,2g(\Sigma)).
$$
By Proposition \ref{AHI-Isharp}, we have $\AHI(L_1)\cong \II^\sharp(L_1)$.
In \cite[Section 3.1]{KM-Ras}, a local system $\Gamma$ is introduced and 
the instanton Floer homology $\II^\sharp(L_1;\Gamma)$ with local coefficients 
is defined as a $\mathbb{C}[t,t^{-1}]$-module. 
By \cite[Proposition 3.1]{KM-Ras} and the discussion below it, 
we have that $\II^\sharp(L_1;\Gamma)/\text{torsion}$ is isomorphic to 
$\II^\sharp(U_l;\Gamma)$ where $l$ is the number of components of $L_1$
and $U_l$ is the $l$ component unlink. 
Since $\II^\sharp(U_l;\Gamma)$ is a free 
$\mathcal{R}=\mathbb{C}[t,t^{-1}]$-module of rank $2^l$, we have
$$
\II^\sharp(L_1;\mathbb{C})\cong
\II^\sharp(L_1;\Gamma\otimes_{\mathcal{R}}\mathcal{R}/(t-1) )\neq 0
$$ by the universal coefficient theorem. Therefore it suffices
to replace $L$ by $L_2$ in order to prove the theorem. Hence we may assume 
there is no 3-ball in $S^1\times D$ which contains some components 
of $L$ and is disjoint from the other components.

By Proposition \ref{M0-taut} and \cite{KM:suture}*{Theorem 7.12}, we have
$$
\SHI(M_0,\gamma_0)\neq 0
$$
Using Proposition \ref{suture=singular}, we have
\begin{equation}\label{earrings-neq-0}
\II(S^1\times S^2, L\cup m_L\cup \mathcal{K}_2, u+u_L|\bar{\Sigma})\neq 0
\end{equation}
Next we want to relate $\II(S^1\times S^2, L\cup m_L\cup \mathcal{K}_2, u+u_L|\bar{\Sigma})$ to $\AHI(L)$, which is
defined as
\begin{equation*}
\II(S^1\times S^2, L \cup \mathcal{K}_2, u)
\end{equation*}
This means we need to remove the ``earrings'' from $(S^1\times S^2, L\cup m_L\cup \mathcal{K}_2, u+u_L)$. 

Apply Kronheimer-Mrowka's unoriented skein exact triangle (see Section \ref{skein-triangle})
to a crossing between $K_1$ and its earring $m_1$, we obtain a cyclic exact sequence
\begin{align*}
& \II(S^1\times S^2, L\cup m_L\cup \mathcal{K}_2, u+u_L)\to 
\II(S^1\times S^2, L\cup m_{L\setminus K_1}\cup \mathcal{K}_2, u+u_{L\setminus K_1}) \\
& \to \II(S^1\times S^2, L\cup m_{L\setminus K_1}\cup \mathcal{K}_2, u+u_{L\setminus K_1})\to  \cdots
\end{align*}  
Moreover, the map in this exact sequence commutes with the action of $\muu(\bar{\Sigma})=\muu(R)$ since those maps
are induced by cobordisms in which the two copies of $\bar{\Sigma}$ on the two ends are homologous. In particular, we have
an exact sequence on generalized eigenspaces with a fixed eigenvalue. 
From this exact sequence, \eqref{earrings-neq-0} implies 
\begin{equation*}
\II(S^1\times S^2, L\cup m_{L\setminus K_1}\cup \mathcal{K}_2, u+u_{L\setminus K_1})_{\muu(\bar{\Sigma}),2g(\bar{\Sigma})} \neq 0
\end{equation*}
We can repeat this argument to remove all the ``earrings'' to obtain
\begin{equation*}
\AHI(L,2g(\Sigma))=\II(S^1\times S^2, L\cup \mathcal{K}_2, u)_{\muu(\bar{\Sigma}),2g(\bar{\Sigma})} \neq 0
\end{equation*}
\end{proof}
If $\AHI(L)$ is supported at degree $0$, then an admissible surface with minimal genus must be of genus $0$ which is just a disk.
This means $L$ is included in a three-ball $B^3\subset S^1\times D$. By Proposition \ref{AHI-Isharp}, we have
\begin{COR}
Suppose $L$ is a link in $S^1\times D$ with all the components null-homologous.
If $\AHI(L)$ is supported at degree $0$, then $L$ is included in a three-ball $B^3\subset S^1\times D$ and 
$\AHI(L)\cong \II^\sharp(L)$.
\end{COR}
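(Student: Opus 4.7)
The plan is to combine Theorem \ref{AHI-non-vanish} with the topology of properly embedded disks in a solid torus, and then invoke Proposition \ref{AHI-Isharp}.

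First, since every component of $L$ is null-homologous in $S^1\times D$, the explicit construction given just before Proposition \ref{M0-taut} (take a $D$-slice, resolve oppositely-signed consecutive intersections by tubing) produces an admissible surface for $L$. Choose one such surface $\Sigma$ of minimal genus $g=g(\Sigma)$. Theorem \ref{AHI-non-vanish} then gives
\begin{equation*}
\AHI(L,\pm 2g) \neq 0.
\end{equation*}
Under the hypothesis that $\AHI(L)$ is supported in degree $0$, this forces $g=0$, so the minimal admissible surface $\Sigma$ is a disk.

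Next I would argue that the existence of a properly embedded disk in $S^1\times D$ that is disjoint from $L$ forces $L$ to lie in a $3$-ball. The surface $\Sigma$ is a properly embedded disk, so $\partial\Sigma$ is a simple closed curve on the torus $T^2=S^1\times \partial D$. The only simple closed curves on $T^2$ that bound embedded disks in the solid torus $S^1\times D$ are (up to isotopy) the meridians $\{\pt\}\times\partial D$, since a disk filling forces the curve to be null-homotopic in $S^1\times D$ and hence to represent $\pm 1$ times the meridian class in $H_1(T^2)$. Therefore $\Sigma$ is isotopic to a meridian disk, and cutting $S^1\times D$ along $\Sigma$ yields $D\times I\cong B^3$. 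Since $L\cap\Sigma=\emptyset$, the link $L$ survives intact in this $B^3$, giving the desired inclusion $L\subset B^3\subset S^1\times D$.

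Finally, once $L$ is contained in a $3$-ball inside $S^1\times D\subset S^1\times S^2$, Proposition \ref{AHI-Isharp} applies verbatim and yields $\AHI(L)\cong \II^\sharp(L)$, completing the argument.

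The only nontrivial input beyond what is already proved in the paper is the elementary topological fact that a properly embedded disk in $S^1\times D$ is a meridian disk; the rest of the argument is just a two-line chain of implications from Theorem \ref{AHI-non-vanish} and Proposition \ref{AHI-Isharp}. Hence there is no real obstacle, and the main conceptual content of the corollary is already hidden in the non-vanishing theorem.
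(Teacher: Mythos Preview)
Your argument is correct and follows exactly the route the paper takes: the proof in the paper is just the sentence immediately preceding the corollary, which says that degree-$0$ support forces the minimal-genus admissible surface to be a disk, hence $L$ sits in a ball, and then Proposition \ref{AHI-Isharp} applies. You have simply fleshed out the topological step that the paper leaves implicit.

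One small wrinkle worth noting: your sentence ``a disk filling forces the curve to be null-homotopic in $S^1\times D$ and hence to represent $\pm 1$ times the meridian class'' is not quite a valid deduction as stated, since a simple closed curve on $T^2$ that is null-homotopic in the solid torus could also be the trivial class (i.e.\ bound a disk on $T^2$). This is not actually a problem here, because the admissible surfaces in the paper are constructed by tubing a $D$-slice and are used throughout (e.g.\ in forming $\bar\Sigma=\Sigma\cup D$ homologous to the $S^2$-slice $R$) in a way that forces $\partial\Sigma$ to be a meridian; the definition is just stated a bit loosely. With that understood, your disk is a meridian disk and the rest of your argument goes through verbatim.
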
 

\section{The Spectral Sequence}\label{SS}
Let $L$ be a link in $A\times I$ with a projection to $A$ (more precisely, $A\times\{0\}$).
This projection gives a diagram $D$ with $c$ crossings. We also assume that the crossings are ordered.
Given any element $v$ in the cube $\{0,1\}^c\subset \mathbb{R}^c$, we can resolve
the crossings by $0$-smoothing or $1$-smoothing determined by $v$. We denote the resulting link by $L_v$. 
As a link in $A$, $L_v$ is just a collection of
trivial (null-homologous) circles and non-trivial circles.

We define a partial order on
$\{0,1\}^c$ by setting
$v\ge u$ if $v_i\ge u_i$ for all $i\in \{1,\cdots,c\}$ where $v_i,u_i$ denote the $i$-th components.
For each $v\ge u$, there is a standard cobordism $S_{vu}\subset I\times A\times \{0\}$ from $L_v$ to $L_u$. All the links $L_v$ and cobordisms $S_{vu}$ can be
oriented consistently so that $\partial S_{vu}=L_u-L_v$ by the same method used 
in \cite{KM:Kh-unknot}*{Section 8.1}: take a checkerboard coloring of the regions
of the diagram $D$ in $A$ and orient $L_v$ by the boundary orientation of the black region away from the smoothings. Now for each $S_{vu}$ the map
\begin{equation*}
  \AHI(S_{vu}):\AHI(L_v)\to \AHI(L_u)
\end{equation*}
is well-defined \emph{without} an overall sign ambiguity.

By Corollaries 6.9 and 6.10 in \cite{KM:Kh-unknot}, we have
\begin{PR}\label{KM-ss}
Given links $L_v$ as above, there is a spectral sequence whose $E_1$-page is
\begin{equation*}
  \bigoplus_{v\in \{0,1\}^c} \AHI(L_v)
\end{equation*}
and which converges to $\AHI(L)$. Moreover, the differential on the $E_1$ page is
\begin{equation}\label{d1}
  d_1=\sum_i \sum_{v-u=e_i} (-1)^{\eta(v,u)}\AHI(S_{vu})
\end{equation}
where $e_i$ is the standard $i$-th basis vector in $\mathbb{R}^c$ and  $\eta(v,u)=\sum_{j=i}^c v_j$.
\end{PR}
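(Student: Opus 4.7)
The plan is to reduce this directly to Corollaries 6.9 and 6.10 of Kronheimer--Mrowka \cite{KM:Kh-unknot}, which establish exactly this kind of cube-of-resolutions spectral sequence in the singular instanton setting for links in $S^3$. The task is to check that their machinery applies verbatim once we re-express $\AHI$ through its defining admissible triple.

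First I would translate. By Definition \ref{AHI-def}, $\AHI(L) = \II(S^1\times S^2, L\sqcup \mathcal{K}_2, u)$, and the diagram $D$ for $L$ in the annulus lifts to a diagram for $L\sqcup\mathcal{K}_2$ in $S^1\times S^2$ whose $c$ crossings are precisely the original crossings (the pair $\mathcal{K}_2$ and the arc $u$ sit in the complementary $S^1\times D$ and can be arranged to be disjoint from every smoothing region). Resolving the $c$ crossings yields the family $L_v\sqcup\mathcal{K}_2$, and so each vertex of the cube carries $\AHI(L_v)$. Moreover, the standard cobordisms $S_{vu}\subset I\times A\times\{0\}$ extend to cobordisms in $I\times S^1\times S^2$ that are product on the complement, so they induce the relevant edge maps.

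Next I would verify the three inputs that Kronheimer--Mrowka's construction requires. The unoriented skein exact triangle is already available in this setting (the discussion preceding Section \ref{skein-triangle}), and the maps in the triangle are induced by the standard saddle cobordisms between the smoothings at a single crossing. The second input is composition-compatibility of cobordism maps, which is standard. The third input is a coherent sign convention: the checkerboard-coloring trick of \cite{KM:Kh-unknot}*{Section 8.1} orients all $L_v$ and all $S_{vu}$ simultaneously so that $\partial S_{vu}=L_u-L_v$, which forces each $\AHI(S_{vu})$ to be well-defined on the nose (no overall sign). The signs $(-1)^{\eta(v,u)}$ are the usual anticommutation signs needed to make every 2-face of the cube anticommute.

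With these inputs in place, Corollaries 6.9 and 6.10 of \cite{KM:Kh-unknot} produce a filtered chain complex on $\bigoplus_v \AHI(L_v)$ whose total differential is a sum $d_1+d_2+\cdots$ of the edge maps and higher cube maps, such that the associated spectral sequence has the stated $E_1$-page and $d_1$ given by \eqref{d1}, and converges to the Floer homology of the original admissible triple, which is $\AHI(L)$. The only real obstacle to worry about is that Kronheimer--Mrowka's cube construction was written for links in $S^3$, so one must check that the presence of the auxiliary $\mathcal{K}_2$ and $u$ does not interact with the skein relations at each crossing. This is immediate: the edge cobordisms $S_{vu}$ and all higher cube cobordisms are supported in $I\times A\times\{0\}$, entirely disjoint from $I\times \mathcal{K}_2$ and $I\times u$, so the cube of cobordisms simply gets crossed with the product pair $(I\times(S^1\times S^2\setminus A\times\{0\}),\, I\times\mathcal{K}_2,\, I\times u)$, and every step of the KM argument goes through unchanged.
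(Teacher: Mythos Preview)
Your proposal is correct and matches the paper's approach exactly: the paper simply states that Proposition~\ref{KM-ss} follows ``by Corollaries 6.9 and 6.10 in \cite{KM:Kh-unknot}'' with no further argument. Your added verification that the auxiliary data $\mathcal{K}_2$ and $u$ do not interfere is reasonable, though slightly overcautious: the cube construction in \cite{KM:Kh-unknot}*{Section 6} is already formulated for arbitrary admissible triples $(Y,K,\omega)$ with crossings in disjoint balls, not just for links in $S^3$, so the application to $(S^1\times S^2, L\sqcup\mathcal{K}_2, u)$ is immediate.
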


\subsection{The operator action on the spectral sequence}
We also want to study the $\muu(R)$ action on the spectral sequence, so in this subsection we include more details 
of Kronheimer-Mrowka's 
construction of the spectral
sequence in Proposition \ref{KM-ss}. We assume perturbations are chosen so that all the moduli spaces are regular through this subsection.
We define
\begin{eqnarray*}
  |v|_1 &=& \sum_i |v_i| \\
  |v|_\infty &=& \sup_i |v_i|
\end{eqnarray*}
for any $v\in \mathbb{R}^c$.
Given any $v\in \{0,1\}^c$, we use $C_v$ to denote the Floer chain complex (under certain perturbation) for the triple $(S^1\times S^2, L_v\sqcup \mathcal{K}_2, u)$ and
use $d_v$ to denote the differential on $C_v$. Given $v\ge u$ in $\{0,1\}^c$ , the cobordism $S_{vu}$ can be made into a surface
$S_{vu}^+$ with cylindrical ends included
in $\mathbb{R}\times S^1\times S^2$ in the standard way. The surface
$S_{vu}^+$ is a product surface away from $|v-u|_1$ four-balls where the skein moves happen.
By shifting these four-balls containing the skein moves, 
Kronheimer and Mrowka define a family of metrics parametrized by $G_{vu}\cong \mathbb{R}^{|v-u|_1}$.
There is an $\mathbb{R}$-action on $G_{vu}$ defined by the $\mathbb{R}$-translation on $\mathbb{R}\times S^1\times S^2$. The quotient $G_{vu}\slash \mathbb{R}$ is denoted by
$\breve{G}_{vu}$. The spaces
$G_{vu}$ and $\breve{G}_{vu}$ are not compact in general but can be compactified into spaces
$G_{vu}^+$ and $\breve{G}_{vu}^+$ by adding broken metrics.
Let
\begin{equation*}
  M_{vu}(\alpha,\beta)_d:=M(\mathbb{R}\times S^1\times S^2, S_{vu}^+\sqcup \mathbb{R}\times \mathcal{K}_2,\mathbb{R}\times u, G_{vu};\alpha,\beta)_d
\end{equation*}
be the $d$-dimensional moduli space of ASD trajectories on $(\mathbb{R}\times S^1\times S^2, S_{vu}^+\sqcup \mathbb{R}\times \mathcal{K}_2,\mathbb{R}\times u)$
equipped with metrics in $G_{vu}$ with limiting connection $\alpha$ on the incoming end and $\beta$ on the outgoing end. 
Here $\alpha$ and $\beta$ are generators for
$C_v$ and $C_u$ respectively.
There is an obvious map
$M_{vu}\to G_{vu}$ and the $\mathbb{R}$-action on $G_{vu}$ can be lifted on $M_{vu}(\alpha, \beta)_d$. We denote the quotient by
\begin{equation*}
  \breve{M}_{vu}(\alpha,\beta)_{d-1}:=M_{vu}(\alpha, \beta)_d/\mathbb{R}
\end{equation*}
Both $M_{vu}(\alpha,\beta)_d$ and $\breve{M}_{vu}(\alpha,\beta)_{d-1}$ can be partially compactified by adding broken trajectories lying over broken metrics in
$\partial G_{vu}^+$ and $\partial\breve{G}_{vu}^+$. We denote these partial compactications by $M^+_{vu}(\alpha,\beta)$ and $\breve{M}^+_{vu}(\alpha,\beta)$ respectively.
These are only \emph{partial} compactifications because of the possible appearance of bubbles. If the dimension of these spaces are smaller than $4$ then they are compact
since no bubble could appear.

A group homomorphism
\begin{equation*}
  \breve{m}_{vu}: C_v\to C_u
\end{equation*}
can be defined by counting (signed) points in the 0-dimensional moduli space:
\begin{equation*}
  \breve{m}_{vu}(\alpha):=\sum_\beta \# \breve{M}_{vu}(\alpha,\beta)_0 \cdot \beta
\end{equation*}
where $\beta$ runs through all the generators for $C_u$. In the case $v=u$, 
$\breve{m}_{vv}$ is just the Floer differential.
Notice that the definition of $\breve{m}_{vu}$ depends on a choice of orientation of the moduli spaces and
different choices will define maps differing by an overall sign \cite{KM:Kh-unknot}*{Section 4.4}.

The boundary points of $\breve{M}_{wu}(\alpha,\beta)_1$ are broken trajectories and the signed count of the boundary points is $0$ as the boundary of an oriented 1-manifold.
A proper choice of orientations for all the moduli spaces is made in \cite{KM:Kh-unknot}. Under this choice of orientation,
$\#\partial \breve{M}_{wu}(\alpha,\beta)_1=0$ implies
\begin{PR}[{{\cite{KM:Kh-unknot}*{Lemma 6.5}}}]
Given $w\ge u$ in $\{0,1\}^c$, we have
\begin{equation}\label{mvu=}
  \sum_{w\ge v \ge u} (-1)^{|v-u|_1(|w-v|_1-1)+1}\breve{m}_{vu}\circ \breve{m}_{wv}=0
\end{equation}
\end{PR}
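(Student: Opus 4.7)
The plan is to derive \eqref{mvu=} as the vanishing of the signed count of boundary points of the compactified 1-dimensional moduli space $\breve{M}_{wu}^+(\alpha,\beta)_1$. Since the dimension after quotienting by $\mathbb{R}$ is one, and no bubble can appear (as $d\le 2$ for the unreduced moduli space and our regularity assumption holds), $\breve{M}_{wu}^+(\alpha,\beta)_1$ is a compact oriented $1$-manifold with boundary. Thus the total signed count of $\partial \breve{M}_{wu}^+(\alpha,\beta)_1$ is zero, and the identity will follow once we identify these boundary components and check signs.

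First I would describe the boundary strata. The compactification $\breve{G}_{wu}^+$ has codimension-one faces that, by construction of the family of metrics via neck-stretching at the $|w-u|_1$ four-balls where the skein moves happen, are naturally indexed by strict factorizations $w>v>u$ and identified with products $\breve{G}_{wv}^+\times G_{vu}^+$ (or equivalently $G_{wv}^+\times\breve{G}_{vu}^+$, since we have quotiented a single $\mathbb{R}$). Over such a face the Gromov-type compactification of $M_{wu}$ contributes, in the 1-dimensional stratum, fiber products
\begin{equation*}
\breve{M}_{wv}(\alpha,\gamma)_0\;\times\;\breve{M}_{vu}(\gamma,\beta)_0
\end{equation*}
summed over intermediate generators $\gamma$, which contribute $(\breve{m}_{vu}\circ\breve{m}_{wv})(\alpha)$ up to sign. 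In addition, broken trajectories at the incoming/outgoing ends, where a Floer trajectory peels off, correspond exactly to the degenerate cases $v=w$ (giving $\breve{m}_{wu}\circ d_w$) and $v=u$ (giving $d_u\circ \breve{m}_{wu}$); so indexing by $w\ge v\ge u$ captures all boundary components uniformly.

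Next I would compute the sign attached to each boundary face. The orientations on $G_{vu}$ and $\breve{G}_{wu}^+$ are fixed as in Kronheimer--Mrowka's construction, so the sign with which the face corresponding to $v$ appears in $\partial\breve{G}_{wu}^+$ is determined by comparing a product orientation on $\breve{G}_{wv}^+\times G_{vu}^+$ with the boundary orientation of $\breve{G}_{wu}^+$. A direct dimension count (with $\dim G_{vu}=|v-u|_1$ and $\dim\breve{G}_{wv}^+=|w-v|_1-1$) gives a factor of $(-1)^{|v-u|_1(|w-v|_1-1)}$, and an additional global $-1$ comes from the standard conversion of ``boundary of $1$-manifold equals zero'' into an algebraic identity (the same overall $-1$ that turns $\partial^2=0$ into $d\circ d=0$ in the unperturbed case $v=w=u$). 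Together these produce exactly the sign $(-1)^{|v-u|_1(|w-v|_1-1)+1}$ appearing in \eqref{mvu=}. Summing over all codimension-one faces and setting the total to zero yields the stated identity.

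The main obstacle is the sign bookkeeping in the previous paragraph: the orientations on $M_{wu}$, on the parameter space $G_{wu}$, and on the coherent system of boundary faces of $\breve{G}_{wu}^+$ must all be matched up, and one must verify that the choices used to define each $\breve{m}_{vu}$ are globally compatible so that the contributions of different factorizations combine to the claimed alternating sum rather than some other sign pattern. Apart from signs, the argument is formally identical to the verification that an $A_\infty$-type ``cube relation'' for a family of cobordism maps follows from gluing and transversality, and we can invoke the gluing theorems for singular instanton moduli spaces from Kronheimer--Mrowka in the form they use elsewhere in \cite{KM:Kh-unknot}, so no new analytic input is needed.
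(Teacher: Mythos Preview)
Your proposal is correct and follows exactly the approach indicated in the paper: the identity is obtained by analyzing the boundary of the compact oriented 1-manifold $\breve{M}_{wu}^+(\alpha,\beta)_1$, whose codimension-one strata are the broken trajectories indexed by $w\ge v\ge u$, and setting the signed count of boundary points to zero. The paper does not carry out the sign computation itself but defers entirely to \cite{KM:Kh-unknot} for the coherent choice of orientations; your honest flagging of the sign bookkeeping as the delicate point is therefore precisely in line with how the paper treats it.
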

Now define
\begin{equation*}
  f_{vu}:C_v\to C_u
\end{equation*}
by
\begin{equation}\label{fvu}
  f_{vu}:=(-1)^{\frac{1}{2}|v-u|_1(|v-u|_1-1)+\sum v_i  }\breve{m}_{vu}
\end{equation}
Using \eqref{mvu=}, it is easy to check that
\begin{equation}\label{fvu=}
  \sum_v f_{vu} f_{wv}=0
\end{equation}
where $w\ge v\ge u$ in $\{0,1\}^c$. Therefore
\begin{equation*}
  (\mathbf{C},\mathbf{F}):=(\bigoplus_{v\in \{0,1\}^c}C_v, \sum_{v\ge u} f_{vu})
\end{equation*}
is a chain complex.
\begin{THE}[{{\cite{KM:Kh-unknot}*{Theorem 6.8}}}]\label{cube}
There is a quasi-isomorphism
\begin{equation*}
  (C(L),d_L)\to (\mathbf{C},\mathbf{F})
\end{equation*}
where $(C(L),d_L)$ is the Floer chain complex used to define $\AHI(L)$.
\end{THE}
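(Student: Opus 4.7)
The plan is to follow the cube-of-resolutions argument of Kronheimer--Mrowka (the proof of Theorem 6.8 in \cite{KM:Kh-unknot}) and simply verify that each step goes through for the ambient manifold $S^1\times S^2$ with the extra auxiliary data $(\mathcal{K}_2,u)$. Since the singular locus used to define $\AHI(L)$ differs from the $S^3$ setting only in the additional product circles $\mathcal{K}_2$ and the $w$-arc $u$, and these are disjoint from the region around every crossing where the cube degenerations take place, all the moduli-space and gluing analysis is local near the crossing balls and is identical to the $S^3$ case.

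Concretely, I would first promote the collection of maps $\{f_{vu}\}$ to an $A_\infty$-style twisted complex by extending the family-of-metrics construction: in addition to the spaces $G_{vu}$ already introduced, one introduces further families that parameterize broken trajectories and use them to verify \eqref{fvu=}. The signs in \eqref{fvu} are designed precisely so that \eqref{mvu=} turns into the usual chain-complex identity, so $(\mathbf{C},\mathbf{F})$ is indeed a complex. Then one constructs the quasi-isomorphism $(C(L),d_L)\to (\mathbf{C},\mathbf{F})$ by counting solutions on $\mathbb{R}\times S^1\times S^2$ over a single large family of metrics $G^{(1^c)}$ that degenerates to the neck-stretching along the spheres bounding each of the $c$ crossing balls, with projections to each face $G_{vu}$ of the cube. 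This gives a component $\phi_v:C(L)\to C_v$ for each vertex $v$; assembling them produces a map $\Phi=\sum_v \phi_v$ whose failure to commute with differentials is controlled by the boundary of the corresponding one-dimensional moduli spaces.

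To show $\Phi$ is a quasi-isomorphism one filters $(\mathbf{C},\mathbf{F})$ by the cube degree $|v|_1$ and identifies the resulting spectral sequence with the one produced by the neck-stretching family $G^{(1^c)}$ on the $C(L)$ side: as the necks around all $c$ crossing balls are simultaneously stretched, the Floer complex of $L$ acquires a filtration whose associated graded is precisely $\bigoplus_v C_v$ with differential $\sum d_v$, while the next page recovers $\sum f_{vu}$ with $|v-u|_1=1$. A standard comparison theorem for filtered complexes then gives the quasi-isomorphism.

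The genuinely new input beyond quoting \cite{KM:Kh-unknot} is the verification that the moduli spaces remain compact (modulo broken trajectories) in the presence of the singular locus $\mathcal{K}_2$ and the $w$-arc $u$, and that the orientation conventions chosen in \cite{KM:Kh-unknot} still produce the signs in \eqref{fvu}. The main obstacle I expect is bookkeeping: one must check that no new bubbling phenomenon occurs along $\mathcal{K}_2$ during the neck stretches (which follows because the neck regions are disjoint from $\mathcal{K}_2$), and that the orientation data for the families $G_{vu}$ can be coherently chosen in the presence of the extra singular circles. Once these technical points are in place the argument is a direct translation of \cite[Theorem 6.8]{KM:Kh-unknot}, and Proposition \ref{KM-ss} follows by taking the spectral sequence of the cube filtration on $(\mathbf{C},\mathbf{F})$.
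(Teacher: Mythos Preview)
Your overall instinct---that this reduces to the argument in \cite{KM:Kh-unknot} because the extra data $(\mathcal{K}_2,u)$ sit away from the crossing balls---is correct, and that is exactly what the paper does. However, the specific mechanism you outline for proving the quasi-isomorphism is not the one in \cite{KM:Kh-unknot}, and the step where it diverges contains a genuine gap.

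The paper (following \cite{KM:Kh-unknot}) does \emph{not} build one large map $\Phi:C(L)\to\mathbf{C}$ and then compare filtrations. Instead it enlarges the cube to $\{0,1,2\}^c$, where ``$2$-smoothing'' means leaving the crossing intact, so that $L_{2,2,\ldots,2}=L$. One then defines intermediate cubes $\mathbf{C}_{2,\ldots,2}$ indexed by $\{0,1\}^{c-k}$ (the first $k$ coordinates fixed at $2$) and an anti-chain map $\mathbf{H}:\mathbf{C}_2\to\mathbf{C}$ built from the maps $f_{2v',u}$. The fact that $\mathbf{H}$ is a quasi-isomorphism is precisely the statement of the unoriented skein exact triangle for the first crossing, applied to every $v'\in\{0,1\}^{c-1}$. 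Iterating this one crossing at a time yields a chain
\[
C(L)=\mathbf{C}_{2,\ldots,2}\to\cdots\to\mathbf{C}_2\to\mathbf{C}
\]
of quasi-isomorphisms whose composition is the desired map. The key analytic input at each step is the exact triangle, not a global filtration comparison.

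The gap in your proposal is the sentence ``as the necks around all $c$ crossing balls are simultaneously stretched, the Floer complex of $L$ acquires a filtration whose associated graded is precisely $\bigoplus_v C_v$.'' Neck-stretching produces a \emph{family} of chain complexes (or a family of metrics over which one counts parametrized moduli spaces), not a filtration on the single complex $C(L)$. There is no natural increasing filtration on $C(L)$ whose associated graded is $\bigoplus_v C_v$: the generators of $C(L)$ are critical points of the Chern--Simons functional for $L$ itself, and these bear no a priori relation to the critical points for the various $L_v$. The way one actually gets a comparison is exactly through the iterated-cone / exact-triangle argument above, which sidesteps the need for any filtration on $C(L)$. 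Your spectral-sequence comparison would work if you had a filtered map between two filtered complexes inducing an isomorphism on $E_1$, but here only the target $\mathbf{C}$ is filtered.
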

Proposition \ref{KM-ss} follows from the above theorem by filtering the cube $\mathbf{C}$ by the sum of coordinates (and choosing the signs properly).

Besides the differential $\mathbf{F}$, we want to define another map on $\mathbf{C}$.
We move to \emph{complex coefficients} in the following discussion. Let $R$ be a sphere slice in $S^1\times S^2$ disjoint with all the three-balls
where the skein moves of $L$ happen. Let $\nu(R)\subset S^1\times S^2$ be a neighborhood of $R$ that contains $H$
and is  disjoint with all the skein moves. 
The cohomology class used to define $\muu(R)$ can be
represented as a linear combination $\sum_l a_l V_l$ where $V_l$ are divisors in $B^\ast((-1,1)\times \nu(R))$ (the space of irreducible 
connections modulo the gauge group) and $a_l\in \mathbb{Q}$. 
By abuse of notation
we denote the linear combination by $V_R$ and any intersection with $V_R$ appearing later should be understood as the linear combination of intersections with $V_l$.
Given $v\ge u$ as before, we define a map
\begin{equation*}
  r_{vu}:C_v\to C_u
\end{equation*}
by
\begin{equation}\label{rvu}
 r_{vu}(\alpha):= \sum_\beta \#  ({M}_{vu}(\alpha,\beta)_2 \cap V_R) \cdot \beta= \sum_\beta \sum_l a_l\#  ({M}_{vu}(\alpha,\beta)_2 \cap V_l) \cdot \beta
\end{equation}
where $({M}_{vu}(\alpha,\beta)_2 \cap V_l)$ should be understood as pulling back the divisor $V_l$ by $r: M_{vu}\to B^\ast((-1,1)\times \nu(R))$. We assume the divisors
$V_l$ are generic so that all the intersections are regular.
In particular, $r_{vv}:C_v\to C_v$ induces the operator $\muu(R):\AHI(L_v)\to \AHI(L_v)$ 
in homology.

The boundary of the 1-dimensional space $({M}_{wu}^+(\alpha,\beta)_3 \cap V_R)$ consists of broken trajectories of the following two types:
\begin{itemize}
  \item An element in   $ ({M}_{wv}(\alpha,\gamma)_2 \cap V_R)\times  \breve{M}_{vu}(\gamma,\beta)_0$.
  \item An element in   $\breve{M}_{wv}(\alpha, \gamma)_0 \times  (M_{vu}(\gamma, \beta)_2\cap V_R)$.
\end{itemize}
Similar to Proposition \ref{mvu=}, $\# \partial({M}_{wu}^+(\alpha,\beta)_3 \cap V_R) =0$ implies the following.
\begin{PR}
Given $w\ge u$ in $\{0,1\}^c$, we have
\begin{equation}\label{rvu=}
  \sum_{w\ge v \ge u} (-1)^{|v-u|_1(|w-v|_1-1)+1}(r_{vu}\circ \breve{m}_{wv}-\breve{m}_{vu}\circ r_{vu})   =0
\end{equation}
\end{PR}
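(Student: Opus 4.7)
The plan is to mimic the proof of equation \eqref{mvu=} (Lemma 6.5 of \cite{KM:Kh-unknot}): analyze the boundary of a suitable one-dimensional cut-down moduli space and use that the signed count of boundary points of an oriented compact $1$-manifold vanishes. The one-manifold in question is the one already singled out in the paragraph preceding the statement, namely $M_{wu}^+(\alpha,\beta)_3 \cap V_R$, for every pair of generators $\alpha\in C_w$, $\beta\in C_u$.

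First I would fix a generic geometric representative of the divisor $V_R$ supported in $(-1,1)\times \nu(R)$. Since $\nu(R)$ sits away from every four-ball where a skein move happens, the restriction map $r:M_{wu}\to B^\ast((-1,1)\times\nu(R))$ is independent of the metric parameter in $G_{wu}$, so a generic choice of $V_R$ can be arranged to be transverse to $r$ on every stratum of every relevant moduli space. This ensures each $M_{vu}(\alpha,\beta)_k\cap V_R$ is a smooth manifold of dimension $k-2$, and the partial compactification $(M_{wu}^+(\alpha,\beta)_3\cap V_R)$ is a smooth $1$-manifold with boundary.

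Next I would enumerate the ends of this $1$-manifold. By the same factorisation picture of broken trajectories used for \eqref{mvu=}, together with dimension counting, only two types of ends occur. Either the top piece carries the intersection with $V_R$, contributing an element of $(M_{wv}(\alpha,\gamma)_2\cap V_R)\times \breve{M}_{vu}(\gamma,\beta)_0$ and hence to $\breve{m}_{vu}\circ r_{wv}$, or the bottom piece does, contributing an element of $\breve{M}_{wv}(\alpha,\gamma)_0\times (M_{vu}(\gamma,\beta)_2\cap V_R)$ and hence to $r_{vu}\circ \breve{m}_{wv}$. Instanton/monopole bubbling and loss of compactness through $V_R$ are excluded by the standard codimension estimates: in total dimension $\le 3$ no bubble can form, and the divisor $V_R$ may be perturbed off of any stratum on which a problem could arise.

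Finally, the sign bookkeeping. Orient $M_{vu}(\alpha,\beta)_k\cap V_R$ via the fibre-first convention from the restriction map to $B^\ast((-1,1)\times\nu(R))$, starting from the orientations of the uncut moduli spaces fixed in \cite{KM:Kh-unknot}. The gluing-sign calculation that produces the prefactor $(-1)^{|v-u|_1(|w-v|_1-1)+1}$ in \eqref{mvu=} carries over verbatim; the crucial point is that pulling back by $V_R$ contributes the \emph{same} sign on both sides of a break, because $V_R$ lives in a product region disjoint from the skein balls and so is compatible with the gluing at every intermediate level. Equating the signed count of boundary points with zero yields \eqref{rvu=}. I expect the sign verification — in particular checking that intersecting with $V_R$ does not introduce a relative sign between the two types of ends — to be the main obstacle, since the remainder of the argument is a direct transposition of the proof of \eqref{mvu=}.
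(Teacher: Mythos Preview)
Your proposal is correct and follows exactly the approach the paper takes: the paper's proof consists solely of the paragraph immediately preceding the proposition, which identifies the two types of boundary strata of $M_{wu}^+(\alpha,\beta)_3\cap V_R$ and then asserts that $\#\partial=0$ yields the identity, ``similar to Proposition~\ref{mvu=}.'' You have simply fleshed out the transversality, compactness, and sign details that the paper leaves implicit.
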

Now we define
\begin{equation*}
  \mathbf{R}:\mathbf{C}\to \mathbf{C}
\end{equation*}
by
\begin{equation*}
  \mathbf{R}:=\sum_{v\ge u}(-1)^{\frac{1}{2}|v-u|_1(|v-u|_1-1)+\sum v_i  }r_{vu}
\end{equation*}
\eqref{rvu=} implies
\begin{PR}
$\mathbf{R}$ is a chain map on $(\mathbf{C},\mathbf{F})$: i.e. $\mathbf{RF}-\mathbf{FR}=0$.
\end{PR}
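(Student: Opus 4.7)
The plan is to reduce the claim $\mathbf{RF}-\mathbf{FR}=0$ to the geometric identity \eqref{rvu=} (obtained from counting the boundary of $M_{wu}^+(\alpha,\beta)_3\cap V_R$), by a careful bookkeeping of the sign prefactors that enter the definitions of $\mathbf{F}$ and $\mathbf{R}$. Concretely, I will check that for each pair $w\ge u$ in $\{0,1\}^c$, the $(w,u)$-component of the commutator vanishes as a consequence of \eqref{rvu=}.

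First, I would expand the $(w,u)$-components of $\mathbf{RF}$ and $\mathbf{FR}$ as sums over intermediate vertices $v$ with $w\ge v\ge u$. Writing the sign prefactor from \eqref{fvu} as $\sigma(v,u):=(-1)^{\frac12|v-u|_1(|v-u|_1-1)+\sum v_i}$ (and using the identical shape for the prefactor in the definition of $\mathbf{R}$), I get
\begin{equation*}
(\mathbf{RF})_{wu}=\sum_{w\ge v\ge u}\sigma(v,u)\,\sigma(w,v)\,r_{vu}\circ\breve{m}_{wv},\qquad
(\mathbf{FR})_{wu}=\sum_{w\ge v\ge u}\sigma(v,u)\,\sigma(w,v)\,\breve{m}_{vu}\circ r_{wv}.
\end{equation*}
Note that the product of signs $\sigma(v,u)\sigma(w,v)$ is the \emph{same} in the two sums, so the $(w,u)$-component of $\mathbf{RF}-\mathbf{FR}$ is a single sum over $v$ of this common sign times $r_{vu}\circ\breve{m}_{wv}-\breve{m}_{vu}\circ r_{wv}$.

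Second, I would compare this common sign with the prefactor $(-1)^{|v-u|_1(|w-v|_1-1)+1}$ appearing in \eqref{rvu=}. Since $w\ge v\ge u$ lie in $\{0,1\}^c$, one has $|w-u|_1=|w-v|_1+|v-u|_1$, and the identity $\binom{a+b}{2}=\binom{a}{2}+\binom{b}{2}+ab$ gives
\begin{equation*}
\tfrac12|v-u|_1(|v-u|_1-1)+\tfrac12|w-v|_1(|w-v|_1-1)\equiv \tfrac12|w-u|_1(|w-u|_1-1)+|v-u|_1|w-v|_1\pmod 2.
\end{equation*}
Meanwhile $\sum v_i+\sum w_i\equiv\sum u_i+\sum w_i+|v-u|_1\pmod 2$, so the $\sum v_i$ contribution in $\sigma(v,u)\sigma(w,v)$ can also be absorbed into a factor depending only on $(w,u)$ plus a term linear in $|v-u|_1$. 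Collecting everything, I would show that $\sigma(v,u)\sigma(w,v)$ equals a sign $\varepsilon(w,u)$ independent of $v$, times $(-1)^{|v-u|_1(|w-v|_1-1)+1}$ (with perhaps an extra $v$-independent sign that may be absorbed into $\varepsilon(w,u)$).

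Third, factoring out $\varepsilon(w,u)$ and invoking \eqref{rvu=} termwise, I conclude $(\mathbf{RF})_{wu}-(\mathbf{FR})_{wu}=0$ for every pair $w\ge u$, whence $\mathbf{RF}=\mathbf{FR}$. The main obstacle is the sign bookkeeping in the second step: one has to match the quadratic expression $\tfrac12|v-u|_1(|v-u|_1-1)+\tfrac12|w-v|_1(|w-v|_1-1)$ against $|v-u|_1(|w-v|_1-1)$ mod $2$, and check that the linear-in-$v$ contributions from $\sum v_i$ in the two prefactors cancel cleanly. Once these parities are pinned down, the algebraic identity \eqref{rvu=} supplied by the boundary analysis of the three-dimensional cut-down moduli spaces $M_{wu}^+(\alpha,\beta)_3\cap V_R$ completes the proof.
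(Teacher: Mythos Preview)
Your proposal is correct and is exactly the approach the paper has in mind: the paper simply writes ``\eqref{rvu=} implies'' the proposition, leaving to the reader the parity check that the sign prefactor $\sigma(v,u)\sigma(w,v)$ differs from $(-1)^{|v-u|_1(|w-v|_1-1)+1}$ by a factor depending only on $(w,u)$---the very same computation that reduces \eqref{fvu=} to \eqref{mvu=}. Your hedging in the second step is unnecessary: with $a=|v-u|_1$, $b=|w-v|_1$, the binomial identity and $\sum v_i=\sum u_i+a$ give $\sigma(v,u)\sigma(w,v)=(-1)^{\binom{a+b}{2}+\sum u_i+\sum w_i+1}\cdot(-1)^{a(b-1)+1}$, so the match is clean.
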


A similar $\mathbf{R}$-operator can be defined on $(C_L, d_L)$. Since there is  a unique metric in this case, the operator is nothing but $\muu(R):C_L\to C_L$ which
induces $\muu(R): \AHI(L)\to \AHI(K)$ in homology. Next we want to show that the quasi-isomorphism in Theorem \ref{cube} intertwines the $\mathbf{R}$-operators.
We need to review the proof of Theorem \ref{cube}. First of all, we want to extend the notation slightly.
Given $v\in \{0,1,2\}^c$, we can define a link $L_v$ by resolving the crossings by $0$-smoothing, $1$-smoothing
or ``2-smoothing'' determined by $v$. Here ``2-smoothing'' means to keep the crossing without any change. In particular, $L_{2,2,\cdots,2}$ is just $L$.
Let $C_v$ be the Floer chain complex used to define $\AHI(L_v)$.

Suppose $v\ge u \in  \{0,1,2\}^c$,
a family of metrics $G_{vu}\cong \mathbb{R}^{|v-u|_1}$ in defined in \cite{KM:Kh-unknot} when $|v-u|_\infty\le 1$ or
there are $v'\ge u' \in \{0,1\}^{c-1}$ such that $v=2v', u=0u'$. Using those families of metrics, we can again define $f_{vu}:C_v\to C_u$ as in \eqref{fvu} and the
same argument shows that $\eqref{fvu=}$ holds. Similarly $r_{vu}$ can be defined as in \eqref{rvu} and \eqref{rvu=} holds. Now \eqref{fvu=} and \eqref{rvu=}
imply
\begin{itemize}
  \item  The following $(c-1)$-``cube''
  \begin{equation*}
    (\mathbf{C}_2,\mathbf{F}_2):=(\bigoplus_{v'\in \{0,1\}^{c-1}}C_{2v'}, \sum_{v'\ge u'\in \{0,1\}^{c-1}}f_{2v', 2u'})
  \end{equation*}
   is a chain complex.
  \item The map
   \begin{equation*}
     \mathbf{H}: \mathbf{C}_2\to \mathbf{C}
   \end{equation*}
   defined by
   \begin{equation*}
    \mathbf{ H}:=\sum_{\substack{v'\in \{0,1\}^{c-1}\\ u\in \{0,1\}^{c}\\ 2v'\ge u }} f_{2v',u}
   \end{equation*}
   is an anti-chain map, i.e $\mathbf{F}\mathbf{H}+\mathbf{H}\mathbf{F}_2=0$.
  \item Let $$\mathbf{R}_2:\mathbf{C}_2\to \mathbf{C}_2$$ be the map defined by
  \begin{equation*}
    \mathbf{R}_2:=\sum_{v'\ge u' \in \{0,1\}^{c-1} }(-1)^{\frac{1}{2}|v'-u'|_1(|v'-u'|_1-1)+\sum v'_i  }r_{2v',2u'}
  \end{equation*}
  and
   \begin{equation*}
     \mathbf{R}': \mathbf{C}_2\to \mathbf{C}
   \end{equation*}
  be the map defined by
  \begin{equation*}
    \mathbf{ R}':=\sum_{\substack{v'\in \{0,1\}^{c-1}\\ u\in \{0,1\}^{c}\\ 2v'\ge u }}(-1)^{\frac{1}{2}|v'-u'|_1(|v'-u'|_1-1)+\sum v'_i  }  
     r_{2v',u}
   \end{equation*}
Then
\begin{equation}\label{HR}
    \mathbf{F}_2\mathbf{R}_2=\mathbf{R}_2\mathbf{F}_2~~~\text{and}~~~ 
    \mathbf{H}\mathbf{R}_2-\mathbf{R}\mathbf{H}+ \mathbf{F}\mathbf{R}'-\mathbf{R}'\mathbf{F}_2=0
  \end{equation}
 where the second equality says the map $\mathbf{R}'$ is an anti-chain homotopy between
 $\mathbf{H}\mathbf{R}_2$ and $\mathbf{R}\mathbf{H}$ 
 (see also Figure \ref{fig_chain_homotopy}). 
\end{itemize}
\begin{figure}
\[
\xymatrix{
  \mathbf{C}_2  \ar[r]^{\mathbf{H}}   \ar[d]^{\mathbf{R}_2}    &   \mathbf{C}    \ar[d]^{\mathbf{R}} \\
  \mathbf{C}_2  \ar[r]^{\mathbf{H}}       &         \mathbf{C}
}
\]
\caption{This diagram commutes in the homotopy category of chain complexes.}
\label{fig_chain_homotopy}
\end{figure}

In \cite{KM:Kh-unknot}, it is shown that
\begin{PR}
$\mathbf{H}$ induces an isomorphism in homology.
\end{PR}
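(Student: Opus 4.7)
The plan is to prove that $\mathbf{H}$ is a quasi-isomorphism by filtering both sides according to the last $c-1$ coordinates and reducing the problem, on associated graded, to a fiberwise application of the chain-level unoriented skein exact triangle at the first crossing. Concretely, for $p \in \{0,1,\dots,c-1\}$ let
$$F^p\mathbf{C}:=\bigoplus_{\substack{v\in \{0,1\}^c \\ v_2+\cdots+v_c\le p}} C_v, \qquad F^p\mathbf{C}_2:=\bigoplus_{\substack{v'\in\{0,1\}^{c-1}\\ |v'|_1\le p}} C_{2v'}.$$
Since every summand $f_{vu}$ of $\mathbf{F}$ and $\mathbf{F}_2$ requires $u\le v$, both differentials preserve these filtrations; likewise each summand $f_{(2v')u}$ of $\mathbf{H}$ has $u=(u_1,u')$ with $u'\le v'$, so $\mathbf{H}$ is filtration preserving.

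On the associated graded at level $p$, the complex $\mathrm{gr}^p\mathbf{C}_2$ equals $\bigoplus_{|v'|_1=p} C_{2v'}$ with only its internal Floer differentials, while $\mathrm{gr}^p\mathbf{C}$ splits as $\bigoplus_{|v'|_1=p}\bigl(C_{0v'}\oplus C_{1v'}\bigr)$ with the mapping-cone differential built from $f_{(1v')(0v')}\colon C_{1v'}\to C_{0v'}$ and the two Floer differentials. The induced map on each fiber is
$$\bigl(f_{(2v')(0v')},\; f_{(2v')(1v')}\bigr)\colon C_{2v'}\longrightarrow C_{0v'}\oplus C_{1v'}.$$
For fixed $v'$ the links $L_{0v'}$, $L_{1v'}$, $L_{2v'}$ differ only at the first crossing of $L$, and these maps are precisely the chain-level cobordism maps appearing in Kronheimer--Mrowka's unoriented skein exact sequence (reviewed in Section \ref{skein-triangle}). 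That exact sequence is proved by constructing, for the three-vertex cube attached to a single crossing, an enlarged family of metrics whose endpoint analysis shows that $C_{2v'}$ is quasi-isomorphic to $\mathrm{Cone}\bigl(f_{(1v')(0v')}\bigr)$ via exactly these two components. Hence each fiberwise map is a quasi-isomorphism.

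With both filtrations finite and bounded, a standard spectral-sequence comparison (or repeated five-lemma) argument then promotes this pointwise statement to the conclusion that $\mathbf{H}$ itself is a quasi-isomorphism. The main technical obstacle is the bookkeeping of signs: one must verify that the sign conventions built into \eqref{fvu} and inherited by $\mathbf{H}$ are compatible with Kronheimer--Mrowka's orientations of the moduli spaces, so that the two components of the fiberwise map really assemble into a genuine chain map into $\mathrm{Cone}(f_{(1v')(0v')})$ and reproduce the skein chain triangle. This is the same delicate sign analysis that promotes \eqref{mvu=} to \eqref{fvu=} in Section~6 of \cite{KM:Kh-unknot}, and no ideas beyond that analysis are required.
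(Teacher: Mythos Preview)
Your proposal is correct and reproduces the argument from \cite{KM:Kh-unknot} that the paper simply cites without proof: filter both cubes by the last $c-1$ coordinates, identify the associated graded pieces as the chain-level skein triangle at the first crossing, and invoke the comparison theorem for bounded filtrations. The paper itself offers no independent argument for this proposition, so there is nothing further to compare.
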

Iterating the above discussion, one can define $\mathbf{C}_{2,\cdots,2}:= \bigoplus_{v'\in \{0,1\}^{c-k}}C_{2\cdots 2v'}$ and obtain a sequence of 
anti-chain maps which induce isomorphisms in homology:
\begin{equation*}
  C(L)=\mathbf{C}_{2,\cdots,2}\to \cdots \to \mathbf{C}_{2,2}\to \mathbf{C}_2 \to \mathbf{C}
\end{equation*}
Take the composition of the above quasi-isomorphisms we obtain Theorem \ref{cube}. Moreover, \eqref{HR} implies that
\begin{PR}
The isomorphism
\begin{equation*}
  H_\ast(C(L))\to H_\ast (\mathbf{C})
\end{equation*}
respects the $\mathbf{R}$-action on both sides.
\end{PR}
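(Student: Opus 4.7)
The proof is essentially already packaged in equation \eqref{HR}: the second identity there says
\begin{equation*}
  \mathbf{H}\mathbf{R}_2 - \mathbf{R}\mathbf{H} = \mathbf{R}'\mathbf{F}_2 - \mathbf{F}\mathbf{R}',
\end{equation*}
which exhibits $\mathbf{R}'$ as a chain homotopy (in the graded sense appropriate to the anti-chain map $\mathbf{H}$) between the two compositions $\mathbf{H}\mathbf{R}_2$ and $\mathbf{R}\mathbf{H}$ from $\mathbf{C}_2$ to $\mathbf{C}$. So I would first note that, on any $\mathbf{F}_2$-cycle $x$, we have $(\mathbf{H}\mathbf{R}_2 - \mathbf{R}\mathbf{H})(x) = -\mathbf{F}\mathbf{R}'(x)$, which is an $\mathbf{F}$-boundary; hence on homology the induced map of $\mathbf{H}$ intertwines the induced actions of $\mathbf{R}_2$ and $\mathbf{R}$.

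The next step is to observe that the construction of $\mathbf{R}$, $\mathbf{R}_2$, $\mathbf{H}$ and $\mathbf{R}'$ is formally the same at every stage of the iterated cube. The quasi-isomorphism $C(L) \to \mathbf{C}$ of Theorem \ref{cube} is obtained as a composition
\begin{equation*}
  C(L) = \mathbf{C}_{2,\dots,2} \to \mathbf{C}_{2,\dots,2,2} \to \cdots \to \mathbf{C}_2 \to \mathbf{C},
\end{equation*}
where at the $k$-th step an anti-chain map $\mathbf{H}^{(k)}$ is built exactly as $\mathbf{H}$ was, using the same families of metrics $G_{vu}$ on ordered cubes $\{0,1,2\}^c$ with one more $2$-coordinate promoted. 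On each factor $\mathbf{C}_{2,\dots,2}$ one has an $\mathbf{R}$-operator $\mathbf{R}^{(k)}$ defined by the same recipe \eqref{rvu} using the divisor $V_R$, and a homotopy operator ${\mathbf{R}'}^{(k)}$ relating successive stages. The divisor $V_R$ is supported in a neighborhood $\nu(R)$ disjoint from every region where a skein move happens, so the intersection-theoretic definition of $r_{vu}$ and of ${\mathbf{R}'}^{(k)}$ makes sense uniformly across the cube, and the codimension-one boundary analysis of $(M_{vu}^+(\alpha,\beta)_3\cap V_R)$ that yielded \eqref{rvu=} applies verbatim. In particular the analogue of \eqref{HR} holds at every stage.

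Applying the previous paragraph at each step, every $\mathbf{H}^{(k)}$ induces a map on homology intertwining $\mathbf{R}^{(k)}$ with $\mathbf{R}^{(k+1)}$. Composing these equivariant isomorphisms gives the desired statement: the quasi-isomorphism $H_\ast(C(L)) \to H_\ast(\mathbf{C})$ intertwines $H_\ast(\mathbf{R})$ on both sides. On the $C(L)$-side this is the usual operator $\muu(R)$ acting on $\AHI(L;\mathbb{C})$, since the construction of $\mathbf{R}$ on $C(L)$ uses the unique metric and thus reduces to the definition of $\muu(R)$ in Section \ref{tangle}.

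The only genuinely delicate point is sign bookkeeping. The signs in \eqref{fvu} and in the definition of $\mathbf{R}'$ are chosen precisely so that the two identities in \eqref{HR} hold with the stated signs; one should verify that at each iterated stage $\mathbf{C}_{2,\dots,2,v'}$ the boundary count for the $3$-dimensional cut-down moduli space $({M}_{wu}^+(\alpha,\beta)_3\cap V_R)$ gives the analogue of \eqref{rvu=}, and then that the chosen signs in $\mathbf{R}^{(k)}$ and ${\mathbf{R}'}^{(k)}$ convert this into the homotopy relation $\mathbf{H}^{(k)}\mathbf{R}^{(k)} - \mathbf{R}^{(k+1)}\mathbf{H}^{(k)} = {\mathbf{R}'}^{(k)}\mathbf{F}^{(k)} - \mathbf{F}^{(k+1)}{\mathbf{R}'}^{(k)}$. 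This is the main technical obstacle, but it is a direct repetition of Kronheimer--Mrowka's sign analysis leading to \eqref{fvu=}, applied to moduli spaces cut down by the divisor $V_R$, and no new geometric input is required.
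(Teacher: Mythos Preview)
Your proposal is correct and follows exactly the paper's approach: the paper simply states that ``\eqref{HR} implies'' the proposition, relying on the iteration already described in the preceding paragraph, and your write-up is a faithful unpacking of that one-line justification. Your added remarks on the iteration, the role of $\mathbf{R}'$ as a chain homotopy, and the sign bookkeeping are all consistent with what the paper leaves implicit.
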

Since the $\mathbf{R}$-action on $\mathbf{C}$ respects the filtration used to derive the spectral sequence in Proposition \ref{KM-ss}, 
the spectral sequence can be equipped with
a $\mathbb{C}[X]$-module structure where the free variable $X$ acts by the $\mathbf{R}$-action. In particular, each generalized eigenspace 
forms a spectral sequence which is
a direct summand of the total spectral sequence. On the $E_1$-page, the $\mathbf{R}$-action is just the operator $\muu(R)$ on $\AHI$. So we have
\begin{PR}\label{AHI-ss}
Given links $L_v$ as before, for each $j\in \mathbb{Z}$ there is a spectral sequence whose $E_1$-page is
\begin{equation*}
  \bigoplus_{v\in \{0,1\}^c} \AHI(L_v,j)
\end{equation*}
and which converges to $\AHI(L,j)$. Moreover, the differential on the $E_1$ page is
\begin{equation}\label{d_1}
  d_1=\sum_i \sum_{v-u=e_i} (-1)^{\eta(v,u)}\AHI(S_{vu})
\end{equation}
where $e_i$ is the standard $i$-th basis vector in $\mathbb{R}^c$ and  $\eta(v,u)=\sum_{j=i}^c v_j$.
\end{PR}

\subsection{Differentials on the $E_1$-page}
Next we want to understand the $E_1$-page of the spectral sequence. When $v-u=e_i$, $L_v$ and $L_u$ only differ at one crossing. 
The surface $S_{vu}$ is a product cobordism
away from that crossing. It is  the union of a product cobordism and
a ``pair of pants'' cobordism embedded in $I\times S^1\times S^2$. 
We want to deal with the pair of pants part of $S_{vu}$ first. So we assume $S_{vu}$ is connected. 
There are three cases:
\begin{enumerate}[label=(\alph*)]
  \item $S_{vu}$ joins two trivial circles into one trivial circles or splits one trivial circle into two trivial circles.
  \item $S_{vu}$ joins a trivial circle and a non-trivial circle into a non-trivial circle or splits a non-trivial circle into a trivial circle and a non-trivial circle.
  \item $S_{vu}$ joins two non-trivial circles into a trivial circle or splits a trivial circle into two non-trivial circles.
\end{enumerate}

\begin{figure}
\centering
\begin{tikzpicture}
\draw[dashed] (0,0) circle [radius=1];  \draw[dashed] (0,0) circle [radius=2.5];
\draw[thick] (0,0) circle [radius=1.5];  \draw[thick] (0,0) circle [radius=2];

\draw[dashed] (6,0) circle [radius=1];  \draw[dashed] (6,0) circle [radius=2.5];
\draw[thick]  (6-1.3,0.75) arc [radius=1.5, start angle=150, end angle=490];
\draw[thick]  (6-1.732,1) arc [radius=2, start angle=150, end angle=490];

\draw[thick] (6-1.3,0.75) to [out=60, in=60]    (6-1.732,1)  ;
\draw[thick]  (6-0.964 ,1.149) to [out=240, in=240]  (6-1.285, 1.532)  ;

\draw[thick,red, dashed] (140:1.75cm) circle [radius=0.42];
\draw[thick,red, dashed] (140:1.75cm)+(6,0) circle [radius=0.42];

\end{tikzpicture}
\caption{Two non-trivial circles merge into a trivial circle in an annulus}\label{merging}
\end{figure}
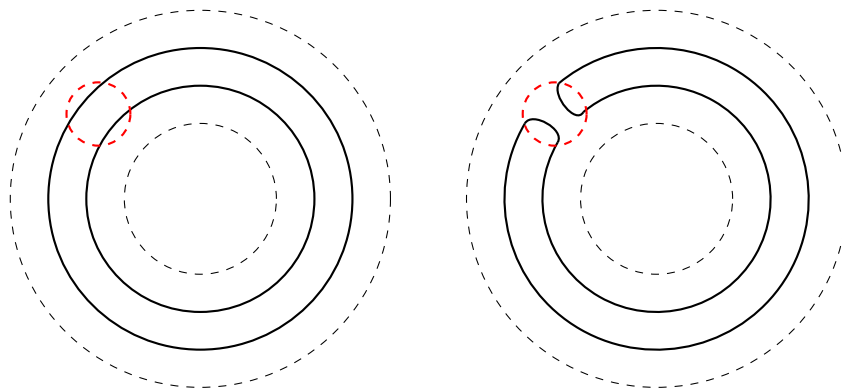

\begin{figure}
\centering

\begin{tikzpicture}
\draw[thick] (-1,0) to [out=280,in=180] (0,-2) to [out=0,in=260] (1,0); 
\draw[thick] (-2,1) to [out=350,in=90] (-1,0) to [out=270,in=30] (-2.5,-1.5) to   (-2.5,-4);

\draw[thick] (-2.5,-4) to [out=15,in=165] (2.5,-4) ;

\draw[thick] (2,1) to [out=190,in=90] (1,0) to [out=270,in=150] (2.5,-1.5) to    (2.5,-4);

\draw[dashed, thick,dash pattern=on 0.08cm off 0.1cm] (-2,-2.5) to [out=350,in=190](2,-2.5);

\draw[thick,dash pattern=on 2.23cm off 0.1cm on 0.1cm off 0.1cm on 0.1cm off 0.1cm on 0.1cm off 0.1cm on 0.1cm off 0.1cm on 0.1cm
 off 0.1cm on 0.1cm off 0.1cm on 0.1cm] (-2,1) to  (-2,-2.5);
\draw[thick,dash pattern=on 2.23cm off 0.1cm on 0.1cm off 0.1cm on 0.1cm off 0.1cm on 0.1cm off 0.1cm on 0.1cm off 0.1cm on 0.1cm
 off 0.1cm on 0.1cm off 0.1cm on 0.1cm off 0.1cm ] (2,1) to  (2,-2.5);

\end{tikzpicture}
\caption{Saddle cobordism}\label{saddle}
\end{figure}
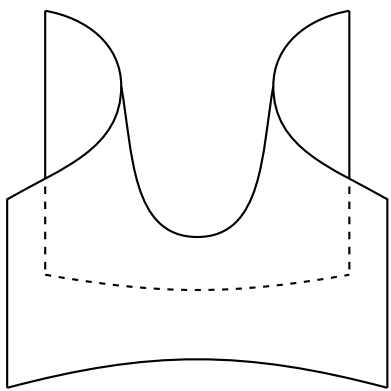

Figure \ref{merging} shows how two non-trivial circles merge into a trivial circle in Case (c). The cobordism in Case (c) is the product cobordism outside
the dashed red circle in Figure \ref{merging} and the saddle cobordism (see Figure \ref{saddle}) inside the dashed red circle in Figure \ref{merging}.

Case (a) is the same as the situation discussed in \cite{KM:Kh-unknot}. According to the discussion in Example \ref{AHIU1K1}, we know
$\AHI(U_1)\cong \II^\sharp(U_1)\cong\mathbb{Z}^2$.
Indeed, given any link $L$ included in a three-ball in the thickened annulus, we have
\begin{equation*}
  \AHI(L)\cong \II^\sharp(L)
\end{equation*}
by Proposition \ref{AHI-Isharp}. Let $U_0$ be the empty link. Fix a generator
\begin{equation*}
  \mathbf{u}_0\in \AHI(U_0)\cong \mathbb{Z}
\end{equation*}
Let $D^+$ (resp. $D^-$) be standard disks in $I\times B^3 \subset I\times A\times I$ which give oriented cobordisms from $U_0$ to $U_1$ (resp. $U_1$ to $U_0$).
Similarly, let $\Sigma^+$  (resp. $\Sigma^-$) 
be standard punctured tori in $I\times B^3 \subset I\times A\times I$
which give oriented cobordisms from $U_0$ to $U_1$ (resp. $U_1$ to $U_0$).

We summarize some results from \cite{KM:Kh-unknot}*{Section 8} which we will need later.
\begin{PR}\label{unknot-gen}
There are generators $\mathbf{v}_+$ and $\mathbf{v}_-$ for $\AHI(U_1)\cong \mathbb{Z}^2$ characterized by
\begin{eqnarray*}
  \AHI(D^+)(\mathbf{u}_0) &=& \mathbf{v}_+ \\
  \AHI(D^-)(\mathbf{v}_-) &=& \mathbf{u}_0
\end{eqnarray*}
The degrees of two generators differ by $2$. And
\begin{eqnarray*}
  \AHI(\Sigma^+)(\mathbf{u}_0) &=& 2\mathbf{v}_- \\
  \AHI(\Sigma^-)(\mathbf{v}_+) &=& 2\mathbf{u}_0
\end{eqnarray*}
In terms of those generators, the map induced by the pair of pants cobordism from $U_2$ to $U_1$ is given by
\begin{align*}
  \mathbf{v}_+ \otimes \mathbf{v}_+ &\mapsto \mathbf{v}_+ \\
   \mathbf{v}_+ \otimes \mathbf{v}_- &\mapsto \mathbf{v}_- \\
   \mathbf{v}_- \otimes \mathbf{v}_+ &\mapsto \mathbf{v}_- \\
   \mathbf{v}_-\otimes \mathbf{v}_- &\mapsto  0
\end{align*}
The map induced by the pair of pants cobordism from $U_1$ to $U_2$ is given by
\begin{align*}
  \mathbf{v}_+ &\mapsto \mathbf{v}_+\otimes \mathbf{v}_- + \mathbf{v}_-\otimes \mathbf{v}_+  \\
  \mathbf{v}_- &\mapsto \mathbf{v}_-\otimes \mathbf{v}_-
\end{align*}
\end{PR}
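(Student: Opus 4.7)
The plan is to reduce this proposition directly to the corresponding statements for the unreduced singular instanton Floer homology $\II^\sharp$ established in Section 8 of \cite{KM:Kh-unknot}. The key point is that the links $U_0$, $U_1$, $U_2$ and every cobordism in question ($D^\pm$, $\Sigma^\pm$, and the pair-of-pants cobordisms) are by hypothesis contained in a three-ball $B^3 \subset S^1\times D$, respectively a product cylinder $I\times B^3\subset I\times S^1\times D$. So we are entirely in the range where Proposition \ref{AHI-Isharp} applies.

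First I would fix generators. By Proposition \ref{AHI-Isharp} the canonical isomorphism $\AHI(U_i)\cong \II^\sharp(U_i)$ for $i=0,1$ is natural under cobordisms lying in a cylinder $I\times B^3$. Transport the generators $\mathbf{u}_0\in \II^\sharp(U_0)\cong \mathbb{Z}$ and $\mathbf{v}_\pm\in \II^\sharp(U_1)\cong\mathbb{Z}^2$ constructed in Section 8 of \cite{KM:Kh-unknot} to $\AHI$ via this isomorphism, and use the same symbols. The characterization of $\mathbf{v}_\pm$ by $\AHI(D^+)(\mathbf{u}_0)=\mathbf{v}_+$ and $\AHI(D^-)(\mathbf{v}_-)=\mathbf{u}_0$ is then immediate from functoriality: $\AHI(D^\pm)$ corresponds under the isomorphism of Proposition \ref{AHI-Isharp} to $\II^\sharp(D^\pm)$, and the analogous identities hold there by construction. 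The fact that $\deg \mathbf{v}_+$ and $\deg \mathbf{v}_-$ differ by $2$ then follows from the $\mathbb{Z}/4$-grading on $\II^\sharp(U_1)$ computed in \cite{KM:Kh-unknot}.

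For the evaluations $\AHI(\Sigma^\pm)(\mathbf{u}_0)=2\mathbf{v}_-$ and $\AHI(\Sigma^-)(\mathbf{v}_+)=2\mathbf{u}_0$, apply the same transport: $\Sigma^\pm\subset I\times B^3$, so the induced maps correspond to $\II^\sharp(\Sigma^\pm)$, and the equalities are the content of the $\II^\sharp$-calculation in Section 8 of \cite{KM:Kh-unknot}. The same argument handles the pair-of-pants formulas for $U_2\to U_1$ and $U_1\to U_2$: these cobordisms are contained in $I\times B^3$, so the algebra $V=\mathbb{Z}\{\mathbf{v}_+,\mathbf{v}_-\}$ receives exactly the Frobenius structure computed in \cite{KM:Kh-unknot}.

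The only potential obstacle is verifying that the naturality statement at the end of Proposition \ref{AHI-Isharp} is strong enough: one needs to check that the excision cobordism implementing the isomorphism $\AHI(L)\cong \II^\sharp(L)$ commutes strictly (not just up to sign) with the maps $\AHI(D^\pm)$, $\AHI(\Sigma^\pm)$ and the pair-of-pants cobordism. Because all these cobordisms are supported in a cylinder $I\times B^3$ disjoint from the excision region, the excision cobordism can be chosen as a product over this cylinder, which forces strict commutativity and preserves orientations with no sign ambiguity. Once this is in place the proposition follows at once from the Kronheimer--Mrowka calculation with no further analysis required.
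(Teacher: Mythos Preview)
Your proposal is correct and matches the paper's approach exactly: the paper does not give a separate proof of this proposition but simply introduces it as a summary of the computations in \cite[Section 8]{KM:Kh-unknot}, relying implicitly on the identification $\AHI\cong\II^\sharp$ for links in a three-ball from Proposition \ref{AHI-Isharp}. Your write-up just makes this transport explicit, including the naturality check that the excision cobordism is a product over the region supporting $D^\pm$, $\Sigma^\pm$, and the pair-of-pants, which is the right justification.
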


Recall that
\begin{equation*}
  \AHI(\mathcal{K}_1)\cong \mathbb{Z}^2
\end{equation*}
We want to fix preferred generators so that the above isomorphism can be made into an identification. Assume that $\mathcal{K}_1$ is oriented. Let $m$ be
a meridian of $\mathcal{K}_1$ in $S^1\times S^2$ that is oriented properly: the product orientation of $S^1\times m$ coincides with the boundary orientation
from the tubular neighborhood of $\mathcal{K}_1$. From the discussion in Section \ref{tangle} we know
$\II(S^1\times S^2, \mathcal{K}_1\sqcup \mathcal{K}_2, u)$ is generated by two critical points of the Chern-Simons functional whose degree differ by $2$.
Those critical points are $SU(2)$ representations of
$$\pi_1(S^1\times S^2\setminus \mathcal{K}_1\cup \mathcal{K}_2 \cup u)$$
which map $S^1\times \{\pt\}$ to $\mathbf{i}$, a meridian of one component of $\mathcal{K}_2$ to $\mathbf{j}$ and $m$ to $\pm\mathbf{i}$.
We call the generator that maps $m$ to $\mathbf{i}$ by $\rho_+$ and the other one by $\rho_-$. To be more precise, a flat connection can only
determine a generator of the Floer chain complex up to a sign. We just pick one of the two possible generators associated to
the flat connection. 
Now we have
\begin{equation}\label{K1W1}
  \AHI(\mathcal{K}_1)=\mathbb{Z}\{\rho_+,\rho_-\}
\end{equation}
Assume $\mathcal{K}_m$ is oriented.
By excision and \eqref{K1W1} we have
\begin{equation*}
  \AHI(\mathcal{K}_m)\cong \AHI(\mathcal{K}_1)^{\otimes m}= \mathbb{Z}\{\rho_+,\rho_-\}^{\otimes m}
\end{equation*}
But the isomorphism from the excision is induced by a cobordism, which has an overall sign ambiguity. Again we pick one from
the two possibilities then we obtain an identification 
\begin{equation}\label{KmWm}
   \AHI(\mathcal{K}_m) = \mathbb{Z}\{\rho_+,\rho_-\}^{\otimes m}
\end{equation}
Now assume $\mathcal{K}_m$ is oriented.
From the discussion in Section \ref{tangle} again,  
we know  $\AHI(\mathcal{K}_m)$ is generated by $2^m$ $SU(2)$ representations (modulo conjugacy) of
$$\pi_1(S^1\times S^2\setminus \mathcal{K}_m\cup \mathcal{K}_2 \cup u)$$
which map $S^1\times \{\pt\}$ to $\mathbf{i}$, a meridian of of one component of $\mathcal{K}_2$ to $\mathbf{j}$ and $m_k$ to $\pm\mathbf{i}$ where
$m_k$ is a properly oriented meridian of the $k$-th component of $\mathcal{K}_m$. 
The element 
\begin{equation*}
  \otimes_{k=1}^m \rho_k \in \mathbb{Z}\{\rho_+,\rho_-\}^{\otimes m}
\end{equation*}
where $\rho_k=\rho_\pm$ in the identification \eqref{KmWm} corresponds one of the two generators determined by the
representation that
 maps $m_k$ to $\pm \mathbf{i}$ where the sign depends on whether $\rho_k$ is $\rho_+$ or $\rho_-$. 
\begin{PR}\label{VW}
Let $V=\mathbb{Z}\{\mathbf{v}_+,\mathbf{v}_-\}$ be the group $\AHI(U_1)$ and $W=\mathbb{Z}\{\rho_+,\rho_-\}$ be the group $\AHI(\mathcal{K}_1)$.
We have isomorphisms of $\mathbb{Z}/4$ graded abelian groups
\begin{equation*}
  \Phi_{n,m}: V^{\otimes n}\otimes W^{\otimes m}\cong \AHI(U_n\sqcup \mathcal{K}_m)
\end{equation*}
for all $n$ and $m$ satisfying the following properties:
\begin{enumerate}[label=(\roman*)]
  \item $\Phi_{0,m}$ coincides with the identification \eqref{KmWm}.
  \item Let $D^+_n$ be the cobordism from $U_0$ to $U_n$ from standard disks as before. Then
       \begin{equation*}
         \AHI(D^+_n)(\mathbf{u}_0)=\Phi_{n,0}(\mathbf{v}_+\otimes \cdots \otimes \mathbf{v}_+)
       \end{equation*}
  \item Suppose $n,m > 0$. $D^+_n\sqcup I\times \mathcal{K}_m$ is a cobordism from $\mathcal{K}_m$ to $U_n\sqcup \mathcal{K}_m$. Then
        \begin{equation*}
          \AHI(D^+_n\sqcup I\times \mathcal{K}_m)(\Phi_{0,m}(\rho_1\otimes \cdots \otimes \rho_m))=\Phi_{n,m}(\mathbf{v}_+\otimes \cdots \otimes \mathbf{v}_+\otimes
          \rho_1\otimes \cdots \otimes \rho_m)
        \end{equation*}
        where $\rho_k=\rho_\pm$.
  \item The isomorphism is canonical for split cobordisms from $U_n\sqcup \mathcal{K}_{m}$ to itself.
\end{enumerate}
\end{PR}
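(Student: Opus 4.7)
My plan is to build $\Phi_{n,m}$ by iterating the split-sum isomorphism of Proposition \ref{link-sum}. Since $\AHI(U_1)=V$ is torsion-free and \eqref{KmWm} identifies $\AHI(\mathcal{K}_m)$ with $W^{\otimes m}$, iterating \eqref{L1+L2} yields a $\mathbb{Z}/4$-graded excision isomorphism
\[
\Psi_{n,m}\colon V^{\otimes n}\otimes W^{\otimes m}\;\longrightarrow\;\AHI(U_n\sqcup \mathcal{K}_m)
\]
that intertwines $\muu(R)$ with the sum of the $\muu(R_i)$ on the factors. Each $\Psi_{n,m}$ is induced by an oriented excision cobordism and is therefore well-defined up to a single overall sign. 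The content of the proposition is to choose these signs consistently so that (i)--(iv) hold, and my plan is to pin them down by feeding in Proposition \ref{unknot-gen} and the naturality of \eqref{L1+L2} under split cobordisms.

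For $m=0$, the cobordism $D_n^+\colon U_0\to U_n$ is the disjoint union of $n$ copies of $D^+$ in disjoint three-balls, so by split-cobordism naturality $\AHI(D_n^+)=\AHI(D^+)^{\otimes n}\colon \mathbb{Z}\to V^{\otimes n}$; by Proposition \ref{unknot-gen} this carries $\mathbf{u}_0$ to $\mathbf{v}_+^{\otimes n}$. I define $\Phi_{n,0}$ to be the sign choice of $\Psi_{n,0}$ for which $\AHI(D_n^+)(\mathbf{u}_0)=\Phi_{n,0}(\mathbf{v}_+^{\otimes n})$; this settles (ii) by construction. For $n,m>0$, consider the split cobordism $D_n^+\sqcup (I\times\mathcal{K}_m)\colon \mathcal{K}_m\to U_n\sqcup \mathcal{K}_m$. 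By naturality it factors as $\AHI(D_n^+)\otimes \idd$; combined with (i) it sends $\Phi_{0,m}(\rho_1\otimes\cdots\otimes\rho_m)$ to a well-defined element of $\AHI(U_n\sqcup\mathcal{K}_m)$. Define $\Phi_{n,m}$ to be the sign choice of $\Psi_{n,m}$ for which this element equals $\Phi_{n,m}(\mathbf{v}_+^{\otimes n}\otimes\rho_1\otimes\cdots\otimes\rho_m)$; a single global sign suffices uniformly in the $\rho_i$ because both sides of the equation are built tensorially from the same factorization.

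Property (iv) is then automatic from the same naturality: for $S=S_1\sqcup S_2\colon U_n\sqcup \mathcal{K}_m\to U_n\sqcup \mathcal{K}_m$ with $S_1, S_2$ in disjoint $S^1\times D_i$, the conjugated operator $\Phi_{n,m}^{-1}\circ\AHI(S)\circ\Phi_{n,m}$ is insensitive to the overall sign of $\Phi_{n,m}$ and equals $\AHI(S_1)\otimes\AHI(S_2)$ by the split-cobordism naturality of \eqref{L1+L2}. The main obstacle throughout is sign bookkeeping: the excision isomorphism and the various cobordism-induced maps are only defined up to sign in general. The key safeguard is that all the cobordisms in play ($D^+$, product cylinders, and the saddle excision cobordism of Figure \ref{excision-co}) can be oriented, so by Section 4.4 of \cite{KM:Kh-unknot} their induced maps are sign-unambiguous, leaving only the single overall sign of the excision isomorphism at each $(n,m)$, which is precisely what (ii) and (iii) nail down.
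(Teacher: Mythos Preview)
Your proposal is correct and follows essentially the same approach as the paper: obtain the isomorphisms from Proposition~\ref{link-sum}, note that the only indeterminacy is a single overall sign coming from the excision cobordism, pin that sign down using (i)--(iii) together with Proposition~\ref{unknot-gen}, and derive (iv) from the split-cobordism naturality of \eqref{L1+L2}. The paper's proof is a terse sketch of exactly this outline; you have simply filled in the details.
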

\begin{proof}
The isomorphisms can be obtained from Proposition \ref{link-sum}. The only issue is the sign ambiguity in the isomorphisms induced by cobordisms. The sign ambiguity
is resolved by requirements (i) (ii) and (iii) (combined with Proposition \ref{unknot-gen}). (iv) follows from the naturality of the isomorphism in
Proposition \ref{link-sum}.
\end{proof}
\begin{rmk}
 When $m=0$, this is exactly Corollary 8.5 in \cite{KM:Kh-unknot}.
\end{rmk}

We want to define an auxiliary operator on $\AHI(L)$ similar to the one used in \cite{KM:Kh-unknot}*{Section 8.3}. Let $L$ be a link in $A\times I$ with a marked point
$p$ on it. Also fix an orientation of $L$ at $p$. Form a cobordism from $L$ to itself by taking the connected sum of $I\times L$ and a standard torus at $(0,p)$.
This cobordism induces a map
\begin{equation*}
  \sigma:\AHI(L)\to \AHI(L)
\end{equation*}
of degree $2$.
\begin{PR}\label{sigma-action}
For any link $L$ in $A\times I$ and any marked point $p$ on $L$, we have $\sigma^2=0$. If $L=\mathcal{K}_m$, then $\sigma=0$ for any marked point $p$ on $\mathcal{K}_m$.
\end{PR}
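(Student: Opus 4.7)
\emph{Plan.} Both statements are proved by factoring the torus-attachment cobordism defining $\sigma$ as a fission saddle (introducing a trivial circle near $p$) followed by a fusion saddle (absorbing that trivial circle), so that $\sigma$ takes the form $\mu\circ\Delta$ through a Floer group with one extra trivial component. Under this factorization, the computation reduces to the Khovanov-style Frobenius-algebra structure on $V$ and $W$ recorded in Propositions \ref{unknot-gen} and \ref{VW}.

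For $\sigma^2=0$ on arbitrary $L$: by naturality of $\AHI$ for cobordisms that are product outside a $4$-ball (cf.\ Proposition \ref{link-sum} and item (iv) of Proposition \ref{VW}), it suffices to verify the identity after localizing to the case $L=U_1$, where $\sigma$ acts on $V=\mathbb{Z}\{\mathbf v_+,\mathbf v_-\}$. Applying the fission and fusion formulas from Proposition \ref{unknot-gen} in turn gives
\[
\sigma(\mathbf v_+)=2\mathbf v_-,\qquad \sigma(\mathbf v_-)=0,
\]
so $\sigma^2=0$ immediately, and the general case follows by naturality.

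For $\sigma=0$ on $\mathcal{K}_m$: by the same naturality, the torus-attachment at a point $p$ of the $i$-th component of $\mathcal{K}_m$ acts only on the $i$-th tensor factor of $\AHI(\mathcal{K}_m)=W^{\otimes m}$ under the identification of Proposition \ref{VW}, so we may reduce to $L=\mathcal{K}_1$. The fission saddle $\mathcal{K}_1\to \mathcal{K}_1\sqcup U_1$ is modelled (matching the instanton computation against the Khovanov Frobenius structure as in Section \ref{AKh}) by the rule
\[
\rho_\pm \longmapsto \mathbf v_-\otimes \rho_\pm,
\]
while the fusion saddle $\mathcal{K}_1\sqcup U_1\to \mathcal{K}_1$ annihilates any vector of the form $\mathbf v_-\otimes\rho_\pm$. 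Composing yields $\sigma=0$ on $W$, and hence on all of $\AHI(\mathcal{K}_m)$.

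The main obstacle is justifying the saddle formulas employed above. The unknot factorization is essentially contained in \cite{KM:Kh-unknot}*{Section 8}, but the "non-trivial circle splits off a trivial circle" formula used in the $\mathcal{K}_1$ reduction is not a ball-local cobordism and therefore requires an independent computation, most naturally by analyzing the flat connections and cobordism maps directly in $(S^1\times S^2, \mathcal{K}_1\sqcup U_1\sqcup\mathcal{K}_2, u)$ and fixing the signs via Proposition \ref{VW}(iii). Once this saddle formula is in hand, both vanishings follow by the straightforward algebraic composition above.
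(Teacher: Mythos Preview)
Your approach to $\sigma^2=0$ is close to the paper's, but the reduction step is not properly justified. Proposition~\ref{link-sum} and item~(iv) of Proposition~\ref{VW} concern \emph{split} links and split cobordisms; the torus attachment at $p\in L$ is not split from $L$, so ``localizing to $U_1$ by naturality'' does not follow from those statements. The paper instead factors the double-torus cobordism $I\times L\mathbin{\#}T^2\mathbin{\#}T^2$ directly as: two copies of $\Sigma^+$ creating $U_2$, then the pair of pants $U_2\to U_1$, then the punctured product cobordism merging this $U_1$ into $L$. Since $\AHI(\Sigma^+)(\mathbf{u}_0)=2\mathbf{v}_-$ and the pair of pants annihilates $\mathbf{v}_-\otimes\mathbf{v}_-$ (Proposition~\ref{unknot-gen}), the composite is zero before it ever touches $L$. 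Your computation on $U_1$ is morally the same input, but the passage to general $L$ is this explicit factorization, not an appeal to split naturality.

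For $\sigma=0$ on $\mathcal{K}_m$, your route is genuinely different and, as you yourself note, incomplete. The fission/fusion formulas $\rho_\pm\mapsto\mathbf{v}_-\otimes\rho_\pm$ and $\mathbf{v}_-\otimes\rho_\pm\mapsto 0$ are precisely the content of Proposition~\ref{Sb}, whose proof in the paper \emph{uses} the present proposition (it invokes $\sigma(y)=0$ on $W$ to show $\AHI(S_b^+)(\mathbf{v}_-,y)=0$ and to kill the $\mathbf{v}_+$ component of $\AHI(S_b^-)(y)$). So your argument is circular unless you supply the independent moduli-space computation you allude to, which is not straightforward since $S_b^\pm$ are not ball-local.

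The paper avoids this entirely with a short algebraic argument: having proved $\sigma^2=0$, one observes that $\sigma$ commutes with $\muu(R)$ and hence preserves the generalized eigenspaces $\AHI(\mathcal{K}_1,\pm 1)$, each of which is one-dimensional. A nilpotent endomorphism of a one-dimensional space is zero, so $\sigma=0$ over $\mathbb{C}$; torsion-freeness of $\AHI(\mathcal{K}_1)$ gives it over $\mathbb{Z}$, and excision extends it to $\mathcal{K}_m$. This uses no saddle computations and is exactly what makes the subsequent proof of Proposition~\ref{Sb} non-circular.
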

\begin{proof}
By the definition of $\sigma$, we need to attach two tori to the product cobordism $I\times L$ in order to calculate $\sigma^2$.
The cobodism $W$ obtained by attaching two tori to $I\times L$ can be viewed as the composition of 
\begin{itemize}
  \item Two punctured tori $\Sigma^+$,
  \item The pair of pants cobordism,
  \item The punctured (at one point) product cobordism of $L$.
\end{itemize}
By Proposition \ref{unknot-gen}, $\AHI(\Sigma^+)(\mathbf{u}_0)=\mathbf{v}_-$ 
and $\AHI(\text{pair of pants})(\mathbf{v}_-,\mathbf{v}_-)=0$.
From the functoriality of $\AHI$, we know $\AHI(W)=0$. So we have $\sigma^2=\AHI(W)=0$.

Suppose $L=\mathcal{K}_1$. We have $\AHI(L)\cong \mathbb{Z}^2$. Decompose $\AHI(L, \mathbb{C})$ into generalized eigenspaces of $\muu(R)$, we have
\begin{equation*}
  \AHI(L, \mathbb{C})=\AHI(L,1)\oplus \AHI(L,-1)
\end{equation*}
where both eigenspaces are 1-dimensional. Since $\sigma$ commutes with $\muu(R)$, it preserves the two eigenspaces. Then $\sigma^2=0$ implies $\sigma=0$
because a nilpotent
operator on an 1-dimensional space must be $0$. $\AHI(L)$ has no torsion, so we do not lose any information using complex coefficients. By excision we know
$\sigma=0$ on $\AHI(\mathcal{K}_m)$.
\end{proof}

Now we want to understand differential maps induced by cobordisms in Case (b). Let $S_b^+$ be the cobordism from $U_1\sqcup \mathcal{K}_1$ to $\mathcal{K}_1$.
It induces a map
\begin{equation*}
  \AHI(S_b^+): \AHI(U_1\sqcup \mathcal{K}_1)\to \AHI(\mathcal{K}_1)
\end{equation*}
Using Proposition \ref{VW}, we can think of
$\AHI(S_b^+)$ as a map from $V\otimes W \to W$. Similarly let $S_b^-$ be the cobordism from $\mathcal{K}_1$ to $U_1\sqcup \mathcal{K}_1$.
Then we have a map $\AHI(S_b^-): W\to V\otimes W$.
\begin{PR}\label{Sb}
Let $y$ be an arbitrary element in $W$. We have
\begin{eqnarray*}
  \AHI(S_b^+)(\mathbf{v}_+,y) &=& y \\
  \AHI(S_b^+)(\mathbf{v}_-,y) &=& 0 \\
  \AHI(S_b^-)(y)              &=& \mathbf{v}_-\otimes y \\
 \end{eqnarray*}
\end{PR}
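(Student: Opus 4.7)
The strategy is to reduce each identity to the functoriality of $\AHI$ by recognizing the saddle cobordism $S_b^\pm$ as one half of a familiar cobordism after composition with a disk or a punctured torus. This is the same bootstrapping philosophy used in \cite{KM:Kh-unknot} to analyze Case (a), and it works here because we can always ``fill in'' the trivial circle $U_1$ with a surface in $I\times B^3$.

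First I would handle $\AHI(S_b^+)(\mathbf{v}_+\otimes y)=y$. By Proposition \ref{VW}(iii), the element $\mathbf{v}_+\otimes y\in \AHI(U_1\sqcup\mathcal{K}_1)$ equals $\AHI(D^+\sqcup I\times\mathcal{K}_1)(y)$, where $D^+$ is the standard disk cobordism $U_0\to U_1$. The composite cobordism $S_b^+\circ(D^+\sqcup I\times\mathcal{K}_1)$ is topologically a disk glued to a saddle along $U_1$, which is isotopic (relative to the boundary) to the product cobordism $I\times\mathcal{K}_1$. Functoriality of $\AHI$ then gives the desired identity. The sign is unambiguous since all surfaces involved are oriented cobordisms.

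For $\AHI(S_b^+)(\mathbf{v}_-\otimes y)=0$, I would instead pre-compose with the punctured-torus cobordism $\Sigma^+\sqcup I\times\mathcal{K}_1$. By Proposition \ref{unknot-gen}, $\AHI(\Sigma^+\sqcup I\times \mathcal{K}_1)(y)=2\mathbf{v}_-\otimes y$, while the composite $S_b^+\circ(\Sigma^+\sqcup I\times\mathcal{K}_1)$ is the cobordism from $\mathcal{K}_1$ to itself obtained by attaching a handle to the product — that is, it is precisely the $\sigma$-cobordism of Proposition \ref{sigma-action} with $L=\mathcal{K}_1$. Since $\sigma=0$ on $\AHI(\mathcal{K}_m)$ by that proposition, we get $2\AHI(S_b^+)(\mathbf{v}_-\otimes y)=0$. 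Because $\AHI(\mathcal{K}_1)\cong\mathbb{Z}^2$ is torsion-free, this yields the vanishing.

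For $\AHI(S_b^-)(y)=\mathbf{v}_-\otimes y$, write the output as $\mathbf{v}_+\otimes a(y)+\mathbf{v}_-\otimes b(y)$ for some $\mathbb{Z}$-linear $a,b:W\to W$ determined by Proposition \ref{VW}. Post-composing with $D^-\sqcup I\times\mathcal{K}_1$ produces the identity cobordism on $\mathcal{K}_1$ (disk glued to saddle, same as above), and $\AHI(D^-)$ sends $\mathbf{v}_-\mapsto\mathbf{u}_0$ and $\mathbf{v}_+\mapsto 0$ by degree considerations and Proposition \ref{unknot-gen}; this forces $b(y)=y$. Post-composing instead with $\Sigma^-\sqcup I\times\mathcal{K}_1$ gives the $\sigma$-cobordism on $\mathcal{K}_1$, which is $0$; together with $\AHI(\Sigma^-)(\mathbf{v}_+)=2\mathbf{u}_0$ and $\AHI(\Sigma^-)(\mathbf{v}_-)=0$ (again by degree), this yields $2a(y)=0$, hence $a(y)=0$ by torsion-freeness.

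The main obstacle is bookkeeping: one must check that the topological identifications ``disk $+$ saddle $=$ product'' and ``punctured torus $+$ saddle $=$ product with handle'' are realized by honest isotopies of oriented cobordisms in $I\times A\times I$ so that no unresolved sign appears, and that the factors of $2$ in Proposition \ref{unknot-gen} can be divided out legitimately using the torsion-freeness of $\AHI(\mathcal{K}_1)$. Both ingredients are straightforward but need to be stated explicitly.
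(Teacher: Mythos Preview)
Your proof is correct and follows essentially the same approach as the paper: compose $S_b^\pm$ with $D^\pm$ or $\Sigma^\pm$ along the $U_1$ boundary to reduce to the product cobordism or the $\sigma$-cobordism on $\mathcal{K}_1$, then invoke Propositions \ref{unknot-gen} and \ref{sigma-action}. If anything, you are more careful than the paper in tracking the factor of $2$ coming from $\AHI(\Sigma^\pm)$ and explicitly appealing to torsion-freeness of $\AHI(\mathcal{K}_1)$ to cancel it.
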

\begin{proof}
If we glue a disk $D^+$ to $S_b^+$ along $U_1$, $S_b^+\cup D^+$ becomes a product cobordism
$I\times \mathcal{K}_1$. By Proposition \ref{unknot-gen} and the functoriality of $\AHI$, we have
\begin{equation*}
  \AHI(S_b^+)(\mathbf{v}_+,y)=\AHI(S_b^+\cup D^+)(y)=y
\end{equation*}
If we glue a punctured torus $\Sigma^+$ to  $S_b^+$ along $U_1$, then $S_b^+\cup \Sigma^+$ is the same as the connected sum of the product cobordism
$I\times \mathcal{K}_1$ and a torus. By Proposition \ref{unknot-gen}, Proposition \ref{sigma-action} and the functoriality of $\AHI$, we have
\begin{equation*}
  \AHI(S_b^+)(\mathbf{v}_-,y)=\AHI(S_b^+\cup \Sigma^+)(y)=\sigma(y)=0
\end{equation*}

Suppose
\begin{equation*}
  \AHI(S_b^-)(y)=\mathbf{v}_+\otimes y_1 + \mathbf{v}_-\otimes y_2
\end{equation*}
where $y_1,y_2\in W$. Glue a disk $D^-$ to $S_b^-$ along $U_1$, $S_b^-$ becomes the product cobordism $I\times \mathcal{K}_1$.
Using Proposition \ref{unknot-gen} and the functoriality of $\AHI$, we have
\begin{equation*}
  y_2=\AHI(S_b^-\cup D^-)(y)=y
\end{equation*}
Glue a punctured torus $\Sigma^-$ to $S_b^-$ along $U_1$, then
then $S_b^-\cup \Sigma^-$ is the same as the connected sum of the product cobordism
$I\times \mathcal{K}_1$ and a torus. By Proposition \ref{unknot-gen}, Proposition \ref{sigma-action} and the functoriality of $\AHI$,
\begin{equation*}
  y_1=\AHI(S_b^-\cup\Sigma^-)(y)=0
\end{equation*}
Therefore we have
\begin{equation*}
   \AHI(S_b^-)(y)= \mathbf{v}_-\otimes y
\end{equation*}
\end{proof}

Let $S_c^+$ and $S_c^-$ be the cobordisms in Case (c). They are cobordisms from $\mathcal{K}_2$ to $U_1$ and 
$U_1$ to $\mathcal{K}_2$ respectively. 
So they induce maps from $W\otimes W$ to $V$ and $V$ to $W\otimes W$ respectively.
\begin{PR}\label{Sc}
There is a number $\lambda=\pm1$ such that
\begin{eqnarray*}
  \AHI(S_c^+)(\rho_+,\rho_+) &=& \lambda \mathbf{v}_- \\
  \AHI(S_c^+)(\rho_-,\rho_-)  &=& -\lambda \mathbf{v}_- \\
  \AHI(S_c^+)(\rho_+,\rho_-) &=& \AHI(S_c^+)(\rho_-,\rho_+)=0 \\
  \AHI(S_c^-)(\mathbf{v}_+) &=& \lambda (\rho_+\otimes \rho_+ -\rho_-\otimes \rho_-)\\
   \AHI(S_c^-)(\mathbf{v}_-) &=& 0
\end{eqnarray*}
\end{PR}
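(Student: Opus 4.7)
The plan is to parallel the proof of Proposition \ref{Sb} by capping off the $U_1$ ends with the standard cobordisms $D^\pm$ and $\Sigma^\pm$ and extracting matrix coefficients via Proposition \ref{unknot-gen}. Writing
\[
\AHI(S_c^+)(\rho_i \otimes \rho_j) = c_{ij} \mathbf{v}_+ + d_{ij} \mathbf{v}_-,
\]
functoriality gives $\AHI(D^- \cup S_c^+)(\rho_i \otimes \rho_j) = d_{ij}\mathbf{u}_0$ and $\AHI(\Sigma^- \cup S_c^+)(\rho_i \otimes \rho_j) = 2c_{ij}\mathbf{u}_0$, so the task reduces to computing those two cobordism maps from $\AHI(\mathcal{K}_2) = W \otimes W$ to $\AHI(U_0) = \mathbb{Z}\mathbf{u}_0$.

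The key input is that $c_{ij} = 0$: the cobordism $\Sigma^- \cup S_c^+$ has genus one, and its handle, initially attached by $\Sigma^-$ at the intermediate $U_1$, can be slid through the pair of pants $S_c^+$ and absorbed at one component of $\mathcal{K}_2$ on the incoming end. Under this isotopy the cobordism factors as $(D^- \cup S_c^+) \circ \sigma$, where $\sigma$ is the torus-attachment operator on $\AHI(\mathcal{K}_2)$ at a marked point on a component of $\mathcal{K}_2$. Proposition \ref{sigma-action} gives $\sigma = 0$ on $\AHI(\mathcal{K}_m)$, so all $c_{ij}$ vanish. The symmetric argument, capping the input side of $S_c^-$ with $\Sigma^+$ and sliding the handle through the pair of pants, shows the $\mathbf{v}_-$ coefficient of $\AHI(S_c^-)$ vanishes.

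For the remaining coefficients, the $\mathbb{Z}/4$ absolute grading (Section \ref{re-inst}) forces $d_{+-} = d_{-+} = 0$: the elements $\rho_+ \otimes \rho_-$ and $\rho_- \otimes \rho_+$ lie in a $\mathbb{Z}/4$-degree differing by $2$ from $\rho_\pm \otimes \rho_\pm$, while the image, being a multiple of $\mathbf{v}_-$, is homogeneous. The antisymmetry $d_{--} = -d_{++}$ follows from the involution $\alpha \mapsto (\sqrt{-1})^{\deg \alpha} \alpha$ of Corollary \ref{specbound}: this operator commutes with $\AHI(S_c^+)$ up to a degree-shift factor, interchanges the $\pm 2$-eigenspace generators $\rho_\pm \otimes \rho_\pm$ of $\muu(R)$ on $W \otimes W$, and acts with opposite signs on $\mathbf{v}_+$ and $\mathbf{v}_-$, producing the required sign. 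Setting $\lambda := d_{++}$, the identical analysis of $\AHI(S_c^-)$ via composing with $D^+, \Sigma^+$ on the input side yields the stated formulas with the same $\lambda$.

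Finally, to conclude $\lambda = \pm 1$, evaluate the composition $\AHI(S_c^+) \circ \AHI(S_c^-) : V \to V$. From the formulas it sends $\mathbf{v}_+ \mapsto 2\lambda^2 \mathbf{v}_-$ and $\mathbf{v}_- \mapsto 0$. On the other hand, $S_c^+ \circ S_c^-$ is a genus-one surface with two boundary circles in $I \times S^1 \times D$, which after a handle-slide argument (localizing the genus away from the non-trivial topology, in the spirit of the isotopy used for $c_{ij}=0$) is isotopic to the cobordism $I \times U_1 \# T^2$ defining $\sigma$ on $V$; the latter sends $\mathbf{v}_+ \mapsto 2\mathbf{v}_-$ by the proof of Proposition \ref{sigma-action}. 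Matching forces $\lambda^2 = 1$. The hardest step is precisely this final isotopy: a priori the genus-handle in $S_c^+ \circ S_c^-$ wraps around the $S^1$-factor of $S^1 \times D$, and showing it is isotopic rel boundary to a locally attached torus requires care with the relative homology class in $H_2(I \times S^1 \times D, I \times U_1)$.
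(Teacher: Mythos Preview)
Your overall strategy of capping with $D^\pm,\Sigma^\pm$ and using $\sigma=0$ matches the paper for the off-diagonal entries and for $\AHI(S_c^-)(\mathbf v_-)=0$. However, there are two genuine gaps.

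\textbf{The antisymmetry $d_{--}=-d_{++}$.} Your invocation of the involution $J:\alpha\mapsto i^{\deg\alpha}\alpha$ does not work as stated. The generators $\rho_\pm$ are $\mathbb Z/4$-homogeneous elements, \emph{not} $\muu(R)$-eigenvectors; the eigenvectors in $W$ are $\rho_+\pm\hbar\rho_-$. Since $\rho_+$ and $\rho_-$ differ in degree by $2$, the tensors $\rho_+\otimes\rho_+$ and $\rho_-\otimes\rho_-$ have the \emph{same} $\mathbb Z/4$-degree, so $J$ scales them by the same factor and cannot produce the sign discrepancy you claim. The paper instead uses that $\AHI(S_c^+)$ commutes with $\muu(R)$: the vector $(\rho_++\hbar\rho_-)^{\otimes 2}$ lies in the $+2$-eigenspace of $\muu(R)$ on $W\otimes W$, while $V=\AHI(U_1)$ has only eigenvalue $0$, so this vector must map to zero. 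Expanding and using $d_{+-}=d_{-+}=0$ gives $d_{++}+d_{--}=0$.

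\textbf{Matching the two $\lambda$'s and showing $\lambda=\pm1$.} The ``identical analysis'' of $S_c^-$ only produces a sign $\lambda_3$, a priori unrelated to $\lambda$. Your step~6 would indeed force $\lambda\lambda_3=1$, hence $\lambda=\lambda_3=\pm1$, \emph{if} the isotopy you describe existed---but it does not, at least not rel the auxiliary surface $I\times\mathcal K_2$. The closed torus $T=D^+\cup S_c^-\cup S_c^+\cup D^-$ is isotopic to $S^1\times C\subset S^1\times S^2$, whose $S^1$-factor is nontrivial in $\pi_1$; it cannot be isotoped in the complement of $I\times\mathcal K_2$ to a torus lying in a ball. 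The paper circumvents this by first using the torus excision of Section~\ref{re-inst} to pass to $(S^3,H,w)$, where $T$ becomes a torus $T'$ linking $I\times H$; then it uses finger moves (introducing and cancelling intersection points with $I\times H$, following \cite{Kr-ob}) to compare with a local torus $T''$, and finally appeals to $\II^\sharp(T'')(\mathbf u_0')=2\mathbf u_0'$. Separately, the paper obtains $\lambda_1,\lambda_2\in\{\pm1\}$ at the outset by observing that the relevant moduli space for $D^-\cup S_c^+$ is a single flat connection, a direct computation you have not replaced.
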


\begin{proof}
If we glue a disk $D^-$ along $U_1$ to $S_c^+$, then $S_c^+\cup D^-$ becomes a cobordism from $\mathcal{K}_2$ to the empty set.
The link
$\mathcal{K}_2\subset A\times I=S^1\times D$ can be described as $S^1\times \{p_1,p_2\}$ where $p_1,p_2$ are two points in $D$. Let $w$ be an arc in $D$ joining
$p_1$ and $p_2$. Then the cobordism $S_c^+\cup D^-$
can be obtained by pushing $S^1\times w\subset \{-1\}\times S^1\times D$ a little bit into
the 4-dimensional cylinder $[-1,1]\times S^1\times D$.  
Alternatively we can push $w$ a little bit into $[-1,1]\times D$ 
to obtain an arc $\tilde{w}$ and consider
$$S^1\times \tilde{w}\subset S^1\times [-1,1]\times D.$$
The cobordism $S_c^+\cup D^-$ can be identified with $F(S^1\times \tilde{w})$
where
$$
F: S^1\times [-1,1]\times D {\cong} [-1,1]\times S^1\times D
$$
is the obvious diffeomorphism. Notice that
\begin{equation}\label{eq_pi1_Sc}
\pi_1([-1,1]\times S^1\times D- S_c^+\cup D^- )\cong 
\pi_1( S^1\times ([-1,1]\times D-\tilde{w} ))\cong \mathbb{Z}\times \mathbb{Z} 
\end{equation}
whose two generators are $[S^1\times \{\pt\}]$ and the meridian of $\tilde{w}$.
Since $S_c^+\cup D^-$ is an oriented cobordism,
we may assume the orientation of $S^1\times \{p_1\}$ coincides with the orientation of $S^1$ and $S^1\times \{p_2\}$ has the opposite
orientation. The index formula shows that the 0-dimensional moduli space
\begin{equation*}
  M(I\times S^1\times S^2, S_c^+\cup D^-\sqcup I\times  \mathcal{K}_2,I\times u;\rho_{+}\otimes \rho_+, \mathbf{u}_0)
\end{equation*}
consists of only flat connections. By \eqref{eq_pi1_Sc}, we see that 
there is a unique flat connection in the moduli space: 
 the flat connection whose holonomy around $S_c^+\cup D^-$ is equal to the holonomy 
 of $\rho_+$ around 
$\mathcal{K}_1$. 
Similarly, The moduli space
\begin{equation*}
  M(I\times S^1\times S^2, S_c^+\cup D^-\sqcup I\times  \mathcal{K}_2,I\times u;\rho_{-}\otimes \rho_-, \mathbf{u}_0)
\end{equation*}
also consists of a single point.
Therefore we have
\begin{eqnarray*}
  \AHI(S_c^+\cup D^-)(\rho_+,\rho_+) &=& \lambda_1 \mathbf{u}_0 \\
   \AHI(S_c^+\cup D^-)(\rho_-,\rho_-) &=& \lambda_2\mathbf{u}_0
\end{eqnarray*}
where $\lambda_1$ and $\lambda_2$ are $\pm 1$. The two numbers 
$\lambda_1$ and $\lambda_2$ are possibly different. By Proposition \ref{unknot-gen} and
the functoriality of $\AHI$, we have
\begin{eqnarray*}
  \AHI(S_c^+)(\rho_+,\rho_+) &=& \lambda_1 \mathbf{v}_- \\
   \AHI(S_c^+)(\rho_-,\rho_-) &=& \lambda_2\mathbf{v}_-
\end{eqnarray*}

For degree reason, we have
\begin{equation*}
   \AHI(S_c^+)(\rho_+,\rho_-) = c \mathbf{v}_+
\end{equation*}
where $c$ is a constant. If we glue a punctured torus $\Sigma^-$ along $U_1$ to $S_c^+$, we obtain
\begin{equation*}
   \AHI(S_c^+\cup \Sigma^-)(\rho_+,\rho_-)=c\mathbf{u}_0
\end{equation*}
by Proposition \ref{unknot-gen} as before. On the other hand, $S_c^+\cup \Sigma^-$ is the same as the connected sum of $S_c^+\cup D^-$ and a torus, so we have
\begin{equation*}
  \AHI(S_c^+\cup \Sigma^-)(\rho_+,\rho_-)=\AHI(S_c^+\cup D^-)(\sigma(\rho_+),\rho_-)=0
\end{equation*}
by Proposition \ref{sigma-action}. Hence $c=0$ and
\begin{equation*}
   \AHI(S_c^+)(\rho_+,\rho_-) = 0
\end{equation*}
By the same argument, we have
$$\AHI(S_c^+)(\rho_-,\rho_+) = 0$$

Let $R$ be a $S^2$ slice in $S^1\times S^2$. The operator $\muu(R)$ is of degree $2$ on $W=\AHI(\mathcal{K}_1)$ with eigenvalues $\pm 1$. Because of the
$\mathbb{Z}/4$-grading, we must have
\begin{equation*}
  \muu(R)(\rho_+)=\hbar \rho_-
\end{equation*}
where $\hbar =\pm 1$. Then $\rho_+ + \hbar \rho_-$ is an eigenvector with eigenvalue $1$. And
$$(\rho_+ + \hbar \rho_-)\otimes (\rho_+ + \hbar \rho_-)\in W\otimes W$$
is an eigenvector for $\muu(R)$ with eigenvalue $2$. The map $\AHI(S_c^+)$ respects the $\muu(R)$-action. Since
$\AHI(U_1)$ only has zero eigenvalues under the $\muu(R)$-action, we must have
\begin{eqnarray*}
  0 &=& \AHI(S_c^+)(\rho_+ + \hbar \rho_-,\rho_+ + \hbar \rho_-) \\
    &=& \AHI(S_c^+)(\rho_+,\rho_+)+  \AHI(S_c^+)(\rho_-,\rho_-)        \\
    &=& (\lambda_1+\lambda_2)\mathbf{v}_-
\end{eqnarray*}
Hence $\lambda:=\lambda_1=-\lambda_2$.

The cobordism $S_c^-$ is just an orientation-reversed copy of $S_c^+$. A dual argument shows
\begin{eqnarray*}
  \AHI(S_c^-)(\mathbf{v}_+) &=& \lambda_3 (\rho_+\otimes \rho_+ -\rho_-\otimes \rho_-)\\
   \AHI(S_c^-)(\mathbf{v}_-) &=& 0
\end{eqnarray*}
where $\lambda_3=\pm 1$.

Now using previous calculations and the functoriality of $\AHI$,  we have 
\begin{equation*}
\AHI(D^+\cup S_c^-\cup S_c^+\cup D^-)(\mathbf{u}_0)=2\lambda \lambda_3 \mathbf{u}_0
\end{equation*}
We use $T$ to denote the surface $D^+\cup S_c^-\cup S_c^+\cup D^-$.
In order to show $\lambda_3=\lambda$, it suffices to prove $\AHI(T)(\mathbf{u}_0)=2\mathbf{u}_0$.  

Suppose
$\AHI(T)(\mathbf{u}_0)=a\mathbf{u}_0$.
The surface $T$ is a torus embedded in 
$[-1,1]\times S^1\times S^2$. Pick a small circle $C$ on $S^2$, then $T$ is isotopic to
$\{0\}\times S^1\times C$. Next we want to calculate $a$. 
We want to reduce the calculation to \cite[Lemma 8.8]{KM:Kh-unknot}
which says the following:
Suppose $S$ is an oriented torus embedded in $[-1,1]\times S^3$ viewed as a cobordism
from the empty link $U_0$ to itself, then 
$$
\II^\sharp(S):\II^\sharp(U_0)\to \II(U_0)
$$  
is a multiplication by $2$.

The excision in Section \ref{re-inst} changes $(S^1\times S^2,\mathcal{K}_2,u)$ 
into $(S^3, H,\omega)$ where $H$ is a Hopf link and $\omega$ is an arc 
joining the two components of $H$. Using this
excision, we have
\begin{equation*}
\II(I\times S^3,I\times H \cup T',I\times \omega)(\mathbf{u}_0')=a \mathbf{u}'_0 
\end{equation*}
where $\mathbf{u}'_0$ is a generator of $\II(S^3,H,w)=\II^\sharp(U_0)$ 
and $T'\subset \{0\}\times S^3$ is 
the boundary torus of the neighborhood of a meridian around
a component of $\{0\}\times H$. 
Closing up the two ends of $S^3$, we obtain a closed triple
$$
(S^1\times S^3, S^1\times H\cup T', S^1\times \omega).
$$
According to \cite[Proposition 5.5]{KM:Kh-unknot} (and the remark below it), 
$$
\II(S^1\times S^3, S^1\times H\cup T', S^1\times \omega)=2a.
$$
By \cite[Theorem 1.1]{Kr-ob}, the singular Donaldson invariants depend only on 
the homotopy class of the embedded surface. Even though 
our situation is not exactly the same  as the situation in \cite{Kr-ob} 
where the embedded surface is connected and the orbifold 
bundle extends,  the argument still works in our case.  
We can use a homotopy to move $T'$ so that it becomes a torus $T''$ 
in $\{0\}\times S^3$ that is contained in a ball disjoint from
$S^1\times H$. 
Then we have
$$
\II(S^1\times S^3, S^1\times H\cup T'', S^1\times \omega)=2a.
$$
Cutting $S^1\times S^3$ along a $S^3$-slice and using 
\cite[Proposition 5.5]{KM:Kh-unknot} again we obtain
$$
a\mathbf{u}_0'=\II(I\times S^3,I\times H \cup T'',I\times \omega)(\mathbf{u}_0')=
\II^\sharp(T'')(\mathbf{u}_0')
$$
By \cite{KM:Kh-unknot}*{Lemma 8.8}, we have $\II^\sharp(T'')(\mathbf{u}'_0)=2\mathbf{u}'_0$.
Therefore $a=2$.
\end{proof}

Now we move to complex coefficients. Denote $V\otimes_\mathbb{Z} \mathbb{C}$ and $W\otimes_\mathbb{Z} \mathbb{C}$ by $V_c$ and $W_c$ respectively.
Define
\begin{equation}\label{w+w-}
 \mathbf{w}_+:=\frac{1}{\sqrt{2\lambda}}(\rho_+ +\hbar \rho_-),~\mathbf{w}_-:=\frac{1}{\sqrt{2\lambda}}(\rho_+ -\hbar \rho_-)
\end{equation}
where $\hbar$ is defined by
\begin{equation*}
\muu(R)(\rho_+)=\hbar\rho_-
\end{equation*}
The two elements
$\mathbf{w}_+, \mathbf{w}_-$ form a basis for $W_c$. 
Notice that this basis is not homogeneous under the $\mathbb{Z}/4$-grading so that the $\mathbb{Z}/4$-grading
descends to a $\mathbb{Z}/2$-grading.
Using this basis, we summarize Proposition \ref{Sb} and Proposition \ref{Sc} in the following.
\begin{PR}\label{vwTQFT}
In terms of the generators $\mathbf{v}_+,\mathbf{v}_-$ for $V_c$ and $\mathbf{w}_+, \mathbf{w}_+$ for $W_c$, the maps induced by  cobordisms
$S_b^+$ and $S_b^-$ are given by
\begin{eqnarray*}
  \mathbf{v}_+\otimes \mathbf{w}_+ &\mapsto& \mathbf{w}_+ \\
  \mathbf{v}_+\otimes \mathbf{w}_- &\mapsto& \mathbf{w}_- \\
  \mathbf{v}_-\otimes \mathbf{w}_+ &\mapsto& 0\\
  \mathbf{v}_-\otimes \mathbf{w}_- &\mapsto& 0\\
\end{eqnarray*}
and
\begin{eqnarray*}
   \mathbf{w}_+ &\mapsto& \mathbf{v}_-\otimes \mathbf{w}_+ \\
   \mathbf{w}_- &\mapsto& \mathbf{v}_-\otimes \mathbf{w}_-
\end{eqnarray*}
The maps induced by cobordisms $S_c^+$ and $S_c^-$ are given by
\begin{eqnarray*}
  \mathbf{w}_+\otimes \mathbf{w}_+ &\mapsto& 0 \\
  \mathbf{w}_-\otimes \mathbf{w}_- &\mapsto& 0\\
  \mathbf{w}_+\otimes \mathbf{w}_- &\mapsto& \mathbf{v}_-\\
  \mathbf{w}_-\otimes \mathbf{w}_+ &\mapsto& \mathbf{v}_-\\
\end{eqnarray*}
and
\begin{eqnarray*}
  \mathbf{v}_+ &\mapsto& \mathbf{w}_+\otimes \mathbf{w}_- + \mathbf{w}_-\otimes \mathbf{w}_+ \\
  \mathbf{v}_- &\mapsto& 0
\end{eqnarray*}
\end{PR}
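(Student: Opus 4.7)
The plan is to derive Proposition \ref{vwTQFT} as a pure change-of-basis exercise from Propositions \ref{Sb} and \ref{Sc}, working over $\mathbb{C}$. The formulas for $\AHI(S_b^\pm)$ are essentially immediate. Proposition \ref{Sb} gives $\AHI(S_b^+)(\mathbf{v}_+, y) = y$, $\AHI(S_b^+)(\mathbf{v}_-, y) = 0$, and $\AHI(S_b^-)(y) = \mathbf{v}_- \otimes y$ for \emph{every} $y \in W$. Since $\mathbf{w}_+$ and $\mathbf{w}_-$ are $\mathbb{C}$-linear combinations of $\rho_+, \rho_-$ by \eqref{w+w-}, specializing $y = \mathbf{w}_\pm$ immediately yields the four $S_b^+$ formulas and the two $S_b^-$ formulas of the proposition.

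The substantive content is the $S_c^\pm$ part, which I will handle by inverting \eqref{w+w-} and expanding. Since $\hbar = \pm 1$, one has $\rho_+ = \tfrac{\sqrt{2\lambda}}{2}(\mathbf{w}_+ + \mathbf{w}_-)$ and $\rho_- = \tfrac{\hbar\sqrt{2\lambda}}{2}(\mathbf{w}_+ - \mathbf{w}_-)$. For $S_c^+$, I will write out $\mathbf{w}_{\epsilon} \otimes \mathbf{w}_{\epsilon'} = \tfrac{1}{2\lambda}(\rho_+ + \epsilon\hbar \rho_-)\otimes(\rho_+ + \epsilon'\hbar \rho_-)$ and apply Proposition \ref{Sc} termwise: the mixed terms $\rho_+\otimes\rho_-$ and $\rho_-\otimes\rho_+$ vanish, and the two surviving terms contribute $\tfrac{1}{2\lambda}(\lambda + \epsilon\epsilon'\hbar^2(-\lambda))\mathbf{v}_-$. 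Using $\hbar^2 = 1$, this evaluates to $0$ when $\epsilon = \epsilon'$ and to $\mathbf{v}_-$ when $\epsilon \neq \epsilon'$, exactly as claimed. The dual calculation for $S_c^-$ proceeds by expanding $\lambda(\rho_+\otimes\rho_+ - \rho_-\otimes\rho_-)$ in the $\mathbf{w}_\pm$-basis: the diagonal contributions $\mathbf{w}_+\otimes\mathbf{w}_+$ and $\mathbf{w}_-\otimes\mathbf{w}_-$ cancel because $\hbar^2 = 1$, and the remaining off-diagonal terms combine with the prefactor $\lambda^2 = 1$ to give $\AHI(S_c^-)(\mathbf{v}_+) = \mathbf{w}_+\otimes\mathbf{w}_- + \mathbf{w}_-\otimes\mathbf{w}_+$, while $\AHI(S_c^-)(\mathbf{v}_-) = 0$ is unaffected by the basis change.

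There is no real obstacle here; the difficulty was absorbed entirely into the proof of Propositions \ref{Sb} and \ref{Sc}, where one had to identify the constants $\lambda$ and $\hbar$ and establish the vanishing of the $\rho_+\otimes\rho_-$ components. The normalization $\tfrac{1}{\sqrt{2\lambda}}$ in \eqref{w+w-} was engineered precisely so that the factors of $\lambda$ and $\hbar$ drop out and what remains is the standard Frobenius multiplication on $W_c$ appearing in the annular Khovanov TQFT. The only bookkeeping point worth flagging is that $\sqrt{2\lambda}$ is genuinely complex when $\lambda = -1$, which is why the statement must be formulated over $\mathbb{C}$; the $\mathbb{Z}/4$-grading accordingly collapses to a $\mathbb{Z}/2$-grading on $W_c$ since $\mathbf{w}_\pm$ are not homogeneous under the original grading.
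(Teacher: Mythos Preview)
Your proposal is correct and takes essentially the same approach as the paper: the paper states Proposition \ref{vwTQFT} as a direct summary of Propositions \ref{Sb} and \ref{Sc} after passing to the basis $\mathbf{w}_\pm$ defined in \eqref{w+w-}, without writing out the basis-change computation explicitly. Your write-up simply makes that computation explicit, and the algebra checks out.
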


\begin{PR}\label{homotopy-dep}
Let $S, S':L_1\to L_2$ be oriented cobordisms between links $L_1,L_2\subset A\times I$. If $L_i=U_{m_i}\sqcup \mathcal{K}_{l_i}$ ($i=1,2$)
and $S, S'$ are \emph{homotopic} relative to the boundary, then $\AHI(S)=\AHI(S')$.
\end{PR}
\begin{proof}
We need to use the local system introduced in \cite{KM:YAFT}. Let $\mathcal{R}$ be the ring 
$\mathbb{Z}[t,t^{-1}]$ and $\mathcal{B}$ be the configuration space of orbifold connections on 
$$(S^1\times S^2,L_i\sqcup \mathcal{K}_2,u)$$
Given any continuous function 
$$
s:\mathcal{B}\to S^1
$$
a local system of free rank-1 $\mathcal{R}$-modules on $\mathcal{B}$ is defined in  \cite{KM:YAFT}*{Section 3.9}.
Given any orbifold connection $[A]\in \mathcal{B}$, its holonomy along any component of $L_i$ lies in a $S^1$-subgroup of $SU(2)$.
In particular, we can take $s$ to be the product of holonomies along all the components of $L_i$. In this way we obtain a 
local system $\Gamma$ and the corresponding Floer homology group
$$
\AHI(L_i;\Gamma)
$$
which is the homology of a chain complex of free $\mathcal{R}$-modules generated by the  critical points of the Chern-Simons functional. 
The Floer homology $\AHI(\mathcal{K}_1;\Gamma)$ is a free $\mathcal{R}$-module of rank 2 since  
the critical points of the corresponding Chern-Simons functional consists of two points differing by an even degree. 
Similarly we have $\AHI(U_1;\Gamma)$ is a free $\mathcal{R}$-module of rank 2. 
Theorem \ref{L1+L2} still holds for
Floer homology with local coefficients, so we have $\AHI(L_i;\Gamma)$ ($i=1,2$) is a free $\mathcal{R}$-module. 

We can view $\mathbb{Z}$ as the $\mathcal{R}$-module $\mathcal{R}/(t-1)$. In order to show that
$$
\AHI(S;\mathbb{Z})=\AHI(S';\mathbb{Z})
$$
it suffices to show 
\begin{equation}\label{AHI=Gamma}
\AHI(S;\Gamma)=\AHI(S';\Gamma)
\end{equation}
by the universal coefficient theorem. Since $S$ and $S'$ are homotopic, we have
\begin{equation}\label{AHI=Gamma=t}
(t^{-1}-t)^m\AHI(S;\Gamma)=(t^{-1}-t)^m \AHI(S';\Gamma)
\end{equation}
for some $m\in \mathbb{N}$ by \cite{KM:YAFT}*{Proposition 5.2}. 

Now \eqref{AHI=Gamma} follows from \eqref{AHI=Gamma=t} since $\AHI(L_2;\Gamma)$ is a free (hence torsion-free)  $\mathcal{R}$-module.

\end{proof}

Now we are ready to prove our main result:
\begin{THE}\label{AKh-AHI}
Let $L\subset A\times I$ be a link and $\overline{L}$ be its mirror image.
For each $i\in \mathbb{Z}$, there is a spectral sequence whose $E_2$-page is the annular Khovanov homology
$\AKh(\overline{L},i;\mathbb{C})$ and which converges to  the annular instanton Floer homology $\AHI(L,i)$.
\end{THE}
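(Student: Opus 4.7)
The plan is to identify the $E_1$-page of the spectral sequence from Proposition \ref{AHI-ss} with the annular Khovanov chain complex of $\overline{L}$ tensored with $\mathbb{C}$, and then read off the $E_2$-page.

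First, for each vertex $v\in \{0,1\}^c$, the resolution $L_v$ is a disjoint union of $a_v$ null-homologous (trivial) circles and $b_v$ homologically non-trivial circles in the annulus $A$. By Proposition \ref{VW} (together with the naturality clause (iv) under split cobordisms), there is a preferred identification
\begin{equation*}
  \AHI(L_v;\mathbb{C})\;\cong\; V_c^{\otimes a_v}\otimes W_c^{\otimes b_v},
\end{equation*}
where $V_c=\AHI(U_1;\mathbb{C})$ has basis $\mathbf{v}_+,\mathbf{v}_-$ and $W_c=\AHI(\mathcal{K}_1;\mathbb{C})$ has basis $\mathbf{w}_+,\mathbf{w}_-$ defined in \eqref{w+w-}.

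Second, I would match the third grading. The operator $\muu(R)$ acts by $0$ on $V_c$ (since its eigenvalues are bounded by $\pm(m-2)=0$ for a three-ball component) and by $\mathrm{diag}(1,-1)$ on $W_c$ in the basis $\mathbf{w}_\pm$; tensor naturality of the $\muu(R)$-action under split cobordisms (Proposition \ref{link-sum}) shows that on $V_c^{\otimes a_v}\otimes W_c^{\otimes b_v}$ the $\muu(R)$-eigenvalue of a pure tensor equals the sum of the exponents of the $\mathbf{w}_\pm$-factors. Hence the generalized eigenspace $\AHI(L_v,i;\mathbb{C})$ corresponds precisely to the degree-$i$ summand of $CKh_v(L)$ in the third grading of Section \ref{AKh}.

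Third, I would match the differential. By Proposition \ref{AHI-ss}, $d_1$ is the signed sum of $\AHI(S_{vu})$ for edges $v-u=e_i$. Each $S_{vu}$ is a product cobordism outside one ``pair of pants'', which merges or splits circles of one of the three types (a), (b), (c) identified in Section \ref{SS}. For type (a) the relevant map on $V^{\otimes 2}\leftrightarrow V$ was computed in Proposition \ref{unknot-gen}; for types (b) and (c) the maps on $V\otimes W\leftrightarrow W$ and $W^{\otimes 2}\leftrightarrow V$ were computed in Proposition \ref{vwTQFT}. Comparing these formulas against the Khovanov merge/split rules written out in Section \ref{AKh}, one sees they agree on the nose in the bases $\mathbf{v}_\pm,\mathbf{w}_\pm$. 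It then remains to match the sign $(-1)^{\eta(v,u)}$ in \eqref{d1} with the Khovanov sign $(-1)^{\sum_{i<j\le c}v_j}$, which is immediate from the definition of $\eta$. The appearance of the mirror $\overline{L}$ (rather than $L$) on the $E_2$-page follows from the fact that Kronheimer--Mrowka's cube orients the smoothings opposite to Khovanov's convention, exactly as in the closed-link case of \cite{KM:Kh-unknot}.

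Together these three steps give an isomorphism of chain complexes $(E_1,d_1)\cong (CKh(\overline{L}),D)\otimes\mathbb{C}$ that is graded by the $\muu(R)$-eigenvalue on the left and by the third Khovanov grading on the right, and the spectral sequence of Proposition \ref{AHI-ss} restricted to each eigenvalue $i$ then has $E_2\cong \AKh(\overline{L},i;\mathbb{C})$ converging to $\AHI(L,i)$. The main obstacle I foresee is the bookkeeping in Step~3, namely verifying the sign comparison simultaneously for all three cobordism types and for edges that involve circles far from the saddle (handled via the split-naturality clause of Proposition \ref{VW}); by contrast the algebraic TQFT-matching from Proposition \ref{vwTQFT} is essentially already done.
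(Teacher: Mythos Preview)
Your approach is essentially the same as the paper's, and Steps~1 and~2 are correct as written. The one genuine gap is in Step~3: the sign comparison is \emph{not} immediate from the definition of $\eta$. With $v-u=e_i$ one has $\eta(v,u)=\sum_{j=i}^c v_j$, whereas the Khovanov sign on the corresponding edge of $CKh(\overline{L})$ (identifying the vertex $v$ of $L$ with $\bar v=(1,\dots,1)-v$ of $\overline{L}$) is $(-1)^{\sum_{i<j\le c}\bar v_j}$. A direct computation gives a discrepancy of $(-1)^{c+i+1}$, which depends on the crossing index $i$ and so cannot be absorbed by a single global sign. The paper fixes this by the diagonal change of basis that multiplies $\AHI(L_v)$ by $(-1)^{c+1+\sum_k v_{2k+1}}$, which converts the $E_1$-differential exactly into the Khovanov differential on $CKh(\overline{L};\mathbb{C})$. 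You should either carry out this explicit gauge, or invoke the standard fact that any two edge-sign assignments making every square of a commutative cube anticommute yield isomorphic total complexes; in either case, the claim as you stated it (``immediate from the definition of $\eta$'') needs to be replaced.
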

\begin{proof} 
Notice that we have a grading-preserving identification of vector spaces 
$$
CKh_{\bar{v}}(\overline{L};\mathbb{C})=CKh_{v}(L;\mathbb{C})=\AHI(L_v;\mathbb{C})
$$ 
where $\bar{v}=(1,\cdots,1)-v$.
We want to compare the differential on the $E_1$-page of the spectral sequence in Proposition \ref{AHI-ss}
and the differential used to define $\AKh(\overline{L};\mathbb{C})$.
We already know the cobordism $S_{vu}$ is the union of a pair of pants cobordism and a product cobordism. Using Proposition \ref{homotopy-dep},
we may also assume $S_{vu}$ splits: the pair of pants part and the product part are included in disjoint subspace
$S^1\times D_1$ and $S^1\times D_2$ of $S^1\times D$. Since the isomorphism \eqref{link-sum} is natural with respect to split cobordisms,
so $\AHI(S_{vu})$ can be calculated using Proposition \ref{unknot-gen} and Proposition \ref{vwTQFT}.
Therefore we know the cube for the $E_1$-page of
the spectral sequence is almost the same as the cube used to define 
$\AKh(\overline{L};\mathbb{C})$ except the sign correction term in the differential. 
The differential on the $E_1$-page is 
\begin{equation*}
\sum_i\sum_{v-u=e_i}(-1)^{\sum_{i\le j\le c} v_j} \AHI(S_{vu})
\end{equation*}  
The differential on $CKh(\overline{L})$ is
\begin{equation*}
\sum_i\sum_{v-u=e_i}(-1)^{\sum_{i< j\le c} \bar{v}_j} \AHI(S_{vu})= 
\sum_i\sum_{v-u=e_i}(-1)^{c+i+1+\sum_{i\le j\le c} v_j} \AHI(S_{vu})
\end{equation*} 
where $c$ is the number of crossings of the diagram of $L$.
We have an (anti)isomorphism 
\begin{equation*}
E_1\to CKh(\overline{L})
\end{equation*} 
which is equal to $(-1)^{\sum_k v_{2k+1}}\idd$ on $\AHI(L_v)$. Therefore the $E_2$-page is isomorphic to $\AKh(\overline{L};\mathbb{C})$.
Taking the generalized eigenspaces of $\muu(R)$ on the $E_1$-page, we obtain the desired result.   
\end{proof}

\section{Applications of the Spectral Sequence}\label{app-ss}
 Recall that given a balanced admissible tangle $T\subset D\times I$, we can define a closure $\hat{T}$ 
 which is a link in $S^1\times D = A\times I$.
Assume $|T\cap D^{+}|=m$, we have
\begin{equation*}
  \TKh(T)=\AKh(\hat{T},m)
\end{equation*}
Using Isomorphism  \eqref{THI=AHI} and Theorem \ref{AKh-AHI}, we have
\begin{THE}\label{TKh-THI}
Let $T\subset D\times I$ be a balanced admissible tangle and $\overline{T}$ be its mirror image.
There is a spectral sequence whose $E_2$-page is the tangle Khovanov homology
$\TKh(\overline{T};\mathbb{C})$ and which converges to  the tangle instanton Floer homology $\THI(T)$.
\end{THE}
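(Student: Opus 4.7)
The plan is to deduce Theorem \ref{TKh-THI} as a direct corollary of Theorem \ref{AKh-AHI} by taking $L$ to be the closure $\hat{T}$ and specializing to the degree $i=m$ summand, where $m=|T\cap D^+|$. Since Theorem \ref{AKh-AHI} already produces, for each integer $i$, a spectral sequence with $E_2$-page $\AKh(\overline{L},i;\mathbb{C})$ converging to $\AHI(L,i)$, essentially all the work is in identifying both the $E_2$-page and the limit with the tangle invariants of $T$ and $\overline{T}$.

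First I would apply Theorem \ref{AKh-AHI} to $L=\hat{T}\subset S^1\times D$ and fix the grading index $i=m$. This yields a spectral sequence from $\AKh(\overline{\hat{T}},m;\mathbb{C})$ to $\AHI(\hat{T},m)$. Next I would observe that mirroring commutes with the closure operation: taking the mirror image of $T$ inside $I\times D$ and then closing up produces the same annular link as closing up $T$ and then taking the mirror image in $S^1\times D$, i.e.\ $\overline{\hat{T}}=\widehat{\overline{T}}$. This is because the closure operation only identifies boundary disks and is insensitive to the ambient reflection that defines the mirror image, and the number of top endpoints is preserved under mirroring so the grading index $m$ is the same for both $T$ and $\overline{T}$.

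With this identification in hand, the isomorphism $\TKh(\overline{T};\mathbb{C})\cong \AKh(\widehat{\overline{T}},m;\mathbb{C})$ from equation \eqref{THI=AHI} of Section \ref{AKh} identifies the $E_2$-page with $\TKh(\overline{T};\mathbb{C})$, while the Floer-side identification $\THI(T)\cong \AHI(\hat{T},m)$, also from \eqref{THI=AHI}, identifies the limit with $\THI(T)$. Combining these two identifications with the spectral sequence produced by Theorem \ref{AKh-AHI} gives the desired result.

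I do not expect any real obstacle here; the theorem is genuinely a corollary. The only point requiring a moment's care is the compatibility of mirroring with closure and with the grading $m$, but this is straightforward from the definitions in Section \ref{AKh} and Definition \ref{tangle-def}.
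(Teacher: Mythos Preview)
Your proposal is correct and matches the paper's approach exactly: the paper also derives this theorem as an immediate consequence of Theorem \ref{AKh-AHI} applied to $L=\hat{T}$ at degree $i=m$, together with the identifications $\TKh(T)\cong\AKh(\hat{T},m)$ and $\THI(T)\cong\AHI(\hat{T},m)$.
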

\begin{COR}
Let $T\subset D\times I$ be a balanced admissible tangle. Then 
$\TKh(T,\mathbb{C})\cong\mathbb{C}$ if and only if $T$ is isotopic to a braid.
\end{COR}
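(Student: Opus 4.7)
The plan combines Theorem \ref{TKh-THI} with the strengthened form of Street's braid-detection for $\THI$ (i.e., the corollary just before this one, which removes the verticality hypothesis from Corollary \ref{braid-detection}).

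For the direction ``$T$ is a braid $\Rightarrow \TKh(T;\mathbb{C})=\mathbb{C}$'', I would compute the top third-grading of the annular Khovanov cube directly. Given an $m$-braid $\beta$ with some diagram, the oriented (all-$0$) resolution is the trivial $m$-braid whose annular closure is $\mathcal{K}_m$; it contributes a single generator $\mathbf{w}_+^{\otimes m}$ in the top third grading $m$. Every other resolution contains at least one cup-cap, which merges a pair of top-to-bottom strands into a single annular loop wrapping more than once, strictly reducing the number of annularly non-contractible circles in the closure. Such resolutions therefore vanish in third grading $m$, so the top-graded chain group is one-dimensional. The differential out of $\mathbf{w}_+^{\otimes m}$ also vanishes, since every such differential is a merge of two non-trivial circles (case (c) of Proposition \ref{vwTQFT}) and sends $\mathbf{w}_+\otimes \mathbf{w}_+ \mapsto 0$. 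Hence $\TKh(\beta;\mathbb{C})=\mathbb{C}$.

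For the converse, suppose $\TKh(T;\mathbb{C})=\mathbb{C}$. I would first establish that $T$ is vertical by an unoriented skein-triangle parity argument on $\AKh$, formally parallel to the proof of Proposition \ref{odd-AHI}: if $T$ had a closed component, then after finitely many crossing changes, each preserving the parity of the top-graded dimension, $T$ would split off an unknot $U_1$, forcing $\TKh(T;\mathbb{C})\cong V\otimes \AKh(\text{rest}, m;\mathbb{C})$ to be even-dimensional, contradicting the hypothesis. Hence $T$, and consequently $\overline{T}$, is vertical. Now apply Theorem \ref{TKh-THI} with $\overline{T}$ in place of $T$: the spectral sequence has $E_2$-page $\TKh(T;\mathbb{C})=\mathbb{C}$ and converges to $\THI(\overline{T})$, giving $\dim_{\mathbb{C}}\THI(\overline{T})\le 1$. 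Since $\overline{T}$ is vertical, the proposition asserting ``$T$ is vertical iff $\THI(T)$ is odd-dimensional'' forces $\THI(\overline{T})\cong \mathbb{C}$. Street's braid-detection then implies that $\overline{T}$, and therefore $T$, is isotopic to a braid.

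The main obstacle is the combinatorial claim in the forward direction: that every non-oriented resolution of a braid strictly decreases the count of annularly non-contractible circles in the closure. This requires careful bookkeeping of how inserting a cup-cap into the identity $m$-braid re-pairs the $2m$ boundary endpoints under the annular closure. The converse direction is more conceptual: once verticality is established, the spectral sequence together with the parity statement collapses the problem onto the strengthened Street detection theorem.
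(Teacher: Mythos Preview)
Your forward direction matches the paper's one-line remark, though your geometric description is off: an embedded circle in the annulus wraps zero or one times, never more. The correct reason every non-identity resolution fails to contribute in grading $m$ is that a single non-braid-like smoothing produces a local cup and cap, between which a disk slice meets the closure in only $m-2$ points, bounding the number of non-trivial circles by $m-2$.

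For the converse, your route works in outline but is circuitous compared to the paper's. The paper observes simply that passing from one page of a spectral sequence to the next preserves the parity of the total dimension; since the $E_2$-page $\TKh(T;\mathbb{C})$ has dimension $1$, the $E_\infty$-page has odd dimension, and combined with $\dim\THI(\overline{T})\le 1$ this gives $\THI(\overline{T})\cong\mathbb{C}$ immediately. Then the strengthened Street detection (the corollary you cite in your opening paragraph) applies. This avoids your detour through establishing verticality on the $\AKh$ side via a skein-triangle parity argument---a fact the paper never sets up---and it sidesteps the gap in your sketch where you rule out closed components of $T$ but not strands with both endpoints on a single $D^\pm$.
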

\begin{proof}
The ``if'' part is simple: in all the possible resolutions, only the trivial product tangle contributes to $\TKh(T;\mathbb{C})$. 
We focus on the ``only if'' part. Suppose $\TKh(T;\mathbb{C})=\mathbb{C}$, then by Theorem \ref{TKh-THI} we have
$$
\dim \THI(\overline{T}) \le 1
$$
On the other hand, the parity of the total dimension of each page in the spectral sequence does not change after taking homology, so we know
$\dim \THI(\overline{T})$ is odd. Therefore we have $\THI(\overline{T})\cong \mathbb{C}$. By Corollary \ref{braid-detection}, we conclude 
$\overline{T}$ and $T$ are isotopic to braids.
\end{proof}

Given a knot $K\subset S^3$, we can form a tangle $T(K)$ by removing a neighborhood of a point on $K$.
The tangle $T(K)$ is vertical, balanced and admissible.
Let $T^n(K)$ be the $n$-cable of $T(K)$. We have
\begin{equation*}
  \Khr_n(K):=\TKh(T^n(K))
\end{equation*}
where $\Khr_n(K)$ is the reduced $n$-colored Khovanov homology of $K$. 
\begin{THE}
Let $K$ be a knot in $S^3$ and $\overline{K}$ be its mirror image.
There is a spectral sequence whose $E_2$-page is the reduced $n$-colored Khovanov homology
$\Khr_n(\overline{K};\mathbb{C})$ and which converges to  the reduced singular instanton Floer homology $\II^\natural(K;\mathbb{C})$.         
\end{THE}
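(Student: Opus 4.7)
The plan is to derive Theorem \ref{colored-ss} as a direct consequence of Corollary \ref{TKh-THI*} applied to the $n$-cable tangle $T^n(K)\subset I\times D$ of the $1$-strand tangle $T(K)$ obtained from $K$ by removing a neighborhood of a basepoint.

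Since $\overline{T^n(K)}=T^n(\overline{K})$ and, by the definition stated in the excerpt, $\Khr_n(\overline{K})=\TKh(T^n(\overline{K}))$, Corollary \ref{TKh-THI*} applied to $T=T^n(K)$ immediately yields a spectral sequence whose $E_2$-page is $\Khr_n(\overline{K};\mathbb{C})$ and which converges to $\THI(T^n(K))$. The theorem therefore reduces to the key identification
\begin{equation*}
\THI(T^n(K))\ \cong\ \II^\natural(K;\mathbb{C}).
\end{equation*}
For $n=1$ this is precisely the content of Proposition \ref{A-T-I}.

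For general $n\ge 2$, I would use \eqref{THI=AHI} to rewrite the left hand side as the top generalized $\muu(R)$-eigenspace
\begin{equation*}
\THI(T^n(K))\ =\ \AHI(\hat{T^n(K)},n;\mathbb{C}),
\end{equation*}
where $\hat{T^n(K)}\subset S^1\times D$ is the $n$-cable of $\hat K$, and proceed by induction on $n$. The inductive step would apply the torus excision of Theorem \ref{Texcision} to the disjoint union
\begin{equation*}
(S^1\times S^2,\hat{T^n(K)}\sqcup \mathcal{K}_2,u)\ \sqcup\ (S^1\times S^2,\mathcal{K}_m,u'),
\end{equation*}
excising along the boundary torus of a tubular neighborhood of the outermost cable strand and along the boundary torus of a tubular neighborhood of a suitable component of $\mathcal{K}_m$ in the model piece, arranged so that both tori have odd pairing with the relevant arcs. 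After passing to the top generalized eigenspace of $\muu(R)$, which is one-dimensional on the $\mathcal{K}_m$-factor by the calculations at the end of Section \ref{tangle}, this reduces the top eigenspace for the $n$-cable to the top eigenspace for a smaller cable. Iterating down to the base case $n=1$ and invoking Proposition \ref{A-T-I} yields the claimed identification.

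The main obstacle is the bookkeeping in the inductive step: verifying that the chosen excising tori satisfy all the hypotheses of Theorem \ref{Texcision} (disjointness from the links, odd pairing with $u$), and tracking the eigenvalue decomposition of $\muu(R)$ under the excision isomorphism so that the top eigenspace on the $n$-cable side genuinely matches the top eigenspace on the $(n-1)$-cable side (possibly with a parity-dependent choice of $m$). This follows the template of the iterated excision computation of $\II(S^1\times S^2,\mathcal{K}_m,u)$ at the end of Section \ref{tangle} and the analogous cabling arguments in the proof of Theorem \ref{odd-braid-detection}, where exactly these tensor-product decompositions of $\muu(R)$ under excision are carried out explicitly.
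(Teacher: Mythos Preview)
Your reduction to proving $\THI(T^n(K))\cong \II^\natural(K;\mathbb{C})$ via Corollary~\ref{TKh-THI*} is exactly what the paper does. The gap is in your inductive excision step. In the triple $(S^1\times S^2,\hat{K}^n\sqcup\mathcal{K}_2,u)$ the arc $u$ joins the two components of $\mathcal{K}_2$, so the boundary torus of a tubular neighborhood of any cable strand has \emph{zero} intersection with $u$. Theorem~\ref{Texcision} requires the excising tori to have odd pairing with the $1$-manifold $\omega$, so your proposed excision is not admissible as stated. You flag this as ``bookkeeping,'' but it is the actual content of the argument and cannot be arranged without changing the arc.

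The paper's fix is to avoid $u$ until the very end. From the proof of Proposition~\ref{A-T-I} one has $\II^\natural(K;\mathbb{C})\cong \II(S^1\times S^2,\hat{K}\sqcup\mathcal{K}_2,\omega|R)$ where $\omega$ is an arc joining $\hat{K}$ to a component of $\mathcal{K}_2$. Now $\partial N(\hat{K})$ \emph{does} meet $\omega$ once, so a single torus excision against $(S^1\times S^2,\mathcal{K}_{n+1},u)$ replaces $\hat{K}$ by its $n$-cable in one shot, giving $\II(S^1\times S^2,\hat{K}^n\sqcup\mathcal{K}_2,\omega|R)$. Only afterwards does one trade $\omega$ for $u$, using sphere excision (Theorem~\ref{Sexcision}) against a model piece carrying both arcs when $n$ is odd, and an extra torus-excision detour through $\mathcal{K}_3$ when $n$ is even. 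The arc-swap, not the cabling, is where the parity of $n$ enters; your induction-on-$n$ scheme never gets off the ground because it tries to excise with the wrong arc.
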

\begin{proof}
By Theorem \ref{TKh-THI}, we have a spectral sequence from $\Khr_n(\overline{K};\mathbb{C})$ to $\THI(T^n(K))$. It suffices to show that
\begin{equation*}
  \THI(T^n(K))\cong\II^\natural(K;\mathbb{C})
\end{equation*}
Recall that we defined a knot $\hat{K}$ in $S^1\times D$ in Section \ref{re-inst}. Let $\hat{K}^n$ be the $n$-cable of $\hat{K}$. Then we have
\begin{equation*}
  \THI(T^n(K))=\II(S^1\times S^2, \hat{K}^n\sqcup \mathcal{K}_2,u|R)
\end{equation*}
where $u$ is an arc joining the two components of $\mathcal{K}_2$ and $R$ is a $S^2$-slice.
Using Proposition \ref{A-T-I} and the discussion before it, we have
\begin{equation}\label{EEEEE}
 \II^\natural(K;\mathbb C)\cong \THI(T(K))=\II(S^1\times S^2, \hat{K}\sqcup \mathcal{K}_2,\omega|R)
\end{equation}
where $\omega$ is an arc joining $\hat{K}$ and one component of $\mathcal{K}_2$. Do excision to
\begin{equation*}
  (S^1\times S^2, \hat{K}\sqcup \mathcal{K}_2,\omega)\sqcup (S^1\times S^2, \mathcal{K}_{n+1},u)
\end{equation*}
along $\partial N(\hat{K})$ and $\partial N(\mathcal{K}_1)$, we obtain
\begin{equation*}
  (S^1\times S^2, \hat{K}^n\sqcup \mathcal{K}_2,\omega)\sqcup (S^1\times S^2,\mathcal{K}_2,u)
\end{equation*}
The isomorphism in Theorem \ref{Texcision} intertwines the sphere operator on the incoming end with the sphere operator on the ongoing end for the same reason
as in the proof of Proposition \ref{link-sum}, so we have
\begin{equation}\label{DDDDD}
  \II(S^1\times S^2, \hat{K}\sqcup \mathcal{K}_2,\omega|R)\cong \II(S^1\times S^2, \hat{K}^n\sqcup \mathcal{K}_2,\omega|R)
\end{equation}
If $n$ is odd, we can apply Theorem \ref{Sexcision} to
\begin{equation*}
  (S^1\times S^2, \hat{K}^n\sqcup \mathcal{K}_2,\omega)\sqcup (S^1\times S^2, \mathcal{K}_{n+2},\omega+ u)
\end{equation*}
to obtain
\begin{equation*}
  \II(S^1\times S^2, \hat{K}^n\sqcup \mathcal{K}_2,\omega|R)\cong \II(S^1\times S^2, \hat{K}^n\sqcup \mathcal{K}_2,u|R)
\end{equation*}
When $n$ is even, one can apply Theorem \ref{Texcision} and Theorem \ref{Sexcision} to show that
\begin{equation*}
  \II(S^1\times S^2, \hat{K}^n\sqcup \mathcal{K}_2,\omega|R)\cong\II(S^1\times S^2, \hat{K}^n\sqcup \mathcal{K}_3,\omega|R)\cong
  \II(S^1\times S^2, \hat{K}^n\sqcup \mathcal{K}_3,u|R)
\end{equation*}
Then apply Theorem \ref{Texcision} again to obtain
\begin{equation*}
  \II(S^1\times S^2, \hat{K}^n\sqcup \mathcal{K}_3,u|R)\cong \II(S^1\times S^2, \hat{K}^n\sqcup \mathcal{K}_2,u|R)
\end{equation*}
So we have
\begin{equation}\label{CCCCC}
   \II(S^1\times S^2, \hat{K}^n\sqcup \mathcal{K}_2,\omega|R)\cong \II(S^1\times S^2, \hat{K}^n\sqcup \mathcal{K}_2,u|R)
\end{equation}
for any $n$. Combine \eqref{EEEEE}, \eqref{DDDDD} and \eqref{CCCCC}, we have
\begin{equation*}
  \THI(T^n(K))\cong \II^\natural(K;\mathbb C)
\end{equation*}
\end{proof}

\bibliography{references}
\bibliographystyle{hplain}

\end{document}